\documentclass{article}
\usepackage[utf8]{inputenc}
\usepackage{authblk}
\usepackage{setspace}
\usepackage[margin=1.25in]{geometry}
\usepackage{graphicx}
\usepackage{caption}
\usepackage{amsmath}
\usepackage{amssymb}
\usepackage{amsthm}
\usepackage{bm}
\usepackage{hyperref}
\hypersetup{colorlinks=true,linkcolor=blue,citecolor=blue,urlcolor=blue}
\usepackage{booktabs}
\usepackage{array}
\usepackage{tabularx}
\usepackage{threeparttable}
\usepackage{longtable}
\usepackage{needspace}
\usepackage[section]{placeins}
\usepackage{enumitem}
\setlist{itemsep=3pt,parsep=1pt}
\usepackage{microtype}

\newtheorem{proposition}{Proposition}
\newtheorem{lemma}{Lemma}
\newtheorem{definition}{Definition}
\newtheorem{corollary}{Corollary}
\newtheorem{remark}{Remark}
\newtheorem{assumption}{Assumption}
\newcounter{algorithm}

\usepackage[numbers,sort&compress]{natbib}

\newcommand{\E}{\mathbb{E}}
\newcommand{\R}{\mathbb{R}}
\newcommand{\Mcal}{\mathcal{M}}
\newcommand{\Acal}{\mathcal{A}}
\newcommand{\Bcal}{\mathcal{B}}
\newcommand{\Fcal}{\mathcal{F}}
\newcommand{\fext}{\mathbf{f}^{\mathrm{ext}}}
\newcommand{\fbody}{\mathbf{f}^{\mathrm{body}}}

\title{Identifiable Uniqueness of Impulsively Forced, Inert, and Stabilized Body Trajectories}

\author[1]{Bo Pieter Johannes Andr\'ee\thanks{The findings, interpretations, and conclusions expressed herein are entirely those of the author and do not necessarily represent the views of the International Bank for Reconstruction and Development/World Bank, its Board of Executive Directors, or the governments they represent. No World Bank resources were used to conduct this research. This research received no specific funding. The author declares no competing interests. Contact: bandree(at)worldbank.org}}
\affil[1]{Data Group and Multilateral \& UN, World Bank, Geneva, Switzerland.}
\date{September 21, 2026}

\newcommand{\mcNrep}{1000}
\newcommand{\mcHorizon}{15}
\newcommand{\mcKhi}{0.85}
\newcommand{\mcKlo}{0.50}
\newcommand{\mcOutliers}{3--10}
\newcommand{\mcLambda}{0.85}
\newcommand{\mcLambdaLo}{0.50}
\newcommand{\mcGrad}{4}
\newcommand{\mcSdNu}{0.3}
\newcommand{\mcTailDf}{5}
\newcommand{\mcAlpha}{5}
\newcommand{\mcSnrRatioLo}{1}
\newcommand{\mcSnrRatioHi}{3}
\newcommand{\mcXsecN}{30}
\newcommand{\mcTshort}{60}
\newcommand{\mcSizeShort}{0.08}
\newcommand{\mcPowHiShort}{0.90}
\newcommand{\mcPowLoShort}{0.54}
\newcommand{\mcPowSnrOneShort}{0.29}
\newcommand{\mcPowSnrThreeShort}{0.16}
\newcommand{\mcLeakShort}{0.43}
\newcommand{\mcCleanShort}{0.07}
\newcommand{\mcKurtShort}{-1.1}
\newcommand{\mcKurtSEShort}{0.06}
\newcommand{\mcKurtPosShort}{0.25}
\newcommand{\mcKurtLamShort}{-0.1}
\newcommand{\mcWCShort}{+2.9}
\newcommand{\mcGradNaiveShort}{0.27}
\newcommand{\mcDetNaiveShort}{0.07}
\newcommand{\mcDetFullShort}{0.04}
\newcommand{\mcGradStdShort}{0.07}
\newcommand{\mcGradFitShort}{0.06}
\newcommand{\mcLeakRobShort}{0.11}
\newcommand{\mcDispSizeShort}{0.05}
\newcommand{\mcXsecSizeShort}{0.06}
\newcommand{\mcXsecPowHiOneShort}{0.97}
\newcommand{\mcXsecPowHiThreeShort}{0.42}
\newcommand{\mcXsecPowLoOneShort}{0.77}
\newcommand{\mcXsecPowLoThreeShort}{0.29}
\newcommand{\mcTmed}{120}
\newcommand{\mcSizeMed}{0.05}
\newcommand{\mcPowHiMed}{0.90}
\newcommand{\mcPowLoMed}{0.55}
\newcommand{\mcPowSnrOneMed}{0.31}
\newcommand{\mcPowSnrThreeMed}{0.14}
\newcommand{\mcLeakMed}{0.37}
\newcommand{\mcCleanMed}{0.05}
\newcommand{\mcKurtMed}{-1.7}
\newcommand{\mcKurtSEMed}{0.08}
\newcommand{\mcKurtPosMed}{0.16}
\newcommand{\mcKurtLamMed}{-0.2}
\newcommand{\mcWCMed}{+4.2}
\newcommand{\mcGradNaiveMed}{0.24}
\newcommand{\mcDetNaiveMed}{0.05}
\newcommand{\mcDetFullMed}{0.05}
\newcommand{\mcGradStdMed}{0.07}
\newcommand{\mcGradFitMed}{0.07}
\newcommand{\mcLeakRobMed}{0.09}
\newcommand{\mcDispSizeMed}{0.06}
\newcommand{\mcXsecSizeMed}{0.06}
\newcommand{\mcXsecPowHiOneMed}{1.00}
\newcommand{\mcXsecPowHiThreeMed}{0.64}
\newcommand{\mcXsecPowLoOneMed}{0.97}
\newcommand{\mcXsecPowLoThreeMed}{0.42}
\newcommand{\mcTlong}{250}
\newcommand{\mcSizeLong}{0.06}
\newcommand{\mcPowHiLong}{0.91}
\newcommand{\mcPowLoLong}{0.57}
\newcommand{\mcPowSnrOneLong}{0.28}
\newcommand{\mcPowSnrThreeLong}{0.13}
\newcommand{\mcLeakLong}{0.39}
\newcommand{\mcCleanLong}{0.05}
\newcommand{\mcKurtLong}{-2.4}
\newcommand{\mcKurtSELong}{0.10}
\newcommand{\mcKurtPosLong}{0.07}
\newcommand{\mcKurtLamLong}{-0.4}
\newcommand{\mcWCLong}{+4.8}
\newcommand{\mcGradNaiveLong}{0.17}
\newcommand{\mcDetNaiveLong}{0.06}
\newcommand{\mcDetFullLong}{0.06}
\newcommand{\mcGradStdLong}{0.06}
\newcommand{\mcGradFitLong}{0.04}
\newcommand{\mcLeakRobLong}{0.10}
\newcommand{\mcDispSizeLong}{0.06}
\newcommand{\mcXsecSizeLong}{0.06}
\newcommand{\mcXsecPowHiOneLong}{1.00}
\newcommand{\mcXsecPowHiThreeLong}{0.91}
\newcommand{\mcXsecPowLoOneLong}{1.00}
\newcommand{\mcXsecPowLoThreeLong}{0.72}
\newcommand{\mcGradNaiveRange}{0.17--0.27}

\newcommand{\figEta}{0.85}
\newcommand{\figKmax}{0.6}
\newcommand{\figTau}{10}
\newcommand{\figLambda}{0.7}
\newcommand{\chiRateAU}{2.0\times10^{-7}}
\newcommand{\chiDaysOneAU}{58}
\newcommand{\chiDaysOneFiveAU}{107}

\begin{document}
\maketitle

\begin{abstract}
An unusual object in the sky raises two related questions: what kind of object it is and what physical process drives its motion. Departures from a gravity-only trajectory can provide evidence, but the same apparent acceleration may be generated by distinct dynamical mechanisms. We formulate this as an identification problem and establish conditions for identifiable uniqueness among three reference classes: inert dynamics, impulsive forcing, and active stabilization such as a controlled spacecraft. The distinction rests on how each class responds to disturbances. Although different mechanisms can produce similar average trajectories, the specified reference models imply different stochastic responses around those trajectories. We show when these differences uniquely determine the dynamical class from the observed trajectory and when several classes remain observationally compatible.

\end{abstract}

\medskip
\noindent\textbf{Keywords:} identifiable uniqueness; set identification; trajectory classification; dynamical regime discrimination; non-gravitational acceleration; interstellar objects; active stabilization; impulsive forcing; excess-kurtosis diagnostics; resolvability margin.

\vfill
\pagebreak

\section{Introduction}
\label{sec:intro}

The discovery of an unusual object in the sky prompts efforts to determine what it is and what
drives its motion. Astronomers compare its observed trajectory with a gravitational prediction to
detect non-gravitational acceleration. Statistically significant departures have revealed
photometrically inactive objects known as dark comets
\citep{seligman2023darkcomets,seligman2024populations}. Other gravitational searches select
interstellar-meteor candidates by excess speed relative to solar escape \citep{cloete2026meteors}
or seek dark objects through their perturbations of detector test masses \citep{thoss2025dark}.
Several physical processes can explain a detected non-gravitational acceleration,
particularly over short observational arcs \citep{micheli2018oumuamua,jewitt2022interstellar};
solar radiation pressure on a low-density body is one such competing account of the
\mbox{`Oumuamua} anomaly \citep{bialy2018radiation}.
Maneuver-detection methods likewise infer departures from specified dynamics
\citep{willsky1976failure,holzinger2012control,pastor2022optical,porcelli2022radar}.
Classifying the object requires features of its motion that distinguish the competing
physical explanations.

The discussion surrounding 1998~KY$_{26}$ illustrates the distinction between anomaly detection
and physical identification. The object has been associated with the dark-comet population despite
the absence of detected dust \citep{santanaros2025ky26}. One analysis explains its astrometric record
through solar radiation pressure and thermal emission \citep{farnocchia2025ky26}; a preprint instead
constructs a two-impulse path connecting its orbital history to the lost Phobos~1 spacecraft
\citep{classify2606}. The planned Hayabusa2 rendezvous offers an opportunity for direct characterization
\citep{hirabayashi2021hayabusa2}. For objects accessible only through remote observations,
the question remains how far trajectory information can distinguish these physical explanations.

A trajectory residual is the difference between an object's position and a specified reference
path, such as a gravity-only orbit or a forecast based on earlier observations. Such departures
can leave the physical class unresolved for several reasons. First, distinct mechanisms
may generate similar residuals. A small residual can arise because the forcing
itself is weak, or because a larger disturbance is offset by corrective action. Second, the
informativeness of a diagnostic depends on the disturbance history. Changes in variance or
distributional shape between quiet and disturbed periods provide little separation when the
forcing environment changes too little over the observed arc. Third, there is an exact limiting
ambiguity: vanishing forcing and perfect cancellation produce the same zero residual under the
observation model considered here. Away from that limit, mechanisms with similar mean trajectories
can still differ in their stochastic responses to disturbances.

We derive conditions for distinguishing three reference regimes motivated by natural forcing
\citep{andree2026stability} and actuator behavior \citep{andree2026thruster}:
inert propagation without body-generated pulses or correction at the sampling scale (I),
impulsively forced motion (C), and stabilized motion (S).
Spacecraft guidance and disturbance-rejection systems provide the physical setting for corrective
action \citep{dennehy2023gnc,giulicchi2013lisa}.
Artificial objects make the distinction between inert propagation and stabilization concrete.
The rocket booster 2020~SO, recognized from its astrometric trajectory, and the Tesla Roadster
launched into solar orbit are inert ballistic bodies \citep{petrova2021so,rein2018roadster},
while an autonomously navigated spacecraft such as Deep~Space~1 exemplifies stabilized motion
\citep{bhaskaran2000ds1}.

The analysis compares the response of residual motion to disturbances rather than relying on the
mean trajectory alone. Errors in predicting the next observation supply one diagnostic. Their
stationarity---a stable mean and lag-covariance structure over time---is a shared regularity
condition for the inert and stabilized reference models. A separate restriction on scalar excess
kurtosis, a measure of distributional shape based on second and fourth moments, permits the
exclusion of impulsive forcing under the stated comparison laws. This first step can leave both
inert and stabilized motion compatible. To distinguish that pair, we compare the response with
the same forcing propagated without correction. Under the stated correction and noise conditions,
a smaller event-related variance increase or lower conditional excess kurtosis separates
stabilization from inert propagation. When these moment comparisons are uninformative, an isolated
disturbance supplies a complementary comparison. With background forcing and sensing noise held
fixed, a strengthening correction response reduces the conditional variance of the unpredictable
part of subsequent velocity increments, whereas the inert reference produces no corresponding
variance response.

A signature collects statistical features of the observed motion, such as variance, excess kurtosis,
and responses to disturbances. These signatures express the classical identification problem in
terms of physical regimes \citep{rothenberg1971identification}. Each regime generates a set of
signatures as its forcing, correction, and observation parameters vary. The physical class is
identifiable from these features when the true signature belongs to only one regime set, even if
several parameter values within that class produce it. For the nearest-signature classifier,
consistent signature estimation and a positive distance from the true signature to every competing
regime set give consistent recovery of the class. This gap supplies the separation condition in
standard extremum-estimator consistency arguments \citep[Theorem~2.1]{newey1994large}.
When several regimes generate the same signature, the selected features identify their compatible
class set. With a positive gap to excluded regimes, consistent estimation makes the selected class
belong to that set with probability tending to one. This permits nonuniqueness, as in consistency
arguments allowing a set of population maximizers \citep{andree2020theory}.

Finite measurement precision adds a second requirement. Suppose a calibrated error radius
bounds the distance between the estimated and true signatures, incorporating sampling variation,
instrumental uncertainty, and uncertainty in the reference comparisons. When the same radius is
used to determine compatibility with a regime set, a population separation larger than twice that
radius is sufficient to exclude every competing regime. One radius allows for error in estimating
the true signature; the other allows for the tolerance in matching the estimate to a candidate
regime. A separate residual-amplitude requirement excludes the observational limit in which
near-perfect correction is indistinguishable from vanishing forcing. Population separability
therefore concerns whether the regimes imply different observable signatures, while observational
resolvability concerns whether the available data distinguish them at the achieved precision.

Additional classes or finer subgroups enter through their signature sets; each addition changes
the rivals against which uniqueness must be established. The distance criterion provides a basis
for classification algorithms, illustrated here by a rule that retains every regime compatible
with the estimated signature at the calibrated tolerance. Numerical experiments illustrate the
diagnostic comparisons under specified forcing and observation laws. Unresolved contrasts guide
further trajectory measurements or independent observations of activity and composition that
could distinguish the remaining explanations.

\section{Theoretical framework}
\label{sec:framework}

A body's departure from a gravity-only path records the combined effects of disturbances and
any action that opposes them. A small departure can reflect weak forcing or effective correction;
a large one can persist after an impulse even when subsequent disturbances are being suppressed.
Distinguishing these mechanisms requires separating the inputs that change velocity, their
accumulation into displacement, and the errors introduced when that displacement is observed.
The target is the physical regime, so different forcing amplitudes or correction parameters may
remain compatible within an identified class. Appendix~\ref{app:notation} lists the recurring
notation.

\subsection{Identification of the physical regime}
\label{sec:inference}

An astronomer follows one object through repeated measurements of its position on the sky.
Optical astrometry records angular positions against a stellar reference frame; orbit determination
compares those measurements with predicted positions and accounts for their uncertainties
\citep{carpino2003errors,veres2017statistical}. The tracked object's motion may be inert (I), impulsively forced (C), or stabilized (S). The data are the measured
departures from a gravity-only reference, not direct observations of the forces or corrective commands.

Let $\mathbf y_t$ denote this observed residual at epoch $t$, expressed in the instrument's
coordinates, and let $\mathcal Y_T=(\mathbf y_1,\ldots,\mathbf y_T)$ be the record from $T$ epochs.
A classification rule assigns a regime $\widehat R_T\in\mathcal R=\{I,C,S\}$ to that record.
Within each candidate regime, $\theta\in\Theta_R$ describes the forcing and correction parameters
and any unknown quantities in the reference and measurement model. For the true regime $R_0$
and specification $\theta_0$, consistency means
\begin{equation}
\Pr_{R_0,\theta_0}(\widehat R_T=R_0)\longrightarrow1\qquad\text{as }T\to\infty.
\label{eq:class-consistency-target}
\end{equation}
This consistency target concerns the physical regime; individual forcing amplitudes and
correction parameters within that regime need not be uniquely identified.

\citet{rothenberg1971identification} defines identification through the probability laws of
observable data. In this setting, $P^{(T)}_{R,\theta}$ is the law of the possible astrometric records
under a candidate physical explanation, with the observing times, geometry, and instrument model
specified. If a rival regime $R\ne R_0$ admits a specification $\theta$ such that
\[
P^{(T)}_{R_0,\theta_0}=P^{(T)}_{R,\theta}
\qquad\text{for every }T,
\]
no classifier can consistently distinguish those two explanations. Every decision based on the
record has the same distribution under both. More observations cannot resolve an ambiguity that
persists in the entire observation law.

An extremum estimator compares candidate mechanisms and their nuisance parameters by maximizing
a criterion $Q_T(R,\theta)$ computed from the record. Its population counterpart $Q(R,\theta)$
is the limiting criterion under the actual data-generating process. The consistency argument in
\citet[Section~2.2, Assumption~6 and Theorem~7]{andree2020theory} combines uniform convergence
with identifiable uniqueness of the maximum; \citet[Theorem~2.1]{newey1994large} gives the
compact-parameter-space version. For regime identification, several maximizing nuisance
specifications may belong to the same physical class. Write
\[
\Theta=\bigcup_{R\in\mathcal R}\bigl(\{R\}\times\Theta_R\bigr),\qquad
(\widehat R_T,\widehat\theta_T)
\in\arg\max_{(R,\theta)\in\Theta}Q_T(R,\theta).
\]
Suppose $\Theta$ is compact, with a discrete metric on the regime index, $Q$ is continuous,
and $Q_T$ converges uniformly in probability to $Q$. Let $Q$ be maximized at the true
specification $(R_0,\theta_0)$, and define the set of population maximizers
\begin{equation}
\Theta^\star=\arg\max_{(R,\theta)\in\Theta}Q(R,\theta).
\label{eq:population-solution-set}
\end{equation}
Under these conditions the estimator's distance to $\Theta^\star$ converges to zero in probability:
a unique maximizer yields point consistency, and several maximizers yield convergence to their set.

For a correctly specified nonlinear spatial time-series model,
\citet[Section~4.3.3, Theorem~12]{andree2020theory} proves almost-sure convergence of the distance
from the maximum-likelihood estimate to the parameter set reproducing the true conditional
distribution. Several parameter values in an overspecified nonlinear model can represent the same
linear data-generating process. Convergence to a set of population maximizers can still identify one physical class
when all maximizing forcing and correction specifications belong to that class. Define
\begin{equation}
\mathcal R^\star=
\left\{R\in\mathcal R:
\exists\theta\in\Theta_R\text{ such that }(R,\theta)\in\Theta^\star\right\}.
\label{eq:identified-regime-set}
\end{equation}
If $\mathcal R^\star=\{R_0\}$, the positive population gap between $R_0$ and the other regime
components gives class consistency under the conditions above. If $\mathcal R^\star=\{I,S\}$,
the estimated class belongs to the inert--stabilized pair with probability tending to one;
the convergence result does not select a unique member.

The unresolved question is which features prevent one mechanism from reproducing another's
motion. Allowing arbitrary forcing and arbitrary correction would leave many such substitutions
possible; restrictions on their response to disturbances are therefore essential. Related
identification arguments for score-driven time-series models obtain a unique population-likelihood
maximum from restrictions on the recursion and innovation law \citep{blasques2022maximum}.
Here the required distinction is between physical classes. Rather than identify every coefficient,
we seek features of the measured trajectory that rival classes cannot reproduce. Let
$s(R,\theta)\in\mathbb R^m$ collect such features, including variance, excess kurtosis, and
responses to disturbances, after propagation into the measured coordinates. The signature set
\[
\mathcal S_R=\{s(R,\theta):\theta\in\Theta_R\}
\]
contains the population signatures that regime $R$ can generate. For the true signature $s_0$,
\begin{equation}
\mathcal R(s_0)=\{R\in\mathcal R:s_0\in\mathcal S_R\}
\label{eq:signature-class-set}
\end{equation}
is the set of physical regimes compatible with those features. A singleton identifies the regime;
$\mathcal R(s_0)=\{I,S\}$ leaves inert and stabilized motion unresolved.

An astronomer estimates these features from a finite, noisy record. If the signature estimate
has error at most $\varepsilon$ in a normalized metric, a distance greater than $2\varepsilon$
from $s_0$ to every rival signature set guarantees unique compatibility. One radius covers
estimation error and the other the tolerance for matching a candidate regime. A more precise
estimate permits classification once the chosen features separate the physical explanations, while
coarse measurements can leave all three regimes compatible.

Whether restrictions on forcing and correction yield observable separation depends on how
those inputs propagate into the measured residual.

\subsection{Physical and observation model}

Radiation pressure, thermal recoil, or a body-generated pulse changes an object's velocity;
subsequent motion accumulates that change into a displacement. Corrective thrust can offset part
of the input and hence part of its propagated displacement. The observed position records the
net effect, rather than the two contributions separately. Let $\mathbf x_t$ be the object's
position and $\mathbf x_t^{\mathrm{grav}}$ its gravity-only reference position in the same
spatial coordinates. In a linearized propagation model, write the accumulated displacement due
to forcing as $\mathbf s_t$ and the corrective offset as $\mathbf o_t$
\citep{andree2026stability}:
\begin{equation}
\mathbf{e}_t\equiv\mathbf{x}_t-\mathbf{x}^{\mathrm{grav}}_t
=\mathbf{s}_t-\mathbf{o}_t.
\label{eq:resid}
\end{equation}
All terms in equation~\eqref{eq:resid} have units of length: $\mathbf{s}_t$ and
$\mathbf{o}_t$ are propagated displacements, not instantaneous forces or accelerations.
The additive decomposition is valid while both displacements remain within the linearization horizon of
Appendix~\ref{app:longhorizon}. The observation model is
$\mathbf y_t=P_t\mathbf e_t+\mathbf n_t^{\mathrm{obs}}$, where $P_t$ maps spatial displacement
to the measured coordinates and $\mathbf n_t^{\mathrm{obs}}$ is measurement noise. For optical
astrometry, $P_t$ includes the sky-plane projection and the conversion from displacement to angle.
Thus the candidate forcing, correction, and measurement laws jointly determine
$P^{(T)}_{R,\theta}$. Moments of the residual alone cannot recover its two physical contributions.

The corrective offset $\mathbf o_t$ is produced by actuator commands.
Figure~\ref{fig:authority} illustrates the command geometry motivated by
\citet{andree2026thruster}, distinguishing balanced thrust, disturbance rejection, and directional
steering. The reachable set specifies which commands are available; a stochastic correction policy
specifies how they are used.

A balanced command can conceal substantial actuator activity in the net motion. More generally,
correction can offset a disturbance so closely that the remaining displacement approaches zero.
Weak forcing without correction approaches the same limit. Before using quiet motion to identify
an inert object, these two explanations must therefore be compared at the level of the measured
record. Lemma~\ref{lem:zeroresidual} shows that both limits leave only the common measurement
noise, and that exact cancellation and unforced propagation give identical observation laws.

\noindent\begin{minipage}{\textwidth}
\captionsetup{type=figure}
\centering\singlespacing
\includegraphics[width=.925\linewidth,height=.55\textheight,keepaspectratio]{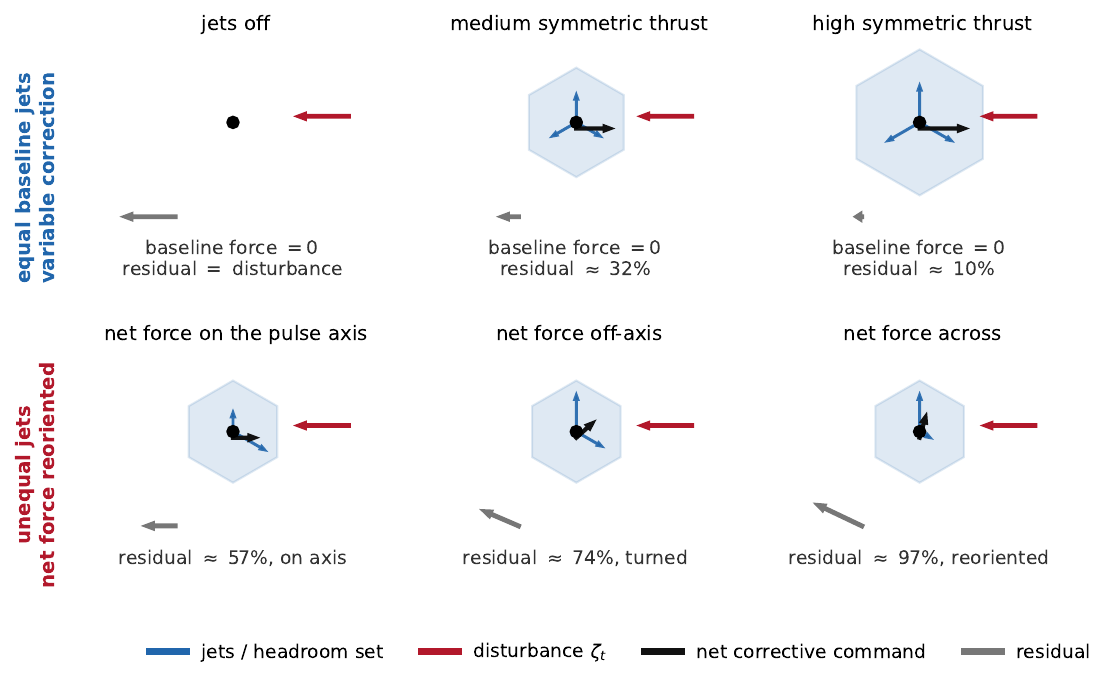}
\caption{\textbf{Actuator commands and reachable sets.} The commands generate the corrective offset in equation~\eqref{eq:resid} \citep{andree2026thruster}.
Equal baseline thrusts have zero net force. Modulation produces a net command; its mean depends on the
policy and disturbance law. The top row illustrates deviations around the baseline, while the bottom row
illustrates directional commands from individual thrust reductions. The independent-deviation and
constant-total-thrust constraints permit different sets of force commands, termed reachable sets
(Appendix~\ref{app:authority}).
Blue arrows denote thrusts, dark arrows corrective commands, pale arrows remaining disturbances, and red
arrows incoming disturbances. The jets-off panel has no actuator correction. Arrow lengths are schematic
and use a common illustrative scale.}
\label{fig:authority}
\end{minipage}
\par\medskip

\Needspace{5\baselineskip}
\begin{lemma}[Common zero-residual limit]
\label{lem:zeroresidual}
Fix a nonempty finite observation window $\mathcal I$ and square-integrable displacement records.
For such a record define
\[
\|\mathbf u\|_{\mathcal I}
=\left(\frac{1}{|\mathcal I|}\sum_{t\in\mathcal I}\E\|\mathbf u_t\|_2^2\right)^{1/2},
\]
with the expectation omitted for deterministic records. In equation~\eqref{eq:resid},
both of the following sequences converge to the same zero-residual limit:
\[
\begin{aligned}
\|\mathbf s^{(n)}-\mathbf o^{(n)}\|_{\mathcal I}\to0
&\quad\Longrightarrow\quad\|\mathbf e^{(n)}\|_{\mathcal I}\to0,\\
\mathbf o^{(n)}=0,\quad\|\mathbf s^{(n)}\|_{\mathcal I}\to0
&\quad\Longrightarrow\quad\|\mathbf e^{(n)}\|_{\mathcal I}\to0.
\end{aligned}
\]
If residual observations satisfy
$\mathbf y_t^{(n)}=P_t\mathbf e_t^{(n)}+\mathbf n^{\mathrm{obs}}_t$ with fixed bounded linear
observation maps $P_t$ and the same square-integrable measurement noise $\mathbf n^{\mathrm{obs}}_t$,
both sequences satisfy $\|\mathbf y^{(n)}-\mathbf n^{\mathrm{obs}}\|_{\mathcal I}\to0$.
Exact cancellation $\mathbf o=\mathbf s$ and unforced propagation
$\mathbf o=\mathbf s=0$ therefore have identical observation laws under this model.
\end{lemma}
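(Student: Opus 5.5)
The plan is to treat the lemma as a pair of elementary norm statements followed by a pushforward argument for the observation laws. First, I will check that $\|\cdot\|_{\mathcal I}$ is a seminorm on square-integrable records over the finite window. It is the square root of an average of the $L^2(\Pr)$ norms squared, that is, the Euclidean norm of the vector $\bigl(\E\|\mathbf u_t\|_2^2\bigr)^{1/2}_{t\in\mathcal I}$ scaled by $|\mathcal I|^{-1/2}$. Minkowski's inequality in $L^2(\Pr;\R^d)$ for each $t$, combined with the triangle inequality in $\R^{|\mathcal I|}$, then gives the triangle inequality. Finiteness follows from square-integrability and the fact that $\mathcal I$ is finite and nonempty.

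With the seminorm in hand, the two implications are immediate from equation~\eqref{eq:resid}. Since $\mathbf e^{(n)}=\mathbf s^{(n)}-\mathbf o^{(n)}$ identically, the first implication is the identity $\|\mathbf e^{(n)}\|_{\mathcal I}=\|\mathbf s^{(n)}-\mathbf o^{(n)}\|_{\mathcal I}$. In the second, $\mathbf o^{(n)}=0$ gives $\mathbf e^{(n)}=\mathbf s^{(n)}$, so again the norms coincide. For the observations, I will write $\mathbf y^{(n)}_t-\mathbf n^{\mathrm{obs}}_t=P_t\mathbf e^{(n)}_t$ and bound
\[
\E\|P_t\mathbf e^{(n)}_t\|_2^2\le\|P_t\|_{\mathrm{op}}^2\,\E\|\mathbf e^{(n)}_t\|_2^2 .
\]
Setting $C=\max_{t\in\mathcal I}\|P_t\|_{\mathrm{op}}$, which is finite because the window is finite and each $P_t$ is bounded, I get $\|\mathbf y^{(n)}-\mathbf n^{\mathrm{obs}}\|_{\mathcal I}\le C\|\mathbf e^{(n)}\|_{\mathcal I}\to0$ for both sequences.

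For the final claim about identical observation laws, I will argue pathwise rather than through limits. Exact cancellation $\mathbf o=\mathbf s$ gives $\mathbf e_t=0$ almost surely for every $t$, and unforced propagation gives the same. In both cases $\mathbf y_t=P_t\cdot0+\mathbf n^{\mathrm{obs}}_t=\mathbf n^{\mathrm{obs}}_t$ almost surely. The observation record is therefore the same measurable function of the same noise process, so its joint law over any window (and hence $P^{(T)}$ for every $T$) is the law of $(\mathbf n^{\mathrm{obs}}_t)_{t\le T}$ under both explanations. This requires that the noise law be specified identically under both, which is the "same measurement noise" hypothesis. Identical laws are exactly the Rothenberg non-identification condition from Section~\ref{sec:inference}.

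The only point needing care, and the one I expect to be the main obstacle, is interpretive rather than technical. I must make explicit that "same noise" means the same process, or at least the same law jointly with whatever else enters $\mathbf y$. Since $\mathbf e=0$ almost surely, nothing else enters, so equality of the noise laws suffices. I will also note that the convergence statements give only $L^2$ convergence of $\mathbf y^{(n)}$ to $\mathbf n^{\mathrm{obs}}$. This implies convergence in distribution of the finite-window records, so the two sequences have the same limiting law, namely that of the noise. That is consistent with, but not needed for, the exact-equality statement.
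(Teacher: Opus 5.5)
Your proposal is correct and follows essentially the same route as the paper: substitution into equation~\eqref{eq:resid} for the two implications, the finite uniform bound $\max_{t\in\mathcal I}\|P_t\|_{\mathrm{op}}$ giving $\|\mathbf y^{(n)}-\mathbf n^{\mathrm{obs}}\|_{\mathcal I}\le\max_{t\in\mathcal I}\|P_t\|_{\mathrm{op}}\,\|\mathbf e^{(n)}\|_{\mathcal I}$, and the observation that $\mathbf y=\mathbf n^{\mathrm{obs}}$ at either exact endpoint. Your seminorm check and the remark on convergence in distribution are harmless additions that the paper omits.
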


The proof is given in Appendix~\ref{proof:zeroresidual}.

Thus increasing precision cannot distinguish the two exact endpoints under this observation
model. Away from the limit, the residual can contain information about how correction responds
to disturbances, even when its mean amplitude is small. Opposing physical inputs can also cancel
before propagation, so small displacement alone remains insufficient. The useful comparison is
how the remaining motion changes when the disturbance history changes, with the measurement law
held fixed.

The distinction between smooth forcing and pulses depends on the time scale resolved by the
observations. Radiation pressure and secular thermal recoil contribute to a smooth mean input
\citep{burns1979radiation,vokrouhlicky2015yarkovsky}. Outgassing can likewise have a smooth
orbit-scale mean, represented by the Marsden law \citep{marsden1973nongrav}, with discrete
activity superimposed on it. Write $\mathbf{s}_t=\fext_t+\fbody_t$ for the propagated
contributions of the smooth-mean and pulsed channels. The superscript $\mathrm{ext}$ labels the
first channel rather than an exclusively external physical origin; modeled gravitational
perturbations remain in $\mathbf{x}^{\mathrm{grav}}_t$. A law for the mean force leaves open
how its unpredictable component varies, which is the part needed for the variance and shape
comparisons.

At each sampling step, part of the velocity change can be predicted from the preceding record,
and part remains uncertain. Let $\Fcal_t$ denote the information available through step $t$.
An input innovation is the velocity increment minus its conditional mean given
$\Fcal_{t-1}$. This separation distinguishes changing the intended motion from changing its
unpredictable variation. A predictable additive increment $\mathbf b_t$, measurable with
respect to $\Fcal_{t-1}$, changes one update's conditional mean while leaving its innovation
covariance unchanged when that innovation law is held fixed. Returning a displaced body to an
earlier path would require a policy governing later mean inputs as well. The local impulse
comparison holds those later mean inputs fixed across the two continuations, so that the retained
impulse and the response of subsequent variability can be examined separately.

A controller can reduce unpredictable motion by opposing the disturbance detected within a
sampling step. Sensing and executing that correction can introduce a new error, however, so
stronger cancellation need not simply rescale the original disturbance. Let $\bm\zeta_t$ be
the uncorrected velocity innovation and $\bm\nu_t$ the sensing-and-actuation noise, both in
velocity units. In the linear reference law, cancellation of a fraction $\lambda$ leaves the
uncanceled disturbance together with the corresponding correction error:
\begin{equation}
\mathbf{r}_t=(1-\lambda)\bm{\zeta}_t+\lambda\bm{\nu}_t,
\label{eq:withinstep}
\end{equation}
where the disturbance and noise have conditionally zero means and are conditionally independent
given $\Fcal_{t-1}$; the noise is memoryless across steps. The share $\lambda\in[0,1]$ is
constant over the compared windows. This law changes the relative contributions of disturbance
and correction noise, which can change distributional shape as well as variance.

The controller can also strengthen disturbance rejection after an event. Represent this second
action by a predictable recovery gain $K_t$ applied to the already-sensed remaining input
$\mathbf r_t$. Unlike the within-step mixture, this stage introduces no additional noise in
the reference law: it rescales both existing contributions. The final velocity innovation and
its conditional covariance are
\begin{equation}
\bm{\varepsilon}_t=(1-K_t)\mathbf{r}_t,\qquad
H_t=(1-K_t)^2\Sigma_t,\qquad
\Sigma_t=\operatorname{Var}(\mathbf{r}_t\mid\Fcal_{t-1}),\quad 0\le K_t\le K_{\max}<1.
\label{eq:authority}
\end{equation}
The covariance $\Sigma_t$ is measured before recovery scaling and generally differs from the
inert velocity-innovation covariance $g(W_t)$, where $W_t$ indexes the forcing environment.
Its quiet value is denoted by $\Sigma$. Raising the gain can reduce variance below this
same-history quiet baseline. Although $K_t$ is predictable, executing $K_t\mathbf r_t$ requires
access to the current remaining input. Additional execution noise changes the scaling law and
its conditional moments.

Equations~\eqref{eq:withinstep}--\eqref{eq:authority} are exact for the stated linear stochastic
model. A finite actuator clips commands outside its reachable set, and the clipped model generally
has different conditional moments. The sensing floor is the correction system's noise contribution.
A nondegenerate Gaussian law for this noise has unbounded support, so finite available corrective
thrust cannot implement these linear laws on every realization.
Appendix~\ref{app:authority} gives the realized-command constraint and separates this implementation
limit from the idealized moment identities. Zero conditional mean of the linear correction follows
from the specified noise laws; symmetry of an actuator's feasible set alone does not impose it.

\subsection{Disturbance dynamics and trajectory propagation}

An uncorrected body can experience a period of larger disturbances because its environment has
changed. A body-generated episode can also remain active after an initial pulse, so that recent
activity contains information about the size and direction of subsequent fluctuations. These
possibilities differ from a controller that suppresses later disturbances. To separate them, the
input model must distinguish variability associated with the environment from variability
associated with the object's own recent shocks.

At the sampling interval $\Delta t$, write the velocity-input innovation as
$\bm\varepsilon_t=\bm\eta_t=H_t^{1/2}\bm\xi_t$. Here $H_t$ is its covariance conditional on
the past, predictable and positive definite on the standardized subspace, while
$\E[\bm\xi_t\mid\Fcal_{t-1}]=0$ and
$\E[\bm\xi_t\bm\xi_t'\mid\Fcal_{t-1}]=I$. The matrix $H_t$ therefore describes the size
and directional dependence of the next unpredictable velocity change. A $2+\nu$ moment for
some $\nu>0$ is a separate regularity condition; the shape comparisons additionally require
fourth moments.

A BEKK-type recursion \citep{engle1995bekk} represents three sources of changing uncertainty:
the latest shock can alter subsequent activity, elevated variability can persist, and the
external environment can change the forcing even without either form of feedback. Write
\begin{equation}
H_t \;=\; C \;+\; \Acal\,\bm{\eta}_{t-1}\bm{\eta}_{t-1}'\,\Acal' \;+\; \Bcal\,H_{t-1}\,\Bcal' \;+\; G(W_t),
\label{eq:hrec}
\end{equation}
with $C\succ0$ and $G(W_t)\succeq0$. The baseline $C$ and environmental contribution $G(W_t)$
specify variability present without feedback from the object's own shocks. The term containing
$\Acal$ allows a recent velocity innovation to raise subsequent covariance, with the matrix
mapping its direction into later variability. The term containing $\Bcal$ carries preceding
covariance into the next step. Their quadratic form preserves a valid covariance matrix. For an
impulsive body channel, these terms can represent continuing or self-exciting activity;
asymmetric news-impact models allow other responses to recent shocks \citep{engle1993news}.
The chosen lags and quadratic response are statistical representations of such behavior,
rather than physical laws that every outgassing body must satisfy.

The inert comparison removes both forms of internal covariance feedback by setting
$\Acal=\Bcal=0$, leaving $H_t=g(W_t)=C+G(W_t)$. The environment can still be persistent or
time-varying. The restriction says that, once it is held fixed, a realized kick changes the
body's motion without itself changing the covariance of later forcing. That distinction permits
an impulse to displace an inert body while leaving its subsequent innovation variance unchanged.
Other positive covariance models depending only on the matched environment permit the same
comparison under the stated moment conditions. Finite second moments and strict stationarity
require separate hypotheses \citep{bougerol1992strict,engle1995bekk}. Stabilization acts on the
remaining disturbances through equation~\eqref{eq:authority}; its suppression law is therefore
specified separately from the BEKK recursion.

A velocity change and its positional effect have different persistence. Once a kick has changed
velocity, the displacement continues to accumulate even if no further kick occurs. On a short
horizon where differential gravitational curvature can be neglected, this is represented by
stacking the displacement and residual velocity as
$\mathbf z_t=(\mathbf e_t',\dot{\mathbf e}_t')'\in\R^6$. With the velocity kick applied at
the end of a sampling step,
$\mathbf z_{t+1}=\mathcal T\mathbf z_t+
(\mathbf0',(\mathbf b_{t+1}+\bm\varepsilon_{t+1})')'$, where
\begin{equation}
\mathcal T=\begin{pmatrix}I_3&\Delta t\,I_3\\0&I_3\end{pmatrix},
\qquad
\mathcal T^h=\begin{pmatrix}I_3&h\Delta t\,I_3\\0&I_3\end{pmatrix}.
\label{eq:jordan}
\end{equation}
The off-diagonal block accumulates velocity into position. Thus a retained impulse produces a
local displacement ramp, while any change in subsequent innovation variance concerns new inputs,
not removal of the initial offset. The predictable mean input $\mathbf b_{t+1}$ allows a
separately specified mean policy. The gravity gradient limits the horizon over which this
straight-line relative-motion approximation is valid.

Delayed forcing or correction can extend over several sampling steps. Finite-lag input
recursions represent that persistence, and error-correction models can represent adjustment among
velocity components. Under suitable root, driver, and moment conditions, a stable transition
matrix $\Mcal$ yields a stationary input process that forgets its initialization
(Appendix~\ref{app:framework}). Yet position still integrates those inputs. A body can therefore
have regular velocity innovations while its displacement from the original reference continues
to grow. Predicting the next measured position requires an observer that tracks velocity as well
as position; stationarity of its prediction error is a separate property of that object--observer
pair. Its covariance is obtained by propagating $H_t$ through the state dynamics, measurement
map, and observer, including measurement noise, rather than by equating it with $H_t$ or $g(W_t)$.

Cadence determines which of these inputs are resolved as individual innovations. At a coarser
cadence, a moving-average input may take the form
$\mathbf w_t=\sum_{j=0}^{q_{\mathrm{MA}}}\Theta_{j,t}\bm\eta_{t-j}$, with $\Theta_{0,t}=I$.
For positive lag order, part of this input is already represented in past information, so
$\mathbf w_t$ is not generally an innovation relative to $\Fcal_{t-1}$. The propositions use
$q_{\mathrm{MA}}=0$ at their stated cadence, keeping the covariance comparisons attached to the
unpredictable input at that scale.

\subsection{Stochastic assumptions and reference regimes}
\label{sec:regimes}

The physical decomposition alone permits all three regimes to produce similar displacement.
Separation requires restrictions on how their inputs vary and on what the observer retains of
that variation. Inert propagation has no corrective offset or pulsed body input; impulsive forcing
adds the latter; stabilization acts on the disturbances. Effects that are smooth at the diagnostic
cadence remain in the smooth-mean channel, irrespective of their physical origin. The assumptions
below specify the variability needed to distinguish these resolved mechanisms.

An accumulated displacement need not make the next observation increasingly difficult to
predict: a velocity-tracking observer can account for the retained drift. The relevant error is
therefore the departure from the next-position forecast, not the entire distance from the original
gravity-only path. Let $\mathbf q_t$ denote the centered one-step prediction error
$\mathbf y_t-\hat{\mathbf y}_{t\mid t-1}$, or the centered output of a specified transformation
of that error, for a fixed observer. Comparing its statistical behavior across a diagnostic
window requires a stable error law after the observer's initialization transient. Write
$\Gamma(0)=\E[\mathbf q_t\mathbf q_t']$ for its contemporaneous covariance.
Assumption~\ref{ass:A1} imposes the stationary law for the inert reference while retaining an
unpredictable component.

\Needspace{5\baselineskip}
\begin{assumption}[Stationary forecast residual in the inert limit]
\label{ass:A1}
After the fixed velocity-tracking filter has converged, the centered forecast-residual process
used for the regularity diagnostic is indexed on $\mathbb Z$, covariance stationary, and has covariance
$\Gamma(0)\succ0$. It is purely non-deterministic: the intersection of its closed linear past
spaces is $\{0\}$. If the raw residual is heteroskedastic, the diagnostic is applied to a stated transformed
process, and these properties are assumed for that process itself. Standardization by an input
covariance $g(W_t)$ does not by itself establish them for the filter output.
The transient before convergence is excluded from the diagnostic window.
\end{assumption}

A converged observer provides a consistent way to remove predictable motion. Riccati convergence
under appropriate detectability, stabilizability, and stationary-noise hypotheses motivates such
a filter \citep{anderson1979optimal,harvey1989forecasting}, while the stated error properties
remain assumptions about the joint forcing and observation model. Pure non-determinism rules out
a nonzero component that is perfectly reconstructible from arbitrarily remote observations.
This permits a strict bound on linear predictability without requiring a particular distribution.
It also leaves open the physical source of the regular errors: weak uncorrected forcing and
corrected forcing can both satisfy these conditions.

An isolated impulse provides a useful distinction only if its kinematic effect can be separated
from changes in the forcing that follows it. For the inert reference, the impulse changes
velocity, but later uncertainty is still governed by the same environment. This requires the
no-own-shock-feedback restriction described above. Radiation pressure's mean $r^{-2}$ dependence
alone supplies no law for its stochastic variance. Assumption~\ref{ass:A2} therefore specifies
that variance separately and fixes the environmental comparison needed to attribute a later
variance response to something other than changing background forcing.

\Needspace{5\baselineskip}
\begin{assumption}[Inert innovation: exogenous, no own-shock feedback]
\label{ass:A2}
The centered inert velocity innovation has covariance
$H_t=g(W_t)=C+G(W_t)$, with $C\succ0$ and $G(W_t)\succeq0$.
It has no own-shock covariance feedback after conditioning on the environment: in
equation~\eqref{eq:hrec}, $\Acal=\Bcal=0$.
The index $W_t$ is $\Fcal_{t-1}$-measurable; predictability alone does not imply causal
exogeneity. An impulse-response comparison holds the environment fixed.
Testing this restriction requires an identified covariance model and a justified null calibration
appropriate to its boundary structure \citep{andrews2001boundary}.
\end{assumption}

Variance describes the typical squared size of an unpredictable kick; the fourth moment is more
sensitive to occasional large kicks. An intermittent input can therefore have high excess
kurtosis even when each pulse has bounded amplitude. To use that shape as evidence of an
additional body channel, however, both the frequency and amplitude distribution of its pulses
must be specified. A Bernoulli gate represents whether a pulse occurs, with a separate variable
for its amplitude. Remark~\ref{rem:kurtosis} gives the resulting shape measure and explains
why arrival frequency alone cannot order two channels.

\Needspace{5\baselineskip}
\begin{remark}[Sparsity and excess kurtosis]
\label{rem:kurtosis}
A Bernoulli gate gives a tractable model of intermittent activity, separating the frequency of
pulses from their amplitudes. Let $X=BJ$, where $B\sim\operatorname{Bernoulli}(p)$ is independent of a centered jump $J$,
$0<p\le1$, and $0<\E J^2<\infty$, $\E J^4<\infty$. Then
\[
\kappa_X=\frac{p\E J^4}{(p\E J^2)^2}-3=\frac{m}{p}-3,
\qquad m=\frac{\E J^4}{(\E J^2)^2}\ge1.
\]
Reducing $p$ increases excess kurtosis when $m$ is fixed. Comparing two different channels
requires comparing $m/p$, not arrival rates alone. Bounded jumps can produce arbitrarily large
positive excess kurtosis as $p\downarrow0$, so leptokurtosis does not establish heavy tails.
\end{remark}

For a fixed amplitude law, rarer pulses make the fourth moment large relative to squared
variance. Comparing body activity with external fluctuations additionally requires their amplitude
laws to be comparable in this standardized sense. This is a premise about unpredictable inputs,
not a consequence of smooth mean forcing. A few dominant sample pulses do not establish finite
population fourth moments or a tail class. The shape comparison is made only in laws with
positive variance and finite fourth moment, and uses the following cross-channel restriction.

\noindent\emph{Standing condition (cross-regime kurtosis ordering).}
When comparing (C) with (I), the external component of (C) has the same standardized law as the
inert channel in the same conditioning set. Assume $\kappa_I\ge0$ and $\kappa_b\le\kappa_I$
for the independent body channel. For Bernoulli-thinned channels the latter condition is
$m_b/p_b\le m_I/p_I$. These are additional restrictions; $p_I<p_b$ alone is insufficient.
This ordering selects a body channel no more leptokurtic than the matched external channel.
Independence makes fourth cumulants additive, so the standardized shape change can be traced to
variance shares. The comparison $\kappa_C\le\kappa_I$ applies to any component distributions
satisfying these conditions, rather than requiring Bernoulli-gated pulses.

For the variance comparison, elevated-forcing windows are selected so that
$\E[\operatorname{tr}g(W_t)\mid\mathrm{event}]>
\E[\operatorname{tr}g(W_t)\mid\mathrm{quiet}]$ whenever a positive inert event increment is
required. This window-level condition is additional to the exogenous covariance specification.

For a corrected body, the initial displacement and the response to later disturbances need not
recover together. Part of an incident impulse can remain in the velocity even while the controller
strengthens its rejection of new fluctuations. To examine these effects separately, the reference
comparison fixes the retained initial impulse, allows the recovery gain to rise, and holds the
subsequent mean inputs equal across the two continuations. Within-step cancellation adds a second
comparison between the disturbance it removes and the sensing noise it introduces.
Assumption~\ref{ass:A3} specifies these actions, fixes the noise law over the compared windows,
and separately maintains the observer's forecast-error regularity.

\Needspace{5\baselineskip}
\begin{assumption}[Linear stochastic correction law]
\label{ass:A3}
In the stabilized reference model:
\begin{enumerate}[label=\textbf{(\alph*)},leftmargin=*,itemsep=0.25em]
\item \emph{Recovery gain.} Within an isolated response episode, with $h$ counting sampling steps
since the impulse, $K_t=K(h)$ is predictable,
$K(0)=0$, nondecreasing and bounded by $K_{\max}\in(0,1)$. Its idealized continuation tends to
$K_{\max}$. The quiet comparison has $K=0$. For the explicitly identified constant-gain
variant, the same fixed $K\in[0,1)$ applies in both quiet and event windows instead.
\item \emph{Impulse and mean correction.} Predictable additive corrections change one-update
conditional means at a fixed innovation law. A separate test intervention supplies an exogenous
velocity impulse whose onset-retained value is $\eta\bm\delta$, with fixed $\eta\in(0,1]$.
This collinear retention is an additional premise, not a consequence of the stochastic disturbance law.
No subsequent impulse-induced mean steering acts during the local response horizon.
The fully canceled case $\eta=0$ has no ramp and is excluded from that calculation.
\item \emph{Filter regularity.} The centered forecast-residual process satisfies all the covariance,
stationarity and pure-non-determinism conditions of Assumption~\ref{ass:A1} under (S) as well.
This is maintained, not deduced from the gain schedule or actuator symmetry.
\item \emph{Moment law.} Equations~\eqref{eq:withinstep}--\eqref{eq:authority} hold as linear
stochastic laws. Given $\Fcal_{t-1}$, $\bm\zeta_t$ and $\bm\nu_t$ are centered and independent,
with finite covariance. The share $\lambda$ is constant over compared windows; the noise
covariance $F_\nu$ is constant there. The quiet covariance before recovery scaling is $\Sigma\succ0$.
Where fourth moments are used they are finite with positive projected variances. A Gaussian
floor or positive disturbance excess kurtosis is imposed explicitly by the particular result,
not inferred from the term ``near-Gaussian.'' For an event-level variance reduction with
$\lambda>0$, additionally require
$\lambda\operatorname{tr}F_\nu<(2-\lambda)
\operatorname{tr}\operatorname{Var}(\bm\zeta_t\mid\Fcal_{t-1})$.
\end{enumerate}
Finite hard saturation is a separate implementation, discussed in Appendix~\ref{app:authority};
its moments must be recomputed. Exact Gaussian identities do not assert globally feasible
linear cancellation by a finite actuator.
\end{assumption}

These conditions allow a displaced mean to continue along its local ramp while the unpredictable
part of later velocity increments becomes smaller. The effect of the retained fraction $\eta$
is therefore different from that of the recovery gain $K(h)$. The within-step share $\lambda$
also changes the mixture of physical disturbance and correction noise. Clause~(d)'s optional
trace bound requires the removed disturbance variance to outweigh the added sensing variance;
without it, correction can attenuate an event increment without lowering every variance level.
Coordinatewise attenuation requires the corresponding directional inequalities. The episode-level
response remains distinct from the across-episode forecast-error regularity in clause~(c),
which depends on the joint inputs, gain schedule, and initialization.

Physical feasibility also depends on mass, command duration, and direction
(Appendix~\ref{app:authority}); the gain law alone does not determine the retained fraction
$\eta$.

Definition~\ref{def:regimes} combines these stochastic laws with the presence or absence of body
pulses and correction. It classifies the resolved dynamics rather than assigning an origin from
appearance.

\Needspace{5\baselineskip}
\begin{definition}[Reference residual structures]
\label{def:regimes}
\textbf{(I)} \emph{Inert limit}: $\mathbf o_t=0$ and $\fbody_t=0$, so
$\mathbf e_t=\fext_t$ is the displacement contribution of the smooth-mean input channel, with
no body-generated pulse or corrective offset. Its centered velocity innovations follow
Assumption~\ref{ass:A2}. The split into smooth and pulsed channels is defined at the declared
sampling cadence, so the inert label is a cadence-relative operational class: a slow modulation
resolved only at a finer cadence would enter the impulsive channel there.
\textbf{(C)} \emph{Impulsively forced limit}: $\mathbf o_t=0$ and
$\mathbf e_t=\fext_t+\fbody_t$, with nonzero pulsed body inputs. At the innovation level,
$\bm\varepsilon_t=\bm\zeta^e_t+\bm\zeta^b_t$. Conditional independence, finite moments,
and positive event amplification are imposed where a result uses them.
\textbf{(S)} \emph{Stabilized reference limit}: $\mathbf o_t\not\equiv0$ and
$\mathbf e_t=\mathbf s_t-\mathbf o_t$, with the input law of Assumption~\ref{ass:A3}.
Forecast-residual regularity is a separate premise; neither stationarity of $\mathbf e_t$ nor
long-horizon return follows from a stable input-adjustment block alone.
\end{definition}

Intermediate cases combine the components of the residual equation. Body-generated effects
that are smooth at the diagnostic scale are grouped with the inert model's smooth-mean input;
bursty effects enter the impulsively forced channel. Jets resemble the stabilized reference
model when they correct disturbances. Their geometry alone supplies no such response law.

\subsection{Diagnostic targets for regime separation}
\label{sec:diagnostics}

An inert body follows the motion produced by its forcing; an impulsively forced body adds
pulses of its own; a stabilizer offsets part of the disturbance. The four diagnostics, O1--O4,
ask how these differences appear in a body's ability to maintain a predictable course and
respond to changing conditions.

\emph{O1, regular prediction errors.} A spacecraft assigned to follow a course must keep its
departures within an acceptable range. It estimates its state and adjusts its motion, making
prediction part of the control task. O1 concerns the errors in the analyst's forecast of the
next position: these can remain regular even after the body has shifted from its earlier
course. Proposition~\ref{prop:o1} gives the stable lag-one projection for the stationary
reference. Uncorrected motion can also produce regular prediction errors, so O1 provides a
baseline shared by the candidate mechanisms.

For the control dynamics, requiring an amplification factor below one at every update can be
relaxed to contraction on average. Errors may grow over some intervals, with factors above one,
provided contraction dominates over time under the Bougerol-type expected log-contraction conditions in
Appendix~\ref{app:stability} and Remark~\ref{rem:frequency}. During those intervals, the
spacecraft must still stay close enough to its intended course. Remark~\ref{rem:o1phys} expresses this separate requirement as a
bound on departures over the planning horizon.

\emph{O2 and O3, responding to stronger disturbances.} When forcing intensifies, an uncorrected
body receives the additional disturbance without acting against it. A stabilizer can oppose
part of it, as a pilot or control system responds when an aircraft enters stronger turbulence.
O2 asks how much of the increased variability remains in the motion. The comparison holds the
forcing exposure fixed: the reference is the same body under the same disturbance with
correction switched off. It is this comparison, rather than quiet motion alone, that identifies
attenuation under the conditions of Proposition~\ref{prop:o2o3}.

O3 asks whether unusually large increments become less prominent relative to the remaining
fluctuations. This complements the variance comparison: shrinking every increment by the same
fraction lowers variance but leaves its relative shape unchanged. Mixing the remaining
disturbance with sensing noise can also reduce the prominence of large increments, as the
kurtosis-share identity and conditional-shape result establish
(Lemma~\ref{lem:kurtshare}; Proposition~\ref{prop:o3inv}). Added body pulses can change shape
too. Variance and shape therefore have to be read together against the specified uncorrected
benchmark: suppression may appear as a smaller variance increment, a change in relative shape,
or both.

\emph{O4, the course after an impulse.} A single retained kick changes an inert body's
velocity, so its displacement from the earlier course initially grows along a smooth ramp.
Gravity subsequently curves the path and can bring it closer to the reference in some orbital
configurations (Appendix~\ref{app:longhorizon}). Further body-generated pulses can prolong or
redirect the response, although weak or infrequent activity can still look smooth at the
observed cadence (Remark~\ref{rem:cometary}). Smoothness alone therefore leaves several
physical explanations compatible.

A course-tracking controller has two tasks after a large displacement: establish the course to
follow and limit the effect of new disturbances along it. O4 separates these tasks. In the
rising-gain reference, the retained impulse leaves a smooth local displacement (O4a), while
stronger correction reduces subsequent velocity fluctuations (O4b). Proposition~\ref{prop:o4}
formalizes this combination. Returning to the old course requires additional steering; the
controller could instead adopt the shifted course (Remark~\ref{rem:adoption}). A controller
already operating at a steady gain can show no further variance reduction after the impulse.
Its correction is assessed through the matched comparisons of O2--O3 or a variance-level
comparison (Remark~\ref{rem:smodes}).

Each candidate must account for both the observed course and the fluctuations under the same
forcing and observing conditions. Table~\ref{tab:diagnostics} connects these physical
comparisons to their statistical roles.

The course tolerance and the statistical regularity concern different errors: one measures
departure from the intended path, the other error in predicting the next observation.
Remark~\ref{rem:o1phys} states the operational criterion explicitly.

\Needspace{5\baselineskip}
\begin{remark}[Statistical stationarity and physical tolerance]
	\label{rem:o1phys}
	Fix an observed coordinate or proxy $u_t$, an observation window $\mathcal W$, a prescribed
	physical tolerance $L>0$, and, for a probabilistic criterion, a tail level $\alpha\in(0,1)$.
	A realized path meets the tolerance when $\max_{t\in\mathcal W}\|u_t\|\le L$; a model meets
	the corresponding probabilistic requirement when
	$\Pr(\max_{t\in\mathcal W}\|u_t\|>L)\le\alpha$.
	These are finite-window physical criteria. Covariance stationarity instead means finite,
	time-invariant means and lag covariances; it implies neither criterion for a given $L$.
	Conversely, satisfying a tolerance on one observed window does not establish stationarity.
\end{remark}

\par\medskip
\noindent\begin{minipage}{\textwidth}
\captionsetup{type=table}
\centering\small\singlespacing
\setlength{\abovecaptionskip}{4pt}
\caption{\textbf{The four diagnostics and their roles.} When the independent-channel
comparison gives a strict kurtosis gap, a match to the inert shape benchmark or a strict
twin-suppression contrast excludes impulsive forcing. Suppression contrasts distinguish
stabilized from inert motion; forecast-residual regularity is maintained.}
\label{tab:diagnostics}
\begin{tabularx}{\textwidth}{@{}l >{\raggedright\arraybackslash}p{4.6cm} >{\raggedright\arraybackslash}X@{}}
\toprule
Diagnostic & What it compares & Role in identification \\
\midrule
O1 forecast-residual regularity & regularity of the one-step prediction errors & Shared statistical premise; can also hold under impulsive forcing \\
O2 disturbance attenuation & event-versus-quiet variance against an uncorrected twin & Strict attenuation excludes both uncorrected regimes under the comparison law \\
O3 excess-kurtosis reduction & scalar standardized fourth cumulant of the errors & The cross-regime shape gap separates impulsive forcing from inert motion; twin-shape suppression detects correction \\
O4 impulse response & level (O4a) and variance (O4b) response to an isolated disturbance & The shared local ramp and the rising-gain variance response jointly distinguish the stated inert and stabilized models \\
\bottomrule
\end{tabularx}
\end{minipage}
\par\medskip

\section{Results}
\label{sec:results}

Regular prediction errors leave all three mechanisms possible. The required distinction is
how each mechanism changes the effect of the same disturbances, with the observing process
held fixed.

\subsection{Forecast-residual stationarity}
\label{sec:o1}

A retained velocity offset generates a predictable displacement ramp. Once that velocity is
tracked, the remaining one-step errors can be regular under either inert or corrected motion.
O1 uses a fixed observer containing position and velocity but no body-acceleration state; its
noise specification, initialization, and diagnostic transformation are also fixed. The question
is what the assumed regularity implies for predictability, before it is interpreted as evidence
about a physical mechanism.

For the centered stationary residual $\mathbf q_t$, the population lag-one projection
$\Phi=\Gamma(1)\Gamma(0)^{-1}$, with
$\Gamma(j)=\E[\mathbf q_t\mathbf q_{t-j}']$, gives the best linear prediction from the preceding
residual. Stationarity bounds the variance that this prediction can contain. Pure non-determinism
additionally excludes a nonzero component predicted without error from arbitrarily remote
observations. Proposition~\ref{prop:o1} uses these two properties to obtain a strict spectral
bound shared by the inert and stabilized references.

\Needspace{5\baselineskip}
\begin{proposition}[Lag-one projection of the inert and stabilized limits]
\label{prop:o1}
For the centered forecast-residual process specified in Assumption~\ref{ass:A1}, and in
Assumption~\ref{ass:A3}(c) for (S), the population lag-one operator satisfies $\rho(\Phi)<1$.
This is a shared property of (I) and (S). The proposition imposes no separation condition
on the impulsively forced class (C).
Neither a deterministic physical-tolerance bound nor a particular finite-sample test outcome
follows from this proposition.
\end{proposition}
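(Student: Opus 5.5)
We prove $\rho(\Phi)<1$ for the centered, covariance-stationary, purely non-deterministic process $\mathbf q_t$ with $\Gamma(0)\succ0$. The first step is a weak bound, $\rho(\Phi)\le1$, from stationarity alone. The second is a separate argument that a unit-modulus eigenvalue would produce a deterministic component, which pure non-determinism excludes.

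For the first step, write $\mathbf q_t=\Phi\mathbf q_{t-1}+\mathbf u_t$, where $\mathbf u_t$ is the lag-one projection error. It is uncorrelated with $\mathbf q_{t-1}$ by the normal equations $\Gamma(1)=\Phi\Gamma(0)$. Stationarity then gives the Lyapunov-type identity
\[
\Gamma(0)=\Phi\Gamma(0)\Phi^{*}+\Sigma_u,\qquad \Sigma_u=\E[\mathbf u_t\mathbf u_t^{*}]\succeq0 .
\]
Let $v$ be a left eigenvector of $\Phi$, so $v^{*}\Phi=\mu v^{*}$. Then
\[
v^{*}\Gamma(0)v=|\mu|^2 v^{*}\Gamma(0)v+v^{*}\Sigma_u v .
\]
Since $v^{*}\Gamma(0)v>0$, this forces $|\mu|\le1$, with equality only if $v^{*}\Sigma_u v=0$. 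Complex eigenvectors need no special handling: the identity holds with conjugate transposes because $\Gamma(0)$ and $\Sigma_u$ are real symmetric.

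The second step excludes $|\mu|=1$, and I expect it to be the main obstacle. If $|\mu|=1$, then $v^{*}\mathbf u_t=0$ almost surely, so the scalar process $x_t=v^{*}\mathbf q_t$ satisfies $x_t=\mu x_{t-1}$ exactly. Iterating gives $x_t=\mu^{k}x_{t-k}$ for every $k\ge1$. Hence $x_t$ lies in the closed linear span of $\{\mathbf q_s:s\le t-k\}$ for every $k$, and therefore in the intersection of these past spaces. Under Assumption~\ref{ass:A1} that intersection is $\{0\}$, so $x_t=0$ almost surely. This gives
\[
\E|x_t|^2=v^{*}\Gamma(0)v=0,
\]
contradicting $\Gamma(0)\succ0$ and $v\ne0$. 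The care needed here is to work in the complexified Hilbert space generated by the components of $\mathbf q$, where the remote past is the intersection over $k$ of the closed spans of the past observations. Scalar combinations with complex coefficients $v$ remain in these spans, so the intersection argument goes through unchanged. Thus $\rho(\Phi)<1$ for (I).

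For (S), the same argument applies verbatim, because Assumption~\ref{ass:A3}(c) imposes exactly the covariance, stationarity and pure-non-determinism conditions of Assumption~\ref{ass:A1} on the stabilized forecast residual. Nothing in the argument uses the regime's forcing or correction law, which is why the property is shared by (I) and (S) and carries no separating information. The remaining clauses are remarks rather than claims requiring proof. No assumption is placed on (C), so nothing is asserted for it. Per Remark~\ref{rem:o1phys}, a spectral bound on a population projection implies neither a pathwise tolerance bound nor any finite-sample test outcome.
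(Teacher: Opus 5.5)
Your proposal is correct and follows the paper's proof essentially step for step. The normal equations give the Lyapunov identity and hence $|\mu|\le 1$, and a unit-modulus eigenvalue is excluded because $v^*\mathbf q_t=\mu^k v^*\mathbf q_{t-k}$ would lie in every past space, which pure non-determinism makes trivial. The only difference is cosmetic: the paper uses an eigenvector of $\Phi'$ and passes explicitly to real and imaginary parts in the real past spaces, which is exactly the one-line justification behind your claim that the complexified intersection is trivial.
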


The proof is given in Appendix~\ref{proof:o1}.

The bound restricts linear predictability rather than describing a physical return force.
The projection error can retain serial dependence, and stationary long-memory responses can
decay more slowly than $\Phi^h$. These possibilities remain compatible with the strict bound.

Empirical tests of regularity address narrower hypotheses. A componentwise augmented
Dickey--Fuller test addresses a scalar root at $+1$ \citep{dickey1979unitroot}; a stationarity
test uses a stationary null \citep{kwiatkowski1992kpss}. Neither directly tests the entire
multivariate boundary $\rho(\Phi)=1$, which also includes $-1$ and complex unit-modulus
eigenvalues. Their outcomes require calibration for the specified observer and error law.
A nonstationary fractional model with $d\in(1/2,1)$ falls outside the proposition, motivating
memory estimation across stationary and nonstationary regions \citep{shimotsu2005whittle}.
Filter tuning, cadence, and memory specification therefore affect whether the observed record
supports the maintained regularity premise.

An uncorrected body can be easy to predict while drifting from its earlier course.
Proposition~\ref{prop:o1} therefore establishes a shared prediction property, not evidence of
active course-holding. A controller can also tolerate temporary error growth when later
contraction compensates for it; Appendix~\ref{app:stability} gives the recurrence conditions.
The intervening departures must still meet the planner's tolerance
(Remark~\ref{rem:o1phys}). Predicting the motion and keeping it near a chosen course answer
different questions. To identify correction, the next comparison asks whether less disturbance
reaches the motion than the same forcing would produce without control.

\subsection{Stabilization and disturbance variance}
\label{sec:o2o3}

Low variability alone leaves weak forcing and strong correction unresolved. The distinction
becomes testable when the forcing exposure is fixed: how much would the same object fluctuate
if its disturbances were propagated without correction? Define its \emph{unstabilized twin}
by setting $\mathbf{o}_t\equiv\mathbf{0}$ while retaining the same forcing. For a body with
pulsed inputs, those inputs remain in the twin. This comparison attributes attenuation to
correction rather than to a different disturbance law. An empirical approximation constructed
from other bodies or simulation additionally needs matched physical coupling and observation
conditions.

A period of stronger forcing provides an increment against which correction can be measured.
Write
$\Delta H=\E[\operatorname{tr}H_t\mid\mathrm{event}]
-\E[\operatorname{tr}H_t\mid\mathrm{quiet}]$.
This averages the uncertainty remaining at each history before comparing event and quiet
windows; pooling all observations can also mix different conditional means and scales.
The shape comparison must likewise specify a conditional law, an average of conditional
kurtoses, or a pooled-window law. Superscript $\mathrm{pass}$ denotes the same forcing
propagated without correction. Since each uncorrected regime is its own twin, its twin contrast
is zero; the question is whether correction produces a strict departure from that equality.

The shape effect is less immediate than the variance effect. Cancellation reduces the disturbance,
but the correction system introduces its own noise; a body-generated pulse instead adds another
forcing component. In both cases, the observed increment combines channels, so the relevant
question is how their relative contributions determine standardized shape. For independent
channels, fourth cumulants add while standardization divides by the square of total variance.
Lemma~\ref{lem:kurtshare} expresses the result through variance shares, allowing the same
calculation to compare added body activity and disturbance cancellation.

\Needspace{5\baselineskip}
\begin{lemma}[Kurtosis share identity]
\label{lem:kurtshare}
Fix a coordinate or nonzero deterministic linear projection of the two vector channels, and write
the resulting centered scalar variables as $Z,N$. In the probability law under consideration
(unconditional, or conditional on a specified history), assume independence, positive variances
$\sigma_Z^2,\sigma_N^2$ and finite fourth moments. For a fixed $\lambda\in[0,1]$, let
$R=(1-\lambda)Z+\lambda N$. Then
\[
\kappa_R=w^2\kappa_Z+(1-w)^2\kappa_N,\qquad
w=\frac{(1-\lambda)^2\sigma_Z^2}{(1-\lambda)^2\sigma_Z^2+\lambda^2\sigma_N^2}.
\]
This is a scalar identity. It does not give the kurtosis of a vector norm, nor does conditional
independence imply independence in a pooled window.
\end{lemma}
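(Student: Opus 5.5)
The plan is a direct cumulant computation under the fixed law, whether unconditional or conditional on a specified history. Excess kurtosis of a centered scalar $X$ with positive variance is the standardized fourth cumulant, $\kappa_X=c_4(X)/c_2(X)^2$, where $c_2(X)=\E X^2$ and $c_4(X)=\E X^4-3(\E X^2)^2$. The argument uses three properties of cumulants, each requiring only finite fourth moments:
\begin{itemize}
\item they are additive over independent summands;
\item they are homogeneous, with $c_k(aX)=a^k c_k(X)$;
\item since $Z$ and $N$ are centered, $\E R=0$, so the moment and cumulant forms agree.
\end{itemize}
In the conditional case, every expectation is taken under the conditional law given the specified history. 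Independence is then understood in that law, and the same algebra applies verbatim.

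\emph{Key steps.}
\begin{enumerate}
\item Apply additivity and homogeneity to the two independent scaled summands. This gives
\[
c_2(R)=(1-\lambda)^2\sigma_Z^2+\lambda^2\sigma_N^2\equiv\sigma_R^2,
\qquad
c_4(R)=(1-\lambda)^4c_4(Z)+\lambda^4c_4(N).
\]
If one prefers not to cite cumulant additivity, one can expand $\E R^4$ instead. Independence and centering kill the odd cross terms, leaving the cross term $6(1-\lambda)^2\lambda^2\sigma_Z^2\sigma_N^2$. This cross term exactly matches the one in $3\sigma_R^4$.
\item Check that $\sigma_R^2>0$. For $\lambda\in[0,1]$, the coefficients $(1-\lambda)$ and $\lambda$ cannot both vanish, and both variances are positive. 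Hence $\kappa_R$ and $w$ are well defined.
\item Divide by $\sigma_R^4$. Write $c_4(Z)=\kappa_Z\sigma_Z^4$ and $c_4(N)=\kappa_N\sigma_N^4$, and identify
\[
w=\frac{(1-\lambda)^2\sigma_Z^2}{\sigma_R^2},\qquad 1-w=\frac{\lambda^2\sigma_N^2}{\sigma_R^2}.
\]
This yields $\kappa_R=w^2\kappa_Z+(1-w)^2\kappa_N$.
\item Treat the endpoints. At $\lambda=0$ we have $w=1$, and at $\lambda=1$ we have $w=0$. The formula reduces to $\kappa_Z$ and $\kappa_N$ respectively, consistent with the identity.
\end{enumerate}

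\emph{Scope remarks.}
\begin{itemize}
\item The projection step is trivial. A fixed nonzero deterministic linear map $v'$ preserves independence of the vector channels, giving the scalars $Z=v'\bm\zeta$ and $N=v'\bm\nu$. Positive variances of these projections are part of the hypotheses.
\item Vector norms and pooled windows are not covered. $\|R\|$ is not a linear combination of the channels, so the additivity step fails for it. In a pooled window, mixing over histories breaks independence and introduces a variance-mixture contribution to $c_4$.
\end{itemize}

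There is no substantive obstacle. The only care needed is checking that the cross terms cancel exactly, and keeping the conditional-versus-pooled distinction explicit so that independence is used only in the law where it is assumed.
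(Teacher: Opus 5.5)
Your proposal is correct and follows essentially the paper's proof: compute $\operatorname{Var}(R)=(1-\lambda)^2\sigma_Z^2+\lambda^2\sigma_N^2>0$, and show the fourth cumulant of $R$ equals $(1-\lambda)^4\kappa_Z\sigma_Z^4+\lambda^4\kappa_N\sigma_N^4$. The paper does this by the explicit expansion you give as your alternative, with the $6(1-\lambda)^2\lambda^2\sigma_Z^2\sigma_N^2$ cross term cancelling against $3s^4$, and then divides by $s^4$; your appeal to cumulant additivity and homogeneity is the same calculation in packaged form, and your positivity check for $\sigma_R^2$ matches the paper's.
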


The proof is given in Appendix~\ref{proof:kurtshare}.

An independent zero-excess noise floor contributes variance but no fourth cumulant, reducing the positive
excess kurtosis of the remaining disturbance. For two pulsed channels, both cumulants contribute.
Applying Lemma~\ref{lem:kurtshare} to these alternatives and comparing event with quiet variance,
Proposition~\ref{prop:o2o3} establishes when correction suppresses the twin response and which
component restrictions distinguish impulsive from inert shape.

\Needspace{5\baselineskip}
\begin{proposition}[Disturbance-conditioned variance and kurtosis comparisons]
\label{prop:o2o3}
Compare fixed quiet and event sets with finite moments. Let $V_t$ be the trace of the
unstabilized twin's conditional covariance and suppose
$\Delta V:=\E[V_t\mid\mathrm{event}]-\E[V_t\mid\mathrm{quiet}]>0$.
In (C), assume the external and body channels are conditionally independent and that their
trace-variance increments sum to a positive value. Then:

\emph{(i)} Each uncorrected regime is its own twin, so its variance and defined kurtosis increments
equal the corresponding twin increments; in particular $\Delta H=\Delta H^{\mathrm{pass}}>0$.

\emph{(ii)} In any conditioning set where the independent scalar-channel identity of
Lemma~\ref{lem:kurtshare} applies, (C) has
$\kappa_C=w_b^2\kappa_b+w_e^2\kappa_e$.
When the component channels are independent within each event and quiet window law,
the event-minus-quiet excess-kurtosis increment
is positive exactly when this weighted sum is larger for the event law. For averages of conditional
kurtoses, average the entire weighted expression in each window instead. Event amplification alone
does not establish either inequality.
Under the separate cross-regime ordering condition following Remark~\ref{rem:kurtosis},
$\kappa_C\le\kappa_I$.

\emph{(iii)} In the linear (S) model under Assumption~\ref{ass:A3}(d), with constant $\lambda$,
constant floor covariance, conditional independence, and $K=0$ in quiet windows,
$\Delta H\le(1-\lambda)^2\Delta V<\Delta V$ if $\lambda>0$.
For $\lambda=0$, $\Delta H<\Delta V$ holds exactly when
$\E[(1-(1-K_t)^2)V_t\mid\mathrm{event}]>0$.
With a constant gain in both windows, instead
$\Delta H=(1-K)^2(1-\lambda)^2\Delta V$, strictly below $\Delta V$ when $K>0$ or $\lambda>0$.

For shape, fix a conditional history or a window law satisfying the lemma's independence
conditions. If $\kappa_Z>0$, $\kappa_N=0$, both component variances are positive and
$0<\lambda<1$, then $0<\kappa_R=w^2\kappa_Z<\kappa_Z$.
Predictable recovery scaling preserves this \emph{conditional} ordering; pooled window kurtosis
requires a separate analysis of the variation in scales across histories. At $\lambda=0$
the twin shape is unchanged; at $\lambda=1$ the floor shape is attained.
\end{proposition}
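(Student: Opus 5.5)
The plan is to prove the three parts separately, reducing each to a short moment computation driven by the definitions in equations~\eqref{eq:withinstep}--\eqref{eq:authority}, the covariance recursion~\eqref{eq:hrec}, and Lemma~\ref{lem:kurtshare}.

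\emph{Part (i).} In (I) and (C) we have $\mathbf o_t\equiv0$, so the unstabilized twin coincides with the process itself. Hence $H_t=H_t^{\mathrm{pass}}$ and $V_t=\operatorname{tr}H_t$ on every history, and any conditional or window kurtosis agrees with its twin counterpart. Taking event and quiet conditional expectations then gives $\Delta H=\Delta H^{\mathrm{pass}}=\Delta V>0$. For (C), positivity can also be read off directly: conditional independence makes the channel covariances add, so the trace increment equals the sum of the channel increments, which is positive by hypothesis.

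\emph{Part (ii).} I will apply Lemma~\ref{lem:kurtshare} with $\lambda=1/2$ and the roles $Z=\zeta^e$, $N=\zeta^b$; the common scaling factor cancels in standardized kurtosis. This gives $\kappa_C=w_e^2\kappa_e+w_b^2\kappa_b$ with $w_e+w_b=1$. The event-minus-quiet statement then holds by definition, since the increment is the difference of the two weighted sums. For the cross-regime ordering, the standing condition gives $\kappa_e=\kappa_I\ge0$ and $\kappa_b\le\kappa_I$. Therefore
\[
\kappa_C\le(w_e^2+w_b^2)\kappa_I\le\kappa_I,
\]
because $w_e^2+w_b^2\le(w_e+w_b)^2=1$ and $\kappa_I\ge0$.

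\emph{Part (iii), variance.} Conditional independence of the disturbance and the floor gives
\[
\Sigma_t=(1-\lambda)^2\operatorname{Var}(\zeta_t\mid\mathcal F_{t-1})+\lambda^2F_\nu,
\]
and the twin trace is $V_t=\operatorname{tr}\operatorname{Var}(\zeta_t\mid\mathcal F_{t-1})$. Since $H_t=(1-K_t)^2\Sigma_t$, we have $\operatorname{tr}H_t=(1-K_t)^2[(1-\lambda)^2V_t+\lambda^2\operatorname{tr}F_\nu]$.

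In quiet windows $K=0$. In event windows I use $(1-K_t)^2\le1$. The constant floor term $\lambda^2\operatorname{tr}F_\nu$ appears in both windows, but in the event window it is multiplied by $(1-K_t)^2$. This gives
\[
\Delta H\le(1-\lambda)^2\Delta V-\E\bigl[(1-(1-K_t)^2)\lambda^2\operatorname{tr}F_\nu\mid\mathrm{event}\bigr]\le(1-\lambda)^2\Delta V,
\]
and the final bound is strictly below $\Delta V$ when $\lambda>0$ and $\Delta V>0$. At $\lambda=0$ the floor term vanishes, so $\Delta H=\Delta V-\E[(1-(1-K_t)^2)V_t\mid\mathrm{event}]$, which yields the stated equivalence. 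With a constant gain in both windows, the factor $(1-K)^2$ multiplies both expectations and the floor constant cancels in the difference, giving $\Delta H=(1-K)^2(1-\lambda)^2\Delta V$.

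I expect the main obstacle to be the role of Assumption~\ref{ass:A3}(d)'s trace inequality. It is needed for level comparisons against the quiet baseline, not for the increment bound, so I will check carefully that the increment argument above does not silently require it.

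\emph{Part (iii), shape.} Lemma~\ref{lem:kurtshare} with $\kappa_N=0$ gives $\kappa_R=w^2\kappa_Z$. When both variances are positive and $0<\lambda<1$, we have $0<w<1$, so $0<\kappa_R<\kappa_Z$. The endpoints follow directly: $w=1$ at $\lambda=0$ and $w=0$ at $\lambda=1$. Recovery scaling multiplies $\mathbf r_t$ by the $\mathcal F_{t-1}$-measurable scalar $1-K_t$. Excess kurtosis is scale invariant for a nonzero scalar factor, so the conditional ordering is preserved; $1-K_t>0$ because $K_t\le K_{\max}<1$. Pooled windows mix scales across histories, so this argument does not extend to them, which is why the statement restricts the conclusion to conditional kurtosis.
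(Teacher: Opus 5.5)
Your proposal is correct and follows the paper's proof essentially step for step. It uses the twin identity for (i) and the share identity with the bound $w_b^2+w_e^2\le1$ for (ii); for the variance increments it uses the decomposition $\operatorname{tr}H_t=(1-K_t)^2[(1-\lambda)^2V_t+\lambda^2\operatorname{tr}F_\nu]$ with $K=0$ in quiet windows, and for conditional shape the share identity with predictable-scale invariance. Your $\lambda=1/2$ device for applying Lemma~\ref{lem:kurtshare} to $\bm\zeta^e_t+\bm\zeta^b_t$ is only a cosmetic variant, and your observation that the trace bound of Assumption~\ref{ass:A3}(d) serves the level ordering rather than the increment bound matches how the paper uses it.
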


The proof is given in Appendix~\ref{proof:o2o3}.

The variance result isolates the response to stronger forcing. Before recovery scaling, the
common sensing-noise variance cancels from the event-minus-quiet difference, whereas the physical
disturbance increment remains multiplied by $(1-\lambda)^2$. With zero quiet gain, recovery
removes an additional nonnegative amount of event variance; a common constant gain instead
multiplies the entire increment by $(1-K)^2$. Thus even a noisy correction system can respond
less strongly to an increase in forcing than its uncorrected twin. A lower absolute variance
level additionally requires that the removed disturbance variance outweigh the injected noise.
These comparisons keep the forcing law fixed; another body's lower variance could instead
reflect weaker physical coupling.

The shape result also separates physical amplitude from standardized variability. A second
independent pulsed channel can lower excess kurtosis because the increase in total variance
changes the normalization of both fourth cumulants. The ordering $\kappa_C\le\kappa_I$
requires the stated comparison of component shapes, rather than following from greater activity
alone. Correction with a zero-excess floor also lowers positive excess kurtosis, but does so
together with a twin-suppression contrast. Shape therefore needs to be read jointly with the
variance comparison when identifying the physical class. A nonzero-excess floor contributes
its own fourth cumulant, whose variance share determines the resulting comparison; none of
these scalar moment statements orders entire densities or asymptotic tail classes.

A remaining ambiguity concerns which part of the controller produces the shape change. Applying
a fixed fraction to every remaining disturbance shrinks small and large increments together,
so it need not alter standardized shape. Yet observations pooled across a recovery episode
combine periods of different variability, which can change sample shape even when each
conditional shape is preserved. Proposition~\ref{prop:o3inv} separates this pooling effect
from the mixture change caused by within-step cancellation. This distinction is needed to
interpret an observed shape reduction as evidence about a particular correction law.

\Needspace{5\baselineskip}
\begin{proposition}[Conditional shape invariance and its pooling limit]
\label{prop:o3inv}
Under Assumption~\ref{ass:A3}(d), with $c_t=1-K_t>0$ predictable:
\emph{(i)} every defined conditional standardized central moment of $c_t\mathbf r_t$
equals that of $\mathbf r_t$ in a fixed scalar projection. A predictable additive term changes
the conditional mean only. This does not assert invariance or monotonicity of pooled moments.
For the separate scale-mixture model $X=SZ$, assume $S\ge0$ is independent of centered $Z$,
$0<\E S^2<\infty$, $\E S^4<\infty$, $\E Z^2=1$ and $\E Z^4<\infty$. Then
\[
\kappa_X+3=(\kappa_Z+3)\frac{\E S^4}{(\E S^2)^2}\ge\kappa_Z+3.
\]
This compares a mixture with a constant-scale copy of $Z$. Comparing two mixtures requires
comparing their respective scale ratios.

\emph{(ii)} In a fixed conditional law, let the independent disturbance and floor have positive
finite variances, finite fourth moments, $\kappa_Z>0$ and $\kappa_N=0$.
As $\lambda$ increases from $0$ to $1$ with those laws fixed, the excess kurtosis of
$(1-\lambda)Z+\lambda N$ decreases strictly from $\kappa_Z$ to $0$.
For a floor with nonzero excess kurtosis the general share identity applies instead.
The Gaussian floor is a convenient model for small sensing errors and has zero fourth cumulant.
The result also holds for non-Gaussian floors with zero fourth cumulant and the stated finite
moments and independence.
\end{proposition}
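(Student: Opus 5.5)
Part~(i) is a direct conditional-moment computation. Fix a deterministic unit projection $\mathbf a$ and write $X_t=\mathbf a'\mathbf r_t$. Because $c_t=1-K_t$ is $\Fcal_{t-1}$-measurable and strictly positive, conditioning on $\Fcal_{t-1}$ lets us treat $c_t$ as a positive constant. Then
\[
\E[(c_tX_t)^k\mid\Fcal_{t-1}]=c_t^k\E[X_t^k\mid\Fcal_{t-1}],
\]
and the conditional mean of $c_tX_t$ is zero, since $X_t$ is conditionally centered by Assumption~\ref{ass:A3}(d). So the $k$th conditional standardized central moment is a ratio in which $c_t^k$ cancels against $(c_t^2)^{k/2}$. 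This holds whenever the conditional variance $\mathbf a'\Sigma_t\mathbf a$ is positive, which is what ``defined'' means here. A predictable additive term $\mathbf a'\mathbf b_t$ shifts the conditional mean by a measurable amount. It therefore leaves every centered conditional moment unchanged and alters only the mean.

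For the scale-mixture model $X=SZ$, independence gives $\E X=\E S\,\E Z=0$. It also gives
\[
\E X^2=\E S^2\,\E Z^2=\E S^2,\qquad \E X^4=\E S^4\,\E Z^4.
\]
Hence
\[
\kappa_X+3=\frac{\E X^4}{(\E X^2)^2}=\frac{\E S^4}{(\E S^2)^2}\,\E Z^4=\frac{\E S^4}{(\E S^2)^2}\,(\kappa_Z+3).
\]
The inequality then follows from Jensen (or Cauchy--Schwarz) applied to $S^2$: $(\E S^2)^2\le\E S^4$. We also need $\kappa_Z+3=\E Z^4\ge(\E Z^2)^2=1>0$ to multiply through without reversing the inequality. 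The non-comparability remark for two mixtures needs no proof beyond noting that the factor $\E S^4/(\E S^2)^2$ depends on each mixing law separately.

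Part~(ii) reduces to Lemma~\ref{lem:kurtshare} with $\kappa_N=0$. That gives $\kappa_R(\lambda)=w(\lambda)^2\kappa_Z$, where
\[
w(\lambda)=\frac{(1-\lambda)^2\sigma_Z^2}{(1-\lambda)^2\sigma_Z^2+\lambda^2\sigma_N^2}.
\]
At the endpoints, $w(0)=1$ and $w(1)=0$, so $\kappa_R$ runs from $\kappa_Z$ to $0$. The one step requiring care is strict monotonicity of $w$ on $[0,1]$. I would write
\[
1/w=1+\frac{\sigma_N^2}{\sigma_Z^2}\bigl(\lambda/(1-\lambda)\bigr)^2 \quad\text{on }[0,1).
\]
Since $\lambda/(1-\lambda)$ is strictly increasing and nonnegative there, $1/w$ is strictly increasing. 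At $\lambda=1$ we have $w=0<w(\lambda)$ for every $\lambda<1$, which handles the endpoint. Because $w\in[0,1]$ and $\kappa_Z>0$, the map $w\mapsto w^2\kappa_Z$ is strictly increasing, so $\kappa_R$ is strictly decreasing.

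Positivity of both variances is what keeps $w$ well defined. The denominator vanishes only if both terms vanish, which is impossible for $\lambda\in[0,1]$. The non-Gaussian zero-cumulant statement requires nothing extra, because the lemma uses only independence, finite fourth moments, and $\kappa_N=0$. The case of a floor with nonzero excess kurtosis is simply the general identity and needs no argument.
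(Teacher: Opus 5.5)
Your proposal is correct and follows essentially the same route as the paper. It uses cancellation of the predictable positive scale $c_t$ under conditional standardization, and the independent-product moment calculation with $(\E S^2)^2\le\E S^4$ (the paper phrases this as $\operatorname{Var}(S^2)\ge0$). It also derives strict monotonicity of the share $w$ from the increasing ratio $\lambda/(1-\lambda)$ in $\kappa_R=w^2\kappa_Z$, with your explicit checks of $\E Z^4\ge1>0$ and the $\lambda=1$ endpoint being minor details the paper leaves implicit.
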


The proof is given in Appendix~\ref{proof:o3inv}.

The recovery gain rescales the residual and its conditional standard deviation by the same
factor, so standardization removes that action. Physically, this stage suppresses new velocity
fluctuations without changing their conditional shape. Pooling the recovery episode retains
the different scales: a high-variance period contributes disproportionately to the fourth
moment. The independent scale-mixture formula quantifies this effect relative to a constant-scale
law, while comparison of two mixtures requires their respective scale-moment ratios.

Within-step cancellation changes the balance between disturbance and correction noise instead.
For fixed component laws and a zero-excess floor, increasing $\lambda$ lowers the disturbance's
variance share and hence its conditional excess kurtosis. Without sensing noise, partial
cancellation would only rescale the disturbance, just as the recovery gain does; complete
cancellation would leave zero variance and undefined kurtosis. Consequently, absence of a shape
change remains compatible with correction that acts through scaling alone.

A gain chosen in response to the current disturbance can treat large and small increments
differently. Remark~\ref{rem:restoration} identifies why this requires a joint moment calculation
rather than the predictable-scaling argument.

\Needspace{5\baselineskip}
\begin{remark}[Conditions for excess-kurtosis reduction]
\label{rem:restoration}
Predictability permits a scalar gain to be taken outside a conditional variance. If a gain
depends on the current disturbance, that calculation generally fails and its moments must be
derived from the joint law. The within-step linear mixing law sets conditional shape; the
predictable gain sets a variance response only after the covariance before recovery scaling is fixed.
\end{remark}

The instrument can introduce another independent component after the body has moved.
Under a suitable additive observation law, measurement noise can dilute excess kurtosis through
the same variance-share identity as correction noise. State propagation, filtering, and
reference fitting determine whether that identity applies to the measured residual, and changes
in the forcing mixture can also alter pooled shape. An observed reduction must therefore exceed
what the specified uncorrected motion and observation process predict. Remark~\ref{rem:attribution-scope}
separates this rejection of a benchmark from identification among the remaining mechanisms.

\Needspace{5\baselineskip}
\begin{remark}[What an observed excess-kurtosis reduction identifies]
\label{rem:attribution-scope}
A measured shape reduction beyond a specified prediction from uncorrected motion and measurement noise is evidence
against that prediction. Even jointly with a variance statistic, it does not identify a unique
cancellation mechanism without further restrictions on alternative input and observation models.
\end{remark}

Less variable motion indicates suppression only against the same forcing without correction.
Proposition~\ref{prop:o2o3} establishes that comparison. Shape alone leaves two explanations:
added body pulses and correction mixed with sensing noise can both make large increments less
prominent, as Lemma~\ref{lem:kurtshare} shows. A proportional gain can instead suppress
fluctuations without changing their conditional shape (Proposition~\ref{prop:o3inv}). Thus unchanged
kurtosis leaves that form of correction possible, while lower kurtosis alone leaves added forcing
possible. Both predictions must include the observing process
\citep{carpino2003errors,veres2017statistical}. When shape is unchanged or no usable forcing
contrast exists between event and quiet windows, an isolated impulse provides another comparison.

\subsection{Post-disturbance variance suppression}
\label{sec:o4}

A controller can strengthen its rejection of disturbances even when the environment remains
quiet and the conditional shape is unchanged. An isolated impulse makes this response testable:
it displaces the body, after which the same background fluctuations continue to act. Under the
inert restriction, the changed velocity does not alter their conditional covariance. Under the
rising-gain correction law, stronger rejection reduces the unpredictable part of later inputs.
The comparison thus varies the response to the incident impulse rather than requiring a
persistent increase in environmental forcing.

Fix the pre-impulse history $\mathcal H_0$ and compare two continuations: one receives an
exogenous velocity input $\bm\delta$ at $t_0$, and the other does not. The test input is distinct
from the stochastic innovation $\bm\eta_{t_0}$. Write $\mu$ for the gravitational parameter
used in the local orbital approximation. To distinguish the retained displacement from the
change in subsequent variability, define
\[
\begin{split}
\mu_h(\mathcal H_0,\bm\delta)&=
\E[e_{t_0+h}^{\bm\delta}\mid\mathcal H_0]-\E[e_{t_0+h}^{0}\mid\mathcal H_0],\\
v_h(\mathcal H_0,\bm\delta)&=
\E[\operatorname{tr}H_{t_0+h}^{\bm\delta}\mid\mathcal H_0]-
\E[\operatorname{tr}H_{t_0+h}^{0}\mid\mathcal H_0].
\end{split}
\]
Here $e$ is a fixed displacement coordinate and $h$ counts sampling steps of duration $\Delta t$.
Both continuations keep the environment quiet; the unperturbed (S) continuation has $K=0$
and covariance $\Sigma$ before recovery scaling. Using the same pre-impulse history compares
what the event changes, rather than comparing an event-selected record with an unrelated quiet
one. A population average uses the same distribution of histories on both sides, as in
generalized impulse-response analysis \citep{koop1996impulse,hafner2006volatility}.

With later mean inputs held equal, only the retained initial velocity separates the mean
continuations. A smaller observed ramp could therefore reflect either onset correction or a
smaller incident impulse. The additional question is whether later innovation variance changes
while background covariance stays fixed. Proposition~\ref{prop:o4} derives both responses,
showing how the variance contrast resolves the inert--rising-gain ambiguity left by the local
mean trajectory.

\Needspace{5\baselineskip}
\begin{proposition}[Local impulse ramp and conditional-variance response]
\label{prop:o4}
Use the same-history response definitions above, Assumption~\ref{ass:A2} in (I), and the linear
law and isolated-episode conditions of Assumption~\ref{ass:A3} in (S). In each regime,
the initial state is finite and fixed by $\mathcal H_0$, apart from the onset-retained
test impulse. At each future step $j$ in the response horizon, both intervention laws satisfy
\[
\E[\|\mathbf b_j\|+\|\bm\varepsilon_j\|\mid\mathcal H_0]<\infty,
\qquad
\E[\mathbf b_j^{\bm\delta}\mid\mathcal H_0]
=\E[\mathbf b_j^{0}\mid\mathcal H_0].
\]
Future innovations are conditionally centered under their respective intervention laws;
their covariances may differ. Conditional on $\mathcal H_0$ and the fixed test input,
$K(h)$ is a specified nonrandom schedule. Hold the environment quiet. In (I), its
conditional law given $\mathcal H_0$ is identical in both continuations, and at every
response horizon $h\ge0$ require
\[
\E[\operatorname{tr}g(W_{t_0+h})\mid\mathcal H_0]<\infty.
\]
In (S), the covariance before recovery scaling is the same fixed $\Sigma\succ0$ in both continuations.
Take the coordinate along
the nonzero input so $\delta=\|\bm\delta\|>0$, with the retained input collinear as stipulated
in Assumption~\ref{ass:A3}(b). Then:

\emph{(i) Level response (O4a).} In the constant-transition model~\eqref{eq:jordan},
$\mu_h^I=\delta h\Delta t$ and $\mu_h^S=\eta\delta h\Delta t$ for $h\ge0$.
These are local leading-order formulas under gravity, for small relative displacement and
$h\Delta t\ll\sqrt{r^3/\mu}$; they are not infinite-horizon Keplerian claims.
If the incident input size is unknown, the retained slope alone does not separate (I) and (S).

\emph{(ii) Variance response (O4b).} For the reaction-gain episode,
\[
v_h^I=0,\qquad v_h^S=[(1-K(h))^2-1]\operatorname{tr}\Sigma
=-K(h)(2-K(h))\operatorname{tr}\Sigma.
\]
The latter is zero at $K(h)=0$, strictly negative at $K(h)>0$, and nonincreasing along the
stipulated nondecreasing gain path. If that idealized path and covariance law are continued
indefinitely, its limit is $-K_{\max}(2-K_{\max})\operatorname{tr}\Sigma<0$.
This limit does not extend the physical validity horizon of part~(i). With the same constant
gain in both continuations, $v_h^S=0$ instead.
\end{proposition}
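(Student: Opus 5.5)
The plan is to treat the two parts separately. Part (i) follows from linearity of conditional expectation applied to the Jordan recursion~\eqref{eq:jordan}. Part (ii) is a direct computation with the covariance laws of Assumptions~\ref{ass:A2} and~\ref{ass:A3}.

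For (i), I would iterate $\mathbf z_{t+1}=\mathcal T\mathbf z_t+(\mathbf 0',(\mathbf b_{t+1}+\bm\varepsilon_{t+1})')'$ from $t_0$ in each continuation. This writes $\mathbf z_{t_0+h}$ as $\mathcal T^h\mathbf z_{t_0}$ plus a finite sum $\sum_{j=1}^{h}\mathcal T^{h-j}(\mathbf 0',(\mathbf b_{t_0+j}+\bm\varepsilon_{t_0+j})')'$.

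The integrability hypothesis $\E[\|\mathbf b_j\|+\|\bm\varepsilon_j\|\mid\mathcal H_0]<\infty$ makes each term conditionally integrable. Conditional centering of the innovations, via the tower property through $\Fcal_{j-1}$, removes the $\bm\varepsilon$ terms. The equal-mean hypothesis $\E[\mathbf b_j^{\bm\delta}\mid\mathcal H_0]=\E[\mathbf b_j^{0}\mid\mathcal H_0]$ makes the $\mathbf b$ terms cancel in the difference.

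What remains is $\mathcal T^h$ applied to the difference of initial states. The continuations share the same $\mathcal H_0$ and differ only by the retained velocity: $\bm\delta$ in (I), and $\eta\bm\delta$ in (S) by the collinear retention of Assumption~\ref{ass:A3}(b). That difference is $(\mathbf 0',\bm\delta')'$ or $(\mathbf 0',\eta\bm\delta')'$. Reading the position block of $\mathcal T^h$ gives $h\Delta t\,\bm\delta$, and projecting onto the coordinate along $\bm\delta$ yields $\delta h\Delta t$ and $\eta\delta h\Delta t$.

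I would then remark on scope. The non-separation statement is immediate because $(I,\delta)$ and $(S,\delta/\eta)$ give identical slopes. The horizon caveat is a statement about where~\eqref{eq:jordan} is valid (Appendix~\ref{app:longhorizon}), not something to prove.

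For (ii) in regime (I), Assumption~\ref{ass:A2} gives $H_t=g(W_t)$ with $\Acal=\Bcal=0$. Thus the impulse does not enter the covariance recursion~\eqref{eq:hrec}. The environment has the same conditional law given $\mathcal H_0$ in both continuations, so $\E[\operatorname{tr}g(W_{t_0+h})\mid\mathcal H_0]$ is identical and, by hypothesis, finite. The difference is therefore a well-defined zero, and $v_h^I=0$.

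For (ii) in regime (S), equation~\eqref{eq:authority} gives $H_t=(1-K_t)^2\Sigma_t$. With quiet environment, constant $\lambda$ and constant $F_\nu$, the pre-scaling covariance is the same $\Sigma$ in both continuations. The gain is $K(h)$ in the perturbed continuation and $0$ in the unperturbed one, and it is nonrandom given $\mathcal H_0$ and the test input. Hence
\[
v_h^S=[(1-K(h))^2-1]\operatorname{tr}\Sigma=-K(h)(2-K(h))\operatorname{tr}\Sigma .
\]
The sign and monotonicity claims follow from three facts: $x\mapsto x(2-x)$ is strictly increasing and positive on $(0,1)$, $K$ is nondecreasing with $K(0)=0$, and $\operatorname{tr}\Sigma>0$ because $\Sigma\succ0$. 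The limit as $K(h)\to K_{\max}$ follows by continuity. In the constant-gain variant the two factors $(1-K)^2$ coincide, so $v_h^S=0$.

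The main obstacle is not algebraic. It is justifying that the impulse leaves $\Sigma_t$ unchanged in (S), that is, that the perturbed continuation's pre-scaling covariance equals the unperturbed $\Sigma$. I would take this from the hypothesis stated in the proposition rather than deriving it. I would also note that exchanging expectation and trace needs only finiteness of $\Sigma$.
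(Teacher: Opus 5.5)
Your proposal is correct and follows essentially the same route as the paper. For part (i), you propagate the retained velocity difference $(\mathbf 0',\eta\bm\delta')'$ through $\mathcal T^h$, remove the innovation means by conditional centering and the tower property, and cancel the predictable inputs with the equal-mean hypothesis. For part (ii), you get $v_h^I=0$ from the common environment law and $v_h^S$ from the common $\Sigma$ with gains $K(h)$ versus $0$. As in the paper, the equality of the pre-scaling covariance across continuations must be taken as the stipulated hypothesis (constant $\lambda$ and $F_\nu$ alone would not give it), which your final paragraph correctly does. Your monotonicity argument via $x\mapsto x(2-x)$ is equivalent to the paper's derivative computation.
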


The proof is given in Appendix~\ref{proof:o4}.

After a kick, a changed course and a quieter response to later disturbances are separate
outcomes. Proposition~\ref{prop:o4} shows that both references have a smooth local mean displacement,
but only the rising-gain reference suppresses later velocity fluctuations at positive-gain
horizons. An unknown impulse size leaves the ramp compatible with a smaller uncorrected kick;
the variance response supplies the distinction. It concerns new increments, not automatic
narrowing of position uncertainty or return to the old path. Position accumulates those
increments, and return requires additional mean steering. The covariance result uses second
moments and scalar scaling, without requiring Gaussian disturbances or an exponential gain path.

This comparison detects a change in rejection after the impulse. A controller already operating
at the same constant gain in both continuations has no such change, even if it reduces each
new disturbance. Its effect must instead be measured against an uncorrected benchmark, through
an increment, shape, or variance-level contrast. Remark~\ref{rem:smodes} distinguishes these
two observation designs and the different matching information they require.

\Needspace{5\baselineskip}
\begin{remark}[Two observability modes of the stabilized regime]
\label{rem:smodes}
For the isolated impulse response, covariance before recovery scaling is fixed. A rising gain
produces a negative conditional-variance response wherever $K(h)>0$; the same constant gain in
both continuations produces zero response. Constant-gain stabilization can instead be detected
through attenuation relative to an unstabilized twin, or through a lower variance level
when correction outweighs the injected sensing noise.
The rising-gain response compares two continuations of the same object's history. The twin
and variance-level comparisons require a validated uncorrected benchmark, obtained from a model,
matched objects, or both. A comparison across objects must account for their coupling,
environment, and observation laws.
A fixed gain alone preserves conditional standardized kurtosis. Under the stated moment conditions,
mixing a positive-excess disturbance with an independent zero-excess sensing floor reduces
conditional excess kurtosis for $0<\lambda\le1$.
\end{remark}

An observed decline in variability can also occur because background forcing quiets or because the event detector selects the end of an active episode. Excluding the inert reference therefore requires the same-history, fixed-environment comparison of Remark~\ref{rem:eliminative}, together with its uncertainty requirement.

\Needspace{5\baselineskip}
\begin{remark}[Exclusion logic and alternative mechanisms]
	\label{rem:eliminative}
	A negative response estimates a departure from Assumption~\ref{ass:A2} only if the observation
	and event-selection model identifies the same-history, fixed-environment contrast.
	A statistical implementation could require a valid simultaneous confidence band below zero at
	pre-specified positive horizons where the alternative has $K(h)>0$, or a calibrated aggregate
	statistic. The onset response is zero and cannot satisfy a strict negative-band condition.
	A bootstrap label alone does not establish coverage or guarantee rejection.
	A valid rejection excludes the specified inert reference law. Attribution to stabilization
	additionally requires separating other admitted forcing and observation mechanisms; the
	application-specific checks are collected in Appendix~\ref{app:empirical}.
\end{remark}

Continued body activity can add further kicks after the initial impulse, producing more complicated
level responses through the delayed or interacting inputs represented by the finite-lag models in
Appendix~\ref{app:framework}. Weak, unresolved, or sufficiently regular pulses can nevertheless
produce a smooth record at the observed cadence, so the impulsive case in
Appendix~\ref{app:authority} requires its own forcing comparison.

At longer horizons, gravity alone can produce apparent return. Appendix~\ref{app:longhorizon}
derives the uncorrected continuation for each orbital class: period changes produce linearized
phase drift on bound orbits, while escape energy determines the growth of separation on
unbound paths. A crossing, drift, or bounded separation must therefore be compared with the
uncorrected trajectory and its uncertainties before it is attributed to steering.

For a specified pre-impulse forecast, one possible return statistic is
$d_{t_0,\tau}=\|\mathbf x_{t_0+\tau}
-\hat{\mathbf x}^{\,0}_{t_0+\tau\mid t_0^-}\|$.
A decrease or a finite-time zero is compared with the gravitationally propagated uncorrected
trajectory, keeping the reference fixed rather than refitting it after the impulse.

The operational question is whether the body merely follows a changed course or also changes
its response to new disturbances. Proposition~\ref{prop:o4} separates those outcomes for the
inert and rising-gain references. A controller that accepts the shifted course can still reject
disturbances (Remark~\ref{rem:adoption}), and a steady controller needs the matched comparisons
of Remark~\ref{rem:smodes}. Continued body activity supplies another possible cause of changing
variability. The joint comparison must therefore distinguish correction from both inert
propagation and additional forcing.

\subsection{Joint reduction and identifiable uniqueness}
\label{sec:joint}

The separate comparisons now explain why the diagnostics must be combined. Added body pulses
and correction noise can both lower excess kurtosis, so a shape reduction alone does not select
one mechanism. Their relation to the uncorrected twin differs: a pulsed body without correction
reproduces its own twin, whereas effective correction can suppress variance or shape relative
to that same forcing. With positive inert excess kurtosis and two nonzero independent channels obeying the stated
shape ordering, a match to the inert benchmark excludes the impulsive alternative; a strict
twin-suppression contrast excludes both uncorrected regimes. These comparisons first determine whether impulsive forcing remains
compatible, then whether the retained inert and stabilized explanations can be separated.

An isolated impulse supplies separation when the other moments remain alike. A rising recovery
gain changes later variance even if the environment does not become more variable and conditional
shape is preserved. Constant-gain correction instead requires a matched increment, shape, or
variance-level comparison. A joint signature records whichever of these physically justified
contrasts are available, rather than requiring every mechanism to produce the same diagnostic
pattern.

For a reference regime $R\in\{I,C,S\}$, define the available suppression contrasts
\[
d_{2,R}=\Delta V_R-\Delta H_R,\qquad
d_{3,R}=\kappa_R^{\mathrm{pass}}-\kappa_R,\qquad
d_{4,R}(h)=-v_h^R.
\]
The first compares event-minus-quiet trace-variance increments with the same
forcing's unstabilized twin. The second compares scalar excess kurtoses in a
fixed declared conditional law, using that law for both the object and its twin.
The third is the same-history impulse contrast of Proposition~\ref{prop:o4}.
An unavailable contrast supplies no separating evidence. Write $\mathsf F_R=1$ when
the centered forecast residual satisfies O1's maintained regularity conditions.

The sign of a population contrast is only part of the observational question. Weak correction
can make a strict suppression arbitrarily small, and projection or measurement noise can reduce
the difference further. To assess whether the observed contrasts resolve the classes, choose
$m\ge1$ common available continuous signature coordinates, a declared observation model, and
fixed positive scales $a_1,\ldots,a_m$. The dimensionless norm
$\|x\|_a=\max_{1\le j\le m}|x_j|/a_j$ permits variance and shape coordinates to be compared
without adding unlike physical units. Let $\mathcal S_R\subset\mathbb R^m$ be each regime's
nonempty set of allowed observation-level signatures, including permitted nuisance parameters.
The relevant separation is from the closest signature any rival regime can generate. For
$s\in\mathcal S_R$, define
\begin{equation}
\delta_R(s)=\inf_{u\in\bigcup_{R'\ne R}\mathcal S_{R'}}\|s-u\|_a.
\label{eq:resolution-gap}
\end{equation}
Let $\varepsilon\ge0$ be a fixed calibrated error radius for the estimated signature,
accounting for instrumental uncertainty, sampling, and uncertainty in the reference comparisons.

There are two distinct ways for the physical difference to become unresolved. Near-perfect
cancellation can leave only the same observable noise as vanishing forcing, as in
Lemma~\ref{lem:zeroresidual}. Alternatively, a residual can be detectable while its variance
and shape remain compatible with more than one mechanism. The first requires an explicit
amplitude threshold; the second requires separation of the joint signatures at the achieved
precision. Assumption~\ref{ass:resolution} keeps those requirements separate.

\Needspace{5\baselineskip}
\begin{assumption}[Detectable imperfect correction and regime separation]
\label{ass:resolution}
For an observational uniqueness claim, every admitted stabilized model leaves a tracking error
above the achieved residual-amplitude detection threshold:
\begin{equation}
\rho_S:=\|P(\mathbf s-\mathbf o)\|_{\mathcal I}>\tau_{\mathrm{obs}},
\qquad\tau_{\mathrm{obs}}>0.
\label{eq:tracking-floor}
\end{equation}
Here $(P\mathbf u)_t=P_t\mathbf u_t$, and $\tau_{\mathrm{obs}}$ is calibrated for the
reconstructed residual amplitude in the chosen observation units over $\mathcal I$.
Thus the corrective offset tracks the disturbance with an observable approximation error;
perfect and observationally unresolved cancellation are excluded from the admitted family.
For the true regime $R$ and signature $s\in\mathcal S_R$, additionally require
$\delta_R(s)>2\varepsilon$. This second condition separates the observable tracking signature
from the competing regimes; detectable tracking error alone does not establish uniqueness.
\end{assumption}

Finite sensing accuracy and actuator limits motivate imperfect correction, but they do not
ensure that its residual is detectable or that its signature is separated from every rival.
Those comparisons require the observation model and its calibrated uncertainty. When the
signature estimate can be displaced by at most $\varepsilon$, a rival must remain more than
another $\varepsilon$ away to be incompatible at that tolerance. This explains the sufficient
$2\varepsilon$ bound.

The separate contrasts must now distinguish all three mechanisms together.
Corollary~\ref{cor:joint} combines the shape restriction with suppression relative to uncorrected
motion, then applies the observation-level separation bound.

\Needspace{5\baselineskip}
\begin{corollary}[Identifiable uniqueness of the dynamical regime]
\label{cor:joint}
Consider the reference regimes of Definition~\ref{def:regimes}. Apply the hypotheses
of each diagnostic to the comparison in which it is used.
Parts~(i)--(ii) concern population signatures; part~(iii) additionally imposes the observable
imperfection and separation conditions of Assumption~\ref{ass:resolution}.

\emph{(i) Joint reduction.}
For a fixed scalar projection in a declared conditional law, suppose the inert
external-channel benchmark has $\kappa_I>0$. In (C), let the external and body
channels be independent, each with positive finite variance and finite fourth
moment, with $\kappa_e=\kappa_I$ and $\kappa_b\le\kappa_I$. Then
\[
\kappa_C<\kappa_I,\qquad d_{2,C}=d_{3,C}=0
\]
whenever the contrasts are defined. The joint O1--O3 condition
\begin{equation}
\mathsf F_R=1,\qquad
\bigl[\kappa_R=\kappa_I\ \text{or}\ d_{2,R}>0\ \text{or}\ d_{3,R}>0\bigr]
\label{eq:endpoint-reduction}
\end{equation}
therefore excludes (C) and is satisfied by (I). It also retains any (S) model
with an informative O2 or O3 suppression contrast. When $\lambda=0$ and
$\kappa_S^{\mathrm{pass}}=\kappa_I$, predictable positive recovery scaling
preserves $\kappa_S=\kappa_I$, so (S) is retained even without either suppression
signal. For candidates satisfying equation~\eqref{eq:endpoint-reduction}, the
reference set consequently reduces to $\{I,S\}$.

\emph{(ii) Separation of the retained pair.}
In (I), every defined contrast $d_{2,I}$, $d_{3,I}$, and $d_{4,I}(h)$ is zero.
In (S), each of the following gives a strict separation under its own hypotheses.

\emph{Variance attenuation (O2).} Under the strictness conditions of
Proposition~\ref{prop:o2o3}(iii), $d_{2,S}>0$. These include a positive uncorrected
event increment and effective within-step or recovery correction under the
stated common-window law.

\emph{Shape attenuation (O3).} In a fixed conditional law, suppose the twin has
positive excess kurtosis, the independent sensing floor has zero excess kurtosis,
both have positive finite variance and finite fourth moment, and $0<\lambda\le1$.
If $w$ is the disturbance's share of the mixed variance, then
\[
\kappa_S=w^2\kappa_S^{\mathrm{pass}},\qquad
d_{3,S}=(1-w^2)\kappa_S^{\mathrm{pass}}>0.
\]
Predictable positive recovery scaling preserves this conditional comparison.

\emph{Impulse-conditioned variance suppression (O4b).} Under
Proposition~\ref{prop:o4}, a reaction-gain episode gives
\[
d_{4,I}(h)=0,\qquad
d_{4,S}(h)=K(h)(2-K(h))\operatorname{tr}\Sigma>0
\quad\text{when }K(h)>0.
\]
This conclusion requires neither $\Delta V>0$ nor a positive excess kurtosis.
It therefore retains the I--S separation when the event-based variance and
shape comparisons are uninformative. The shared O4a responses
$\mu_h^I=\delta h\Delta t$ and $\mu_h^S=\eta\delta h\Delta t$ remain compatible
with a smooth local ramp when the incident impulse is unknown.

\emph{Constant-gain variance level.} A common constant gain gives
$d_{4,S}(h)=0$. For a matched forcing law with
$V=\operatorname{tr}\operatorname{Var}(\bm\zeta)>0$ and
$F=\operatorname{tr}F_\nu$, instead compare
\[
L_I=V,\qquad
L_S=(1-K)^2\bigl[(1-\lambda)^2V+\lambda^2F\bigr].
\]
Then $L_I-L_S>0$ if $\lambda=0$ and $K>0$, or if $\lambda>0$ and
$\lambda F<(2-\lambda)V$. O2 or O3 may also distinguish the constant-gain model
when their respective strictness conditions hold.

On a reference family in which the stabilized models satisfy the retention
condition in part~(i) and at least one strict suppression condition in part~(ii),
the joint population signatures identify a unique regime. O2 and O3 may each
contribute to both stages, while O4b supplies a separate endpoint comparison.

\emph{(iii) Observational uniqueness.}
Under Assumption~\ref{ass:resolution}, suppose the true signature is $s\in\mathcal S_R$
and its estimate satisfies $\|\widehat s-s\|_a\le\varepsilon$.
Then $R$ is the unique regime compatible with $\widehat s$, where compatibility means
$\inf_{u\in\mathcal S_R}\|\widehat s-u\|_a\le\varepsilon$.
If the error bound holds with probability at least $1-\alpha$, for $0<\alpha<1$,
the same uniqueness conclusion holds with probability at least $1-\alpha$.

\end{corollary}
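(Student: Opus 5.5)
The corollary is an assembly result, so the plan is to prove each part by invoking the earlier results on the relevant comparison and then combining them. The one new calculation is the strict kurtosis inequality in part~(i).

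\emph{Part (i).} Apply Lemma~\ref{lem:kurtshare} with $\lambda=1/2$, or directly the cumulant additivity behind Proposition~\ref{prop:o2o3}(ii), to the independent external and body channels. This gives $\kappa_C=w_e^2\kappa_I+w_b^2\kappa_b$ with $w_e+w_b=1$ and both shares in $(0,1)$, since both variances are positive. Using $\kappa_b\le\kappa_I$ and $\kappa_I>0$,
\[
\kappa_C\le(w_e^2+w_b^2)\kappa_I<\kappa_I,
\]
because $w_e^2+w_b^2<1$ when both shares are interior. This is the only step with real content. The obstacle is the case $\kappa_b<0$, but there the bound $w_b^2\kappa_b\le w_b^2\kappa_I$ still holds, so the inequality is unaffected.

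Next, Proposition~\ref{prop:o2o3}(i) says (C) and (I) are their own twins, so $d_{2,C}=d_{3,C}=0$ and likewise for (I) whenever the contrasts are defined. Hence (C) fails every disjunct of~\eqref{eq:endpoint-reduction}, while (I) satisfies $\kappa_R=\kappa_I$ and, by Assumption~\ref{ass:A1}, $\mathsf F_I=1$.

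For (S), Assumption~\ref{ass:A3}(c) supplies $\mathsf F_S=1$, and any strict O2/O3 contrast satisfies the disjunction. In the case $\lambda=0$, Proposition~\ref{prop:o3inv}(i) gives invariance of conditional kurtosis under predictable scaling, so $\kappa_S=\kappa_S^{\mathrm{pass}}=\kappa_I$. The surviving set is therefore $\{I,S\}$.

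\emph{Part (ii).} The zero contrasts for (I) come from Proposition~\ref{prop:o2o3}(i) and from $v_h^I=0$ in Proposition~\ref{prop:o4}(ii). Each (S) item is then a direct citation:
\begin{itemize}
\item O2 is Proposition~\ref{prop:o2o3}(iii).
\item O3 is Lemma~\ref{lem:kurtshare} with $\kappa_N=0$, giving $\kappa_S=w^2\kappa_S^{\mathrm{pass}}$. Here $w<1$ for $0<\lambda\le1$ because $\lambda^2\sigma_N^2>0$, and the conclusion is preserved under recovery scaling by Proposition~\ref{prop:o3inv}(i).
\item O4b is Proposition~\ref{prop:o4}(ii) with the sign flipped.
\end{itemize}
For the constant-gain level comparison I would compute $L_S$ from~\eqref{eq:withinstep}--\eqref{eq:authority} using independence. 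For $\lambda=0$, $L_S=(1-K)^2V<V$ when $K>0$. For $\lambda>0$, note $(1-K)^2\le1$ and
\[
V-(1-\lambda)^2V-\lambda^2F=\lambda\bigl[(2-\lambda)V-\lambda F\bigr]>0
\]
under the stated trace inequality.

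Uniqueness of the population signature then follows. Stage one removes (C). Stage two gives (S) a strictly positive coordinate that is identically zero on (I), so the signature sets of (I) and (S) are disjoint on that coordinate.

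\emph{Part (iii).} This is a triangle-inequality argument.
\begin{itemize}
\item $R$ is compatible with $\widehat s$, since $s\in\mathcal S_R$ and $\|\widehat s-s\|_a\le\varepsilon$.
\item For any $R'\ne R$ and any $u\in\mathcal S_{R'}$, we have $\|\widehat s-u\|_a\ge\|s-u\|_a-\varepsilon\ge\delta_R(s)-\varepsilon>\varepsilon$.
\item The bound $\delta_R(s)-\varepsilon$ holds uniformly in $u$, so the infimum over $\mathcal S_{R'}$ is at least $\delta_R(s)-\varepsilon>\varepsilon$, not merely greater than or equal to $\varepsilon$. Hence $R'$ is incompatible.
\end{itemize}
The probabilistic version follows because the uniqueness event contains the error-bound event. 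The amplitude part of Assumption~\ref{ass:resolution} is not needed in this deduction; it only ensures that the admitted family avoids the degenerate limit of Lemma~\ref{lem:zeroresidual}.
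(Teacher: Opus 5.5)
Your proposal is correct and follows the paper's proof essentially step by step. It obtains $\kappa_C<\kappa_I$ from the share identity, and your bound $\kappa_C\le(w_e^2+w_b^2)\kappa_I$ is the paper's exact decomposition $\kappa_I-\kappa_C=2w_b(1-w_b)\kappa_I+w_b^2(\kappa_I-\kappa_b)$ in another form. It cites the earlier propositions for the O2, O3, O4b and constant-gain contrasts, with your $(1-K)^2\le1$ bound matching the paper's direct subtraction, and it uses the same triangle-inequality argument for part~(iii), with the strict bound $\delta_R(s)-\varepsilon>\varepsilon$ preserved under the infimum.
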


The proof is given in Appendix~\ref{proof:joint}.

Under Corollary~\ref{cor:joint}'s restrictions, an inert classification means that two alternatives
have been excluded. Added body pulses would change the matched shape; the admitted corrected
models would suppress at least one response. Neither change is present in the inert signature.
The impulsive reference instead changes shape while still matching its own uncorrected twin;
the admitted stabilized references show a strict suppression contrast. These combinations
identify what the body does to disturbances even when its individual parameters remain unknown.

A detectable departure still needs an identifiable cause. Part~(iii) requires enough separation
that measurement error cannot make an admitted rival compatible: a gap greater than twice the
calibrated radius suffices. The bound covers every permitted rival specification, rather than
only the example trajectories selected for illustration.

The remaining statistical question is whether an estimated signature recovers these population
distinctions as the record becomes informative. Distance to a regime set changes by no more
than the error in the signature itself. Thus a positive gap to incompatible regimes eventually
outweighs estimation error when the signature estimate converges. Corollary~\ref{cor:classconsistency}
uses this property to obtain class consistency for a singleton and convergence in class
membership when several regimes share the same signature.

\Needspace{5\baselineskip}
\begin{corollary}[Class consistency and convergence to a class set]
\label{cor:classconsistency}
For $x\in\mathbb R^m$, write
\[
D_R(x)=\inf_{u\in\mathcal S_R}\|x-u\|_a,
\]
and let a measurable signature classifier choose
$\widehat R_T\in\arg\min_R D_R(\widehat s_T)$.
Suppose the true signature $s$ belongs to $\mathcal S_{R_0}$ and
$\widehat s_T\to s$ in probability. Let $\mathcal R(s)$ be the compatible class set in
equation~\eqref{eq:signature-class-set}. If
\[
\gamma(s):=
\min_{R\notin\mathcal R(s)}D_R(s)>0
\]
whenever the complement of $\mathcal R(s)$ is nonempty, then
\[
\Pr\!\left(\widehat R_T\in\mathcal R(s)\right)\longrightarrow1.
\]
If $\mathcal R(s)=\{R_0\}$, the classifier is consistent for the true dynamical
class: $\Pr(\widehat R_T=R_0)\to1$. If instead $\mathcal R(s)=\{I,S\}$,
the conclusion retains the pair without identifying a unique member.
A consistently estimated additional coordinate that separates the true signature
from the rival regime by a positive gap gives class consistency for the enlarged signature.
For either signature, almost-sure convergence of its estimator makes the corresponding
class-membership conclusion hold eventually almost surely.
\end{corollary}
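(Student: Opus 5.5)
The argument rests on one deterministic fact: each distance function $D_R$ is $1$-Lipschitz in the norm $\|\cdot\|_a$. For any $x,y\in\mathbb R^m$ and any $u\in\mathcal S_R$, the triangle inequality gives $\|x-u\|_a\le\|x-y\|_a+\|y-u\|_a$. Taking the infimum over $u$ and then exchanging the roles of $x$ and $y$ yields
\[
|D_R(x)-D_R(y)|\le\|x-y\|_a .
\]
I will then use this to show that the classifier's selection is correct on a high-probability event.

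\textbf{Population side.} For $R\in\mathcal R(s)$ we have $s\in\mathcal S_R$, so $D_R(s)=0$. For $R\notin\mathcal R(s)$ we have $D_R(s)\ge\gamma(s)>0$. This uses the stated gap hypothesis, which is a minimum over the finite set $\mathcal R$.

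\textbf{Sample side.} Define the event $E_T=\{\|\widehat s_T-s\|_a<\gamma(s)/2\}$. On $E_T$ the Lipschitz bound gives
\[
D_R(\widehat s_T)<\gamma(s)/2\quad\text{for } R\in\mathcal R(s),\qquad
D_R(\widehat s_T)>\gamma(s)/2\quad\text{for } R\notin\mathcal R(s).
\]
Any minimizer of $R\mapsto D_R(\widehat s_T)$ therefore lies in $\mathcal R(s)$. The set $\mathcal R(s)$ is nonempty, since $R_0\in\mathcal R(s)$, so the argmin is nonempty and well defined over the finite index set.

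\textbf{Convergence in probability.} All coordinate scales $a_j$ are positive and fixed, so the norm $\|\cdot\|_a$ is equivalent to the Euclidean norm. Hence $\widehat s_T\to s$ in probability implies $\Pr(E_T)\to1$, and therefore
\[
\Pr\bigl(\widehat R_T\in\mathcal R(s)\bigr)\ge\Pr(E_T)\to1 .
\]
If the complement of $\mathcal R(s)$ is empty, the conclusion is trivial.

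\textbf{Special cases.} The singleton case $\mathcal R(s)=\{R_0\}$ gives class consistency directly. The case $\mathcal R(s)=\{I,S\}$ gives only membership in the pair. Which of the two the argmin selects depends on the tie-breaking and the sampling noise, so no selection of a unique member follows.

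\textbf{Enlarged signature.} Suppose an additional coordinate is appended with its own positive scale. The new norm is a maximum that dominates the old coordinates, and the added coordinate separates $s$ from the rival set by a positive gap. The enlarged distance from $s$ to the rival's enlarged signature set is then positive, and the enlarged compatible set shrinks to $\{R_0\}$. Applying the same argument to the enlarged signature gives consistency.

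\textbf{Almost-sure version.} On the probability-one event where $\widehat s_T\to s$, there is a random $T_0$ with $\|\widehat s_T-s\|_a<\gamma(s)/2$ for all $T\ge T_0$. On that event the inclusion $\widehat R_T\in\mathcal R(s)$ holds for all $T\ge T_0$, which is the eventual almost-sure statement.

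\textbf{Main obstacle.} The enlarged-signature step is the one that needs care. A positive gap in one coordinate must yield a positive infimum over the enlarged rival set. This holds because the added coordinate's contribution is bounded below uniformly over that set, rather than only pointwise. I would state this uniformity explicitly as the meaning of ``separates by a positive gap.'' Measurability of the argmin selection is assumed in the statement, so it needs no further argument.
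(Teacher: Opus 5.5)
Your proposal is correct and follows essentially the same argument as the paper, which writes $Q_T(R)=-D_R(\widehat s_T)$ as an extremum criterion, bounds the uniform criterion error by $\|\widehat s_T-s\|_a$ via the $1$-Lipschitz property of distance to a nonempty set, and concludes that once this error is below $\gamma(s)/2$ every compatible class beats every excluded one (exactly your event $E_T$). Your treatment of the enlarged signature adds two points the paper compresses into ``the same argument applies'': the new coordinate's gap must hold uniformly over the rival set, and the max-norm dominates the old coordinates, so rivals already excluded stay excluded.
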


The proof is given in Appendix~\ref{proof:classconsistency}.

A longer or more precise record helps when it measures a genuine difference between mechanisms.
If two classes predict the same selected features, measuring those features more accurately
still leaves both possible. Corollary~\ref{cor:classconsistency} formalizes this distinction:
with a positive gap to excluded regimes, convergent estimates recover the class or remain
within the compatible class set. A further feature can separate a shared signature. For example,
a consistently estimated rising-gain impulse response can distinguish inert and stabilized
motion when event-based moments are uninformative, provided the enlarged signature has a
positive gap.

That extension must still address every admitted rival. The first reduction requires positive
inert excess kurtosis and two nonzero independent channels in (C). Admitting a purely body-driven
object requires another justified contrast to exclude it; O4b alone separates only its specified
inert and stabilized references. Weak correction or a small body-variance share can also make
a population difference arbitrarily small. Enlarging the forcing or observation family may
therefore require either better measurement of an existing difference or another identifying
feature. The sufficient finite-resolution bound in Corollary~\ref{cor:joint}(iii) guarantees
unique compatibility; a particular record can still have a single compatible class when that
bound is unmet.

\subsection{Illustrative signatures and screening}
\label{sec:illustration}

The formal results imply observable differences in how the reference regimes respond to disturbances.
Figure~\ref{fig:regimes} shows those differences at one illustrative operating point.

\noindent\begin{minipage}{\textwidth}
	\captionsetup{type=figure}
	\centering
	\includegraphics[width=\linewidth]{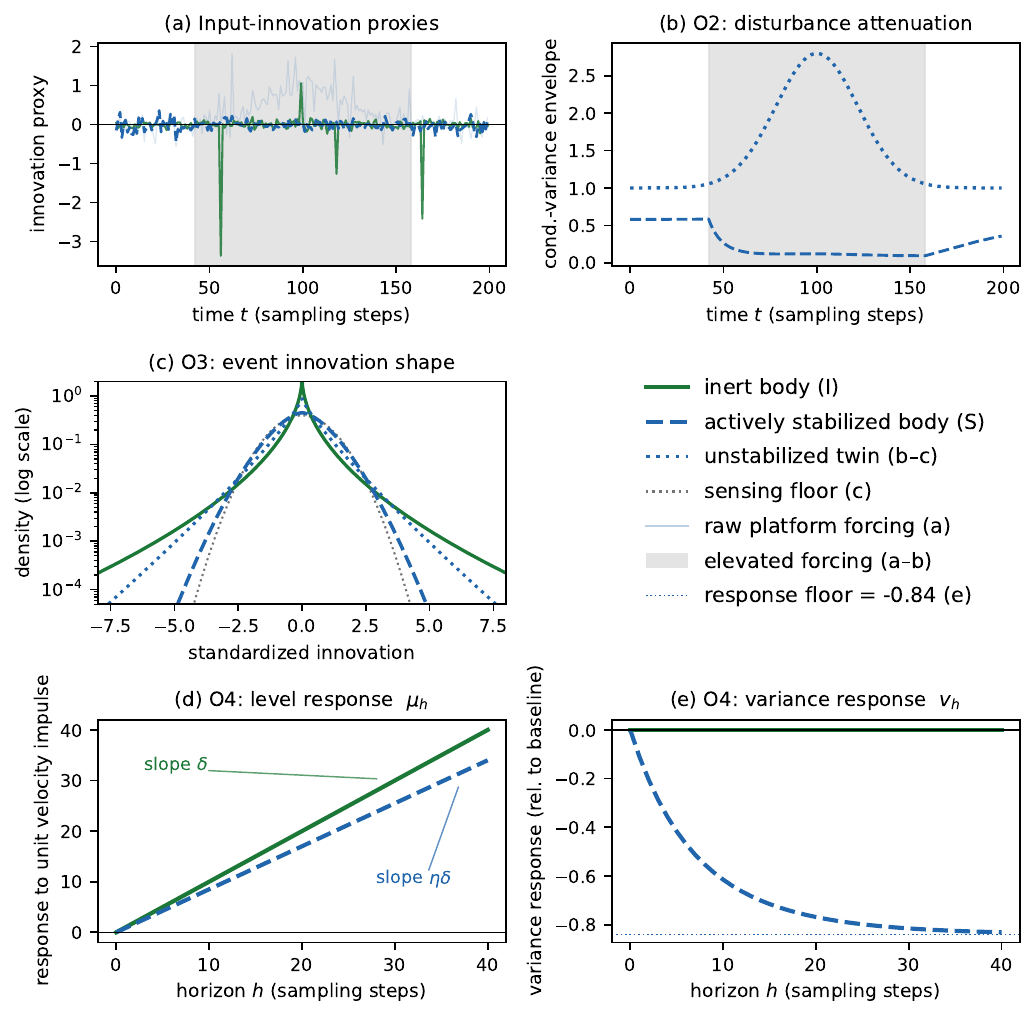}
	\caption{\textbf{Illustrative inert and stabilized reference models.}
		Green denotes inert, blue stabilized, with the unstabilized twin and sensing floor where indicated.  
		(a) Centered input-innovation draws used as illustrative forecast-residual proxies under time-varying disturbance scales; the pale forcing curve also contains a deterministic mean. O1 applies to the output of a specified observer.  
		(b) Conditional variance attenuation relative to the same forcing's twin.  
		(c) Standardized generalized-error densities illustrating conditional shape; the stabilized curve matches the excess kurtosis implied by the share identity with a Gaussian sensing floor.  
		(d) Local retained-impulse displacement ramps and (e) variance responses with covariance held fixed before recovery scaling, as in Proposition~\ref{prop:o4}. Parameters are $\eta=\figEta$, $K_{\max}=\figKmax$, $\tau=\figTau$, and $\lambda=\figLambda$, with $\tau$ the gain timescale in sampling steps.  
		The stochastic panels use fixed seeds; (c)--(e) use the stated analytic laws.}
	\label{fig:regimes}
\end{minipage}
\par\medskip

\pagebreak

The panels collect the physical contrasts behind identification. O1 supplies the shared
forecast-error regularity assumed for the inert and stabilized models. Under the stated
comparison laws, correction attenuates the additional variability that stronger forcing
produces in an uncorrected body (O2). The interaction between correction and sensing noise
can also change conditional shape (O3). After an isolated impulse, the inert and stabilized
references can share a smooth displaced course (O4a), while the rising-gain stabilizer
increasingly suppresses new velocity fluctuations (O4b). Impulsively forced motion admits a
wider range of post-disturbance responses, depending on the timing and strength of later
body-generated inputs (Remark~\ref{rem:cometary}).

Screening compares a trajectory's estimated signature with the full signature sets admitted
by the candidate regimes. Corollary~\ref{cor:classconsistency} establishes consistency of
nearest-signature classification when the true signature is separated from the rival sets
and the signature estimate converges. Corollary~\ref{cor:joint}(iii) gives the corresponding
sufficient separation condition at a stated estimation tolerance. The physical comparisons
in parts~(i)--(ii) organize the resolved reduction as
\[
\{I,C,S\}
\;\xrightarrow{\text{joint regularity and moment evidence}}\;
\{I,S\}
\;\xrightarrow{\text{resolved correction contrasts}}\;
\{I\}\ \text{or}\ \{S\}.
\]
Regularity alone leaves the impulsive alternative open; the first reduction requires the
joint moment evidence. The second separates transmitted from suppressed disturbances using
the comparisons appropriate to the admitted correction laws. Unresolved contrasts leave
the corresponding classes together.

The general screening rule follows directly from the signature-set formulation:

\begin{center}
	\fbox{\begin{minipage}{\dimexpr\linewidth-2\fboxsep-2\fboxrule\relax}
			\textbf{Example algorithm: signature-based screening.}
			\begin{enumerate}[label=\arabic*.,leftmargin=*]
				\item Specify the candidate regimes and their forcing, observation, and reference-comparison laws,
				and construct the corresponding signature sets $\mathcal S_R$ from the available contrasts.
				\item Estimate the observed signature $\widehat s$ and calibrate its joint error radius
				$\varepsilon$, including measurement uncertainty and uncertainty from reference fitting
				and event selection.
				\item Evaluate $D_R(\widehat s)$ for each candidate regime and retain every regime satisfying
				$D_R(\widehat s)\le\varepsilon$.
			\end{enumerate}
	\end{minipage}}
\end{center}

A single retained regime gives a unique compatible class under the stated comparison.
Several retained regimes define the alternatives that further observations must distinguish.
An empty set calls for review of the admitted models and uncertainty calibration.
This general rule admits different computational implementations; the reduction structure
of the present three-class problem suggests a constrained classification tree that organizes
the comparisons in Step~3.

The theory identifies the relevant features, class partitions, and directions of comparison.
Numerical thresholds depend on the observing program. They could be calibrated from the
observation model or learned from a labelled corpus of independently classified objects,
while preserving the reduction order above. Training, calibration, and evaluation would
use separate objects, with transfer to new instruments or environments validated independently.
Appendix~\ref{app:empirical} collects the benchmark, event-selection, and measurement
requirements for this calibration.

\Needspace{20\baselineskip}
\refstepcounter{algorithm}\label{alg:classification}
\begin{center}
	\fbox{\begin{minipage}{\dimexpr\linewidth-2\fboxsep-2\fboxrule\relax}
			\textbf{Algorithm~\thealgorithm. Illustrative theory-guided screening tree.}
			\begin{enumerate}[label=\arabic*.,leftmargin=*]
				
				\item \emph{Establish and calibrate the comparisons.}
				Specify the observer, matched forcing benchmarks, and measurement law. Assess O1 regularity
				and, where available, the O4a impulse comparison. Estimate the available contrasts in
				$\widehat s$ and calibrate their joint error radius $\varepsilon$, including uncertainty in
				the reference comparisons. Fix any learned thresholds before evaluating the target object.
				
				\item \emph{Reduce the impulsive alternative.}
				Under the hypotheses of Corollary~\ref{cor:joint}(i), the admitted (C) models have
				$\kappa_C<\kappa_I$ and $d_{2,C}=d_{3,C}=0$. Use the joint O1--O3 evidence to exclude
				(C) when its allowed shape and attenuation pattern is incompatible with the observed
				signature at the calibrated tolerance; otherwise retain it.
				
				\item \emph{Separate the inert and stabilized candidates.}
				Use the O2, O3, or O4b suppression contrasts of Corollary~\ref{cor:joint}(ii), including
				the matched variance-level comparison for constant gain. Retain (I) until its
				zero-suppression pattern is excluded, and retain (S) until all its admitted correction
				signatures are excluded. Where the available contrasts do not resolve the pair,
				retain both candidates.
				
				\item \emph{Return the jointly compatible classes.}
				Confirm the branch decisions against every regime's full joint signature set, requiring
				each candidate to account for the accumulated features under a common admissible
				specification. Retain exactly the regimes with $D_R(\widehat s)\le\varepsilon$.
				Report a class when it is the sole survivor; otherwise report the compatible set,
				or flag model and calibration review if the set is empty.
				
			\end{enumerate}
	\end{minipage}}
\end{center}

The branches follow the reduction in Corollary~\ref{cor:joint}, and the final distance
check preserves the joint compatibility criterion. Statistical tests, confidence regions,
or learned thresholds can guide the branch decisions. Their calibration must account
for dependence among diagnostics and the sequence of comparisons. The corollary's
resolvability guarantee applies under its stated joint separation and error-bound conditions.

The Monte Carlo experiments in Appendix~\ref{app:mc} show where these component
comparisons lose resolution. In the simulated designs, outliers distort regularity-related
statistics, short arcs can produce wrong-sign kurtosis gaps, and pooling changing gains
can obscure conditional shape. The O4b comparison is sensitive to measurement noise,
environmental drift, and event selection; calibration must reproduce the event-selection
procedure actually used. Extending the arc adds limited information when the post-onset
response window stays fixed, whereas constant-gain discrimination improves with longer
records in the matched variance-level simulations. Applying the latter comparison also
requires a credible ballistic benchmark. These experiments assess the constituent
diagnostic comparisons. Evaluation of the complete classifier, including any learned
thresholds, would additionally track false exclusions, unresolved branches, and final
class assignments on held-out objects. The observed sensitivities identify where a
screening application would benefit from better-matched references, stronger calibration,
or additional observations.

\section{Discussion and conclusion}
\label{sec:discussion}

An acceleration anomaly reveals a departure from a specified trajectory, but several physical
mechanisms can account for the same departure. The question is whether their responses to
disturbances differ enough to identify the mechanism. Within the reference families considered
here, comparing those responses under common forcing and observation conditions gives a joint
signature that can identify the dynamical class, even when individual forcing strengths and
correction settings remain undetermined. Corollary~\ref{cor:joint} establishes the separating
comparisons and their finite-resolution guarantee; Corollary~\ref{cor:classconsistency} shows
how consistent signature estimation recovers the class when the population gap is positive.

Studies of 3I/ATLAS illustrate the physical information available from different observations.
Combining an astrometric upper bound on acceleration with inferred outflow properties gives
model-dependent lower bounds on nucleus mass and diameter \citep{cloete2025limits}. Jet
morphology and photometry constrain rotation, with a small period discrepancy plausibly
attributable to systematics and aliasing \citep{scarmato2026rotation}. Sublimating-grain models
account for the sunward antitail and evolving coma brightness
\citep{keto2025antitails,keto2025icecoma}. Linking jet geometry and outflow estimates to
acceleration supplies a further physical constraint \citep{classify2604}. Each comparison
constrains part of a proposed explanation: the body's properties, its activity, or the recoil
that activity produces.

A fit to one of these features establishes compatibility with that feature; identification
requires separating the joint predictions of competing mechanisms. Models may reproduce similar
average acceleration while predicting different fluctuations or responses to an impulse. Applying the present criterion to
3I/ATLAS would require those joint predictions, their observation laws, and calibrated uncertainty
for the actual record. A technological interpretation, such as that proposed by
\citet{hibberd2025technology}, requires the same connection between its proposed mechanism and
observable behavior. Specifying an origin alone supplies no correction law. The distinction to
be tested is how a proposed mechanism transmits, adds, or offsets disturbances, with independent
physical evidence constraining which mechanisms are plausible.

What position data can establish therefore depends on the reference family
\citep{rothenberg1971identification}. An inert classification concerns propagation without
pulsed body inputs or correction at the diagnostic scale; a spacecraft coasting without
correction can fall within that class. Composition and origin require further physical evidence.
The forcing and correction restrictions also need justification for the particular object.
A smooth radiation-pressure law describes mean forcing but leaves its variability to be
specified, and continuing outgassing requires a law for later inputs beyond an isolated impulse.
Nonlinear or time-varying dynamics may change the comparisons. Alternative distributions retain
the moment-based contrasts when they satisfy the relevant independence, moment, and covariance
conditions; resemblance to an illustrative curve is insufficient. Table~\ref{tab:mimics}
identifies alternative mechanisms and observations that could help distinguish them.

The observing process can further obscure a physical difference between mechanisms. Optical
astrometry measures sky-plane angles, so the comparison must account for projection, range
uncertainty, and the components that the record constrains. For scale, the ground-based precision
range of 0.1--1 arcsecond used here \citep{carpino2003errors,veres2017statistical} corresponds,
at one astronomical unit, to approximately 73--725 km in the sky plane. These single-measurement
scales are not absolute limits on inference from the full record: repetition and a well-characterized
error law can distinguish signals that are weak in individual observations. Dependence and
uncertain reference models can limit that gain. Better
post-event astrometry can itself create apparent quieting. Irregular cadence, reference fitting,
and event selection must therefore enter the calibration, so that an estimated suppression can
be distinguished from a change in how the body was observed
(Appendix~\ref{app:empirical}).

Even precise observations need a defensible uncorrected comparison. An unstabilized twin
describes how the same forcing would move the object without correction; a different body's
record can approximate it only after differences in physical coupling, environment, and
observation have been accounted for. A validated ballistic cohort may support a variance-level
comparison, but an unusually quiet candidate is not evidence of correction merely because the
other objects are more variable. The isolated-impulse comparison instead uses a continuation of
the candidate's own history. It avoids that cross-object comparison, but requires an identified
disturbance and a justified model for the environment afterward. The useful diagnostic is thus
determined partly by the available reference information, rather than by the trajectory's length
or apparent quietness alone.

The simulations in Appendix~\ref{app:mc} show how these differences in comparison design affect
observing strategy. Extending the total arc offers limited improvement for the event statistic
when its post-onset response window remains fixed. A matched variance-level comparison can use
observations throughout the record and benefits from a longer arc in the illustrative experiments.
For an observing program, these results motivate attention to when measurements are collected
and which comparison they inform, alongside their total number. Measurement noise reduces
sensitivity, and an estimated excess-kurtosis contrast can have the wrong sign on an individual
arc. Its uncertainty needs assessment jointly with the other evidence. The reported rejection
frequencies describe the simulated laws; object-specific calibration determines how informative
the corresponding comparison is for an actual record.

Combining trajectory analysis with independent physical measurements can constrain alternatives
that positions alone leave unresolved. Photometry and spectroscopy restrict outgassing and
composition \citep{mumma2011composition,jewitt2022interstellar}. Orbit determination extracts
mean-motion evidence through Marsden-type parameters \citep{marsden1973nongrav}, detections of
non-gravitational acceleration \citep{micheli2018oumuamua}, and residual error models
\citep{carpino2003errors,veres2017statistical}. The disturbance-response comparisons add
restrictions on how a proposed mechanism transmits or offsets fluctuations. In a joint analysis,
inferred outflow and jet geometry could restrict the strengths and directions of body-generated
forcing, while the trajectory tests whether those inputs account for the measured mean motion
and variability. Uncertainty in the physical relations remains part of the predicted signature
sets. The gain from combining observations is that the same physical explanation must satisfy
all of them.

Independent physical information can also constrain a proposed correction mechanism. Once the
disturbance, mass, and command duration are specified, the actuator's reachable set limits the
correction it can supply (Appendix~\ref{app:authority}). Feasibility within that set establishes
what is possible; the disturbance response provides evidence about its use. Finite saturation
changes the moment law when the requested correction exceeds the available command. When
hardware and policy are unknown, the comparison must admit the corresponding range of correction
laws. Physical constraints and trajectory statistics then test different parts of the explanation:
whether the proposed correction could be executed and whether its predicted response agrees
with the record.

A classification supported by this combined evidence can complement frameworks for deciding
which objects merit further investigation. The Loeb Scale assigns significance levels and
response protocols \citep{eldadi2025scale}, and its evolving formulation updates an effective
score as observations accumulate \citep{trivedi2025evolving}. \citet{szocik2026threshold}
interpret candidate-technosignature status as a reason to intensify investigation while origin
remains unresolved. An object can warrant that attention even when several physical explanations
are compatible. The compatible class set adds a different kind of information: it specifies
which explanations remain and which predicted responses would distinguish them. A follow-up
program can use the priority ranking to decide which objects to observe and the compatible
class set to decide which differences those observations should test.

The example algorithm in Section~\ref{sec:illustration} provides one way to connect that
information to observing choices. A proposed observation can be assessed by the difference it
would resolve between the remaining predictions, at the precision and cadence expected. The
proposal to observe 3I/ATLAS more closely with Juno \citep{loeb2025juno} illustrates the type of
follow-up for which this question matters. Coordinated characterization and targeted observation
programs \citep{eldadi2025global,trivedi2026network} could use the same comparison when allocating
further astrometry, activity measurements, or spectroscopy.

The remaining ambiguity determines which evidence is needed. More precise trajectory
measurements can resolve a difference already predicted by competing mechanisms. Where those
mechanisms share the selected trajectory features, activity or composition measurements can
provide another identifying restriction. A joint analysis can thus recover the dynamical class
where the evidence separates it and direct further observations toward the physical questions
that remain.

\section*{Data and code availability}

The numerical experiments use synthetic data; no observational or experimental data were
generated in this study. Observational results cited for motivation and the astrometric-precision
scale in Section~\ref{sec:discussion} come from the published sources identified in the text.
Code to reproduce the results and figures is archived on Zenodo at
\url{https://doi.org/10.5281/zenodo.22876267}.

\clearpage
\appendix

\section{Notation}
\label{app:notation}

Table~\ref{tab:notation} lists the displacement residuals, velocity innovations, correction
parameters, and identification quantities used throughout the analysis.

\noindent\begin{minipage}{\textwidth}
\captionsetup{type=table}
\centering\small\singlespacing
\begin{threeparttable}
\caption{\textbf{Recurring notation.}}
\label{tab:notation}
\begin{tabularx}{\textwidth}{l >{\raggedright\arraybackslash}X l >{\raggedright\arraybackslash}X}
\toprule
Symbol & Meaning & Symbol & Meaning \\
\midrule
$\mathcal R,\ R_0$ & candidate regimes; true regime & $\theta,\ \Theta_R$ & nuisance parameters; their regime-specific space \\
$s,\ \widehat s_T$ & population signature; its estimate & $\mathcal S_R$ & signatures admitted by regime $R$ \\
$\mathcal R(s)$ & regimes compatible with signature $s$ & $\varepsilon$ & calibrated signature error radius \\
$\mathbf{e}_t,\ \mathbf y_t$ & spatial residual; measured residual & $\lambda$ & within-step sensed-cancellation share, $\lambda\in[0,1]$ \\
$\mathbf{r}_t$ & input to the recovery stage, eq.~\eqref{eq:withinstep} & $t_0$ & disturbance (impulse) onset time \\
$\bm{\zeta}_t$ & velocity innovation before correction & $K_t,\ K_{\max}$ & predictable rejection gain; its ceiling \\
$\bm{\nu}_t$ & loop-injected velocity noise & $\tau$ & gain timescale; also a return horizon \\
$W_t$ & predictable forcing-environment index & $\eta$ & retained test-impulse fraction, scalar; distinct from $\bm{\eta}_t$ \\
$g(W_t),\,G(W_t)$ & inert input covariance $C+G(W_t)$; its variable part & $\delta$ & magnitude of the test velocity impulse at $t_0$ \\
$H_t$ & velocity-innovation conditional covariance & $u_0,\ u_{\max}$ & thrust operating point; its ceiling \\
$\Sigma$ & quiet covariance before recovery scaling & $\varrho(u_0)$ & inradius for independent thrust deviations \\
$\mu_h,\ v_h$ & level and conditional-variance responses & $\bm{\varepsilon}_t,\ \bm{\eta}_t$ & corrected velocity innovation, eqs.~\eqref{eq:withinstep}--\eqref{eq:authority} \\
$\kappa$ & scalar standardized excess kurtosis & $\chi$ & reference tidal indicator, $\sup\sqrt{\mu/r^3}\,T_{\mathrm{obs}}$ \\
$\mathbf{q}_t,\ \Phi$ & centered forecast residual; lag-one projection & $\mathbf{s}_t,\ \mathbf{o}_t$ & length-valued forcing and correction contributions \\
$\fext_t,\ \fbody_t$ & displacement contributions of smooth-mean and pulsed inputs & $R_t,\ \Mcal$ & measurement covariance; optional input-adjustment block \\
\bottomrule
\end{tabularx}
\begin{tablenotes}[flushleft]\footnotesize
\item Notes: regimes are (I) inert, (C) impulsively forced, (S) stabilized; the two uncorrected regimes
(I) and (C) have no corrective offset $\mathbf{o}_t$, unlike the actively
corrected (S). Estimators carry hats
($\hat v_h$); event-window increments carry $\Delta$ ($\Delta H$, $\Delta\kappa$).
\end{tablenotes}
\end{threeparttable}
\end{minipage}
\par\medskip

\clearpage
\section{Long-horizon trajectory classes}
\label{app:longhorizon}

A retained velocity impulse initially produces a displacement ramp, but gravity bends both the
perturbed and reference trajectories. An apparent slowing, crossing, or return can therefore
arise without corrective thrust. Assessing steering requires the uncorrected motion expected
from the same initial impulse. In the two-body reference model, orbital energy determines that
continuation. For a bound reference orbit, let $a$ be the semimajor axis, $e$ the eccentricity,
and $n=\sqrt{\mu/a^3}$ the mean motion. An impulse that changes the period produces accumulating
phase error in the linearized displacement, even though two exact bound trajectories remain
within finite radii. Hyperbolic trajectories instead approach constant outgoing velocities, whose
difference determines their leading relative motion. At the parabolic boundary, the relative
growth rate depends on which terms remain after subtracting the two trajectories.

Corollary~\ref{cor:classnull} gives these uncorrected displacement laws and the short-horizon
conditions for a local ramp. Assessing return or steering requires this propagated comparison,
with the gravity model and its uncertainty specified for the observed arc.

\Needspace{5\baselineskip}
\begin{corollary}[Impulse response without correction by trajectory class]
\label{cor:classnull}
Compare a reference trajectory and a trajectory receiving a single velocity impulse
$\bm\delta$ at the same initial position, with no later non-gravitational input. Keep the
pre-impulse reference fixed. For gravity-dominated comparisons assume Newtonian two-body motion
with $\mu>0$, nonzero angular momentum and no collision. The instantaneous radial and transverse
directions are respectively $\hat{\mathbf r}=\mathbf r/\|\mathbf r\|$ and
$\hat{\mathbf h}\times\hat{\mathbf r}$, where $\hat{\mathbf h}$ is the reference unit angular-momentum vector;
the transverse direction is not the velocity direction on an eccentric orbit.

\emph{(i) Local free-flight approximation.} The exact double-integrator model has
$\Delta\mathbf r(T)=T\bm\delta$. On a sufficiently short Keplerian window it is the
first-order local approximation, controlled by the gravity-gradient bound and the smallness
of the relative displacement. The dimensionless indicator
$\chi=T_{\mathrm{obs}}\sup_{\mathrm{window}}\sqrt{\mu/r^3}$ is a reference-orbit screening
scale, not a complete error bound for arbitrarily large perturbations.

\emph{(ii) Bound reference, $0\le e<1$.} In first-order variation about the reference ellipse,
$\delta a=(2a^2/\mu)\mathbf v\cdot\bm\delta$ and
$\delta n=-3n\delta a/(2a)$. The cross-track displacement is bounded and periodic. The
in-plane variation is bounded and periodic exactly when $\delta a=0$; otherwise its transverse and radial
secular envelope rates are
\[
R_T=\tfrac32 n|\delta a|\sqrt{\frac{1+e}{1-e}},\qquad
R_R=\tfrac32 n|\delta a|\frac{e}{\sqrt{1-e^2}},\qquad
R_R/R_T=\frac{e}{1+e}.
\]
For $e=0$, $R_R=0$. These are rates in the linearized solution, valid while accumulated
phase error remains small. If both exact post- and pre-impulse trajectories remain bound,
their exact separation is bounded for all time, even when their orbital periods differ.
Exact energy neutrality is $\mathbf v\cdot\bm\delta+\|\bm\delta\|^2/2=0$, not merely
$\mathbf v\cdot\bm\delta=0$.

\emph{(iii) Unbound reference.} If both trajectories escape hyperbolically with outgoing
velocity vectors $\mathbf v_\infty^1,\mathbf v_\infty^0$, then
\[
\Delta\mathbf r(t)=(\mathbf v_\infty^1-\mathbf v_\infty^0)t+O(\log t).
\]
Thus a nonzero difference of outgoing velocities gives linear divergence; when that
difference is zero, the displayed conclusion is only the remainder bound $O(\log t)$.
If one trajectory is hyperbolic and the other is
bound or parabolic, relative displacement divided by $t$ tends to the signed hyperbolic outgoing
velocity and separation grows linearly. For a parabolic reference, an energy-removing impulse
that makes the perturbed trajectory bound leaves its separation from the still-parabolic reference
asymptotic in norm to $(9\mu/2)^{1/3}t^{2/3}$. If both remain parabolic, separation is $O(t^{2/3})$; its leading term vanishes when
the periapsis directions agree, as obtained by subtracting the outgoing expansions in the proof. Bounded separation, a crossing, recurrence and convergence are distinct properties.
\end{corollary}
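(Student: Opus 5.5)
The corollary has four parts, and I would prove each from a standard representation.

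\emph{Part (i).} The free-flight ramp is immediate. Iterating equation~\eqref{eq:jordan} with no later input gives $\Delta\mathbf r(T)=T\bm\delta$ exactly.

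For the Keplerian window I would write the variational equation
\[
\ddot{\Delta\mathbf r}=-\mu\nabla^2(r^{-1})\,\Delta\mathbf r+O(\|\Delta\mathbf r\|^2/r^4).
\]
The gravity-gradient operator has norm at most $2\mu/r^3$. A Gr\"onwall estimate on the integral form then bounds the deviation from $T\bm\delta$ by $T\|\bm\delta\|$ times a factor $O(\chi^2)$, plus the nonlinear term. This justifies the first-order claim while $\chi\ll1$ and the displacement stays small.

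\emph{Part (ii), bound orbits.} I would use Gauss/vis-viva variations.
\begin{itemize}
\item From $v^2=\mu(2/r-1/a)$ at fixed $r$, the energy variation gives $\delta a=(2a^2/\mu)\mathbf v\cdot\bm\delta$.
\item Kepler's third law gives $\delta n=-3n\delta a/(2a)$.
\item The linearized displacement splits into periodic variations of the elements plus the secular term $\delta M=\delta n\,t$ in mean anomaly. The secular displacement is $(\partial\mathbf r/\partial M)\,\delta n\,t$.
\item Since $\partial\mathbf r/\partial M=\dot{\mathbf r}/n$, its radial component is $\dot r/n=ae\sin E/\sqrt{1-e^2}\cdot(\ldots)$ and its transverse component is $r\dot f/n$.
\item Taking suprema over the orbit gives the maxima $a e/\sqrt{1-e^2}$ and $a\sqrt{(1+e)/(1-e)}$ (at periapsis). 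Multiplying by $\tfrac32 n|\delta a|/a$ yields $R_R$ and $R_T$, whose ratio simplifies to $e/(1+e)$.
\item Cross-track motion only involves $\delta i$ and $\delta\Omega$, which enter periodically, so it stays bounded and periodic.
\end{itemize}
Exact boundedness when both orbits are bound follows because each stays within its apoapsis radius, so their separation is bounded by the sum of the two. Exact energy neutrality follows by expanding $\|\mathbf v+\bm\delta\|^2$.

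\emph{Part (iii), unbound orbits.} I would use the asymptotic expansions of Kepler orbits.
\begin{itemize}
\item \emph{Hyperbolic.} The position satisfies $\mathbf r(t)=\mathbf v_\infty t-(\mu/v_\infty^2)\hat{\mathbf v}_\infty\log t+\mathbf c+o(1)$. This follows from the hyperbolic Kepler equation $e\sinh H-H=n t$, which gives $H=\log t+O(1)$. Subtracting the two expansions gives linear growth plus $O(\log t)$.
\item \emph{Mixed case.} If one orbit is hyperbolic and the other bound ($O(1)$) or parabolic ($O(t^{2/3})$), then dividing by $t$ leaves $\mathbf v_\infty$.
\item \emph{Parabolic.} Barker's equation gives $r\sim(9\mu/2)^{1/3}t^{2/3}$ along the periapsis-rotated asymptotic direction. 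Against a bound orbit, whose position is $O(1)$, the separation is therefore asymptotic to that magnitude. For two parabolas, subtracting the expansions leaves a leading term $(9\mu/2)^{1/3}t^{2/3}(\hat{\mathbf u}_1-\hat{\mathbf u}_0)$, which vanishes when the asymptotic (equivalently, periapsis) directions coincide.
\end{itemize}

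\emph{Main obstacle.} I expect part (ii) to be hardest. The difficulty is computing the correct supremum envelopes of the radial and transverse projections of $\partial\mathbf r/\partial M$ on an eccentric orbit. The transverse direction is not the velocity direction, and the result must match the stated constants. I would first verify $e=0$, where $R_R=0$ and $R_T=\tfrac32 n|\delta a|$, and then check the periapsis evaluation in the eccentric-anomaly parametrization.
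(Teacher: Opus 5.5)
Your proposal is correct and follows the paper's proof essentially step by step: an integral-equation (Gr\"onwall/Picard) bound on the gravity-gradient variational equation with $LT^2=2\chi^2$, vis-viva and Kepler's third law for $\delta a$ and $\delta n$, a single secular mean-anomaly term whose radial and transverse coefficients are maximized at $\cos E=e$ and at periapsis, the apoapsis bound for two exact ellipses, and subtraction of the hyperbolic and parabolic (Barker) asymptotic expansions. The one difference is that you get the hyperbolic $O(\log t)$ remainder by inverting $e\sinh H-H=nt$, whereas the paper integrates the $O(t^{-2})$ acceleration twice. Two small slips do not affect the stated conclusions: the log coefficient is $+(\mu/v_\infty^2)\hat{\mathbf v}_\infty$, because the body decelerates toward $v_\infty$ and so runs ahead of $\mathbf v_\infty t$; and the radial coefficient is $ae\sin E/(1-e\cos E)=ae\sin f/\sqrt{1-e^2}$.
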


The proof of Corollary~\ref{cor:classnull} is given in Appendix~\ref{proof:classnull}.
Gravity can bend the displaced course without any corrective command. Two bound orbits can remain within finite separation while their phases differ;
different escape velocities instead produce sustained separation. A crossing or bounded
separation therefore becomes evidence about steering only through comparison with the
uncorrected motion expected from the same impulse. These are statements about the level path,
not the conditional-variance suppression measured by O4b.

The local ramp is useful before differential gravitational curvature materially changes that
level comparison. Let $L$ denote the supremum of the gravity-gradient operator norm along the
reference orbit over the observing window. Remark~\ref{rem:kepler} relates this bound to elapsed
time, giving a screening scale while retaining the requirement of small relative displacement.

\Needspace{5\baselineskip}
\begin{remark}[When the local ramp approximation applies]
\label{rem:kepler}
The dimensionless reference indicator $\chi$ is a screening scale for the local linearized
ramp. For a Kepler reference the operator-norm gravity-gradient bound satisfies
$LT_{\mathrm{obs}}^2=2\chi^2$. A small value controls the linearized remainder; a nonlinear
comparison also needs the perturbed path to remain close to the reference and away from collision.
The supremum records peak tidal strength over the window; its duration is a separate feature of
the orbit.

The local rate $\sqrt{\mu_\odot/r^3}$ is $\chiRateAU\,\mathrm{s^{-1}}$ at 1 au.
Holding radius fixed for a scale comparison gives $\chi=1$ after $\chiDaysOneAU$ days
at 1 au and $\chiDaysOneFiveAU$ days at 1.5 au; these are not exact phases swept on an
eccentric passage. Whether an orbit is bound is determined by energy, not by this indicator.
At intermediate horizons, propagate the uncorrected orbital model with uncertainty rather than
requiring a full period or treating $\chi$ as a binary validity theorem.
\end{remark}

Once the uncorrected continuation is known, return becomes a question about the controller's
chosen reference. Rejecting new disturbances can stabilize motion around a displaced path;
returning to the original path requires additional mean commands. Remark~\ref{rem:return}
separates that steering question from the local variance response, preventing gravitational
recurrence or sustained displacement from being assigned a control interpretation on its own.

\Needspace{5\baselineskip}
\begin{remark}[Long-horizon return]
\label{rem:return}
Long-horizon return toward a reference is a possible property of a specified steering policy,
not a consequence of the local gain and onset-retention assumptions.
Return is assessed by comparison with the propagated initial-condition null and its uncertainties.
Adopting a shifted reference is one possible reason a controller has no return to the old
reference, but is not the only observational ambiguity.
Proposition~\ref{prop:o4} proves a local ramp and a conditional covariance response; it does
not prove a general long-horizon separation between physical classes.
\end{remark}

\clearpage
\section{Actuator authority and post-disturbance response}
\label{app:authority}

The linear correction law assumes that the required opposing command can be executed. A finite
actuator can remove only a limited velocity increment over a given time, and this limit depends
on direction. The coplanar geometry of Figure~\ref{fig:authority} makes that physical constraint
explicit: balanced thrust has zero net force, while thrust deviations generate a bounded set of
directional commands. Comparing the correction required by the stochastic law with this
reachable set distinguishes a feasible command from a response that would require saturation
and a different moment calculation.

\subsection{Reachable sets and realized commands}

Three coplanar directions represent the in-plane authority of the four-thruster
architecture of \citet{andree2026thruster}. Out-of-plane control requires attitude dynamics,
which are outside this static model.
Let $u_0$ be the common baseline thrust and $u_{\max}$ the maximum thrust of each actuator,
with $0\le u_0\le u_{\max}$. For three coplanar unit directions
$\mathbf d_1,\mathbf d_2,\mathbf d_3$ at $120^\circ$, $\sum_i\mathbf d_i=0$.
Write $\Pi=\operatorname{span}\{\mathbf d_1,\mathbf d_2,\mathbf d_3\}$ for their plane. The full unipolar set
$\{\sum_i u_i\mathbf d_i:0\le u_i\le u_{\max}\}$ is independent of the nominal common
baseline. With symmetric independent deviations $|d_i|\le a$ about that baseline, where
$a=\min\{u_0,u_{\max}-u_0\}$, the set
\[
\mathcal U_a=\left\{\sum_{i=1}^3d_i\mathbf d_i:|d_i|\le a\right\}
\]
is, for $a>0$, the regular hexagon with vertices $\pm2a\mathbf d_i$.
A linear image of the coefficient cube is the convex hull of its eight vertex images;
two images are zero and the other six are $\pm2a\mathbf d_i$.
For $a=0$ the set is $\{0\}$. Adjacent nondegenerate vertices have central angle $\pi/3$, so the inradius relative to $\Pi$ and the maximum norm are
\[
\varrho(u_0)=2a\cos(\pi/6)=\sqrt3 a,\qquad
\max_{u\in\mathcal U_a}\|u\|=2a.
\]
This calculation permits independent deviations. If the additional constraint
$\sum_i d_i=0$ of the constant-total-thrust construction in \citet{andree2026thruster}
is imposed, the coefficient-section vertices are permutations of $(a,0,-a)$; their images
have norm $\sqrt3 a$, and the inradius is $3a/2$ instead. The two constraints therefore give different reachable sets. A policy determines the mean
command and realized thrust reductions; symmetry of the feasible set alone determines neither.

To apply this force constraint to the trajectory model, the required correction must be
expressed in velocity units. The controller removes the difference between the uncorrected
innovation and the innovation remaining after both correction stages. For the linear law,
that removed increment is
\[
\mathbf a_t=\bm\zeta_t-(1-K_t)[(1-\lambda)\bm\zeta_t+\lambda\bm\nu_t]
=[\lambda+K_t(1-\lambda)]\bm\zeta_t-(1-K_t)\lambda\bm\nu_t.
\]
For constant mass $m$ and an idealized thrust held for duration $\Delta t$, the feasible
velocity-increment set is $(\Delta t/m)\mathcal U_a$.
With a simultaneous predictable additive velocity increment $\mathbf b_t$, the net applied
increment is $\mathbf b_t-\mathbf a_t$. Since $\mathcal U_a$ is symmetric, feasibility
requires $\mathbf a_t-\mathbf b_t\in(\Delta t/m)\mathcal U_a$.
Provided $\mathbf a_t-\mathbf b_t\in\Pi$, a sufficient condition in every direction within that plane is
\[
[\lambda+K_t(1-\lambda)]\|\bm\zeta_t\|
+(1-K_t)\lambda\|\bm\nu_t\|+\|\mathbf b_t\|
\le(\Delta t/m)\varrho(u_0).
\]
Other command durations or varying mass require the appropriate integrated-force map.
The constraint concerns realized commands. Moments under saturation or truncation follow
from the resulting joint law; observing one feasible command does not establish the unclipped
moment identities.

Rotation-mediated out-of-plane control extends the static geometry to three dimensions
and requires additional attitude dynamics. Matrix gains require corresponding covariance conditions. For a matrix gain, covariance attenuation requires
$(I-K)\Sigma(I-K)'\preceq\Sigma$ (or the corresponding trace inequality for a trace claim).
A scalar predictable gain still requires current sensing to implement its command.
Astrometric measurement error belongs to the separate observer model.

\subsection{Operational implications}

Feasible cancellation constrains what the controller can remove, not which trajectory it intends
to follow afterward. Thrust bounds limit command magnitude rather than its rate of change; the
directional boundary gives the maximum correction, while the inradius guarantees feasibility
in every in-plane direction. Even within those bounds, a controller may adopt the displaced
state and reject subsequent disturbances around its new continuation. Remark~\ref{rem:adoption}
explains why absence of return to the old reference can coexist with a variance response.

\Needspace{5\baselineskip}
\begin{remark}[Reference adoption and subsequent variance suppression]
\label{rem:adoption}
An additional policy could adopt the post-impulse state as a new reference and subsequently
follow its uncorrected continuation. This is a possible extension, not the long-horizon conclusion
of Assumption~\ref{ass:A3}(b), which specifies only onset retention and a local no-steering window.
If the same-history fixed-covariance gain law is also retained, its variance response is still
that of Proposition~\ref{prop:o4}(ii). Neither the absence of visible return nor onset damping
alone identifies this policy.

A deterministic translation leaves the covariance of a given random vector unchanged.
Changing or estimating a reference can change the observer and selected residual law, so a
refitted trajectory does not inherit the variance identity automatically.
Relating onset retention quantitatively to later variance suppression requires a specified
policy, mass, command duration, input information, and resource model.
The inradius supplies sufficient feasibility for commands within the actuator plane,
not the maximum impulse a controller can remove. Interpretation remains conditional on the alternative models and
observation checks.
\end{remark}

A body-generated pulse can leave the same initial velocity change without any controller.
If activity continues, its subsequent forcing can also change the variance response. Thus the
inert--stabilized impulse comparison must be accompanied by a restriction separating body
activity before it supports three-regime identification. Remark~\ref{rem:cometary} identifies
which properties remain shared with the impulsive class and which require its own input law.

\Needspace{5\baselineskip}
\begin{remark}[The impulsively forced case]
\label{rem:cometary}
An impulsively forced model is its own twin. Its positive event variance increment
is imposed in Proposition~\ref{prop:o2o3}, not inferred from the word impulsive.
Component cumulants and variance shares determine how its kurtosis changes with pulse amplitude. Its forecast residual can be stationary, and its isolated
uncorrected velocity impulse can have the same local ramp as an external impulse.
With continuing body inputs, the later response depends on their law and is not covered by
the isolated-impulse Keplerian corollary. No universal C-versus-S separation follows from a
particular response shape or tail description.
\end{remark}

An actuator capable of a command need not be using it. The reachable-set calculation tests
feasibility; the disturbance response tests what the object actually does. A controller can
accept a shifted course (Remark~\ref{rem:adoption}), while continued body activity can change
the motion without control (Remark~\ref{rem:cometary}). Feasibility narrows the possible commands;
Corollary~\ref{cor:joint} identifies a regime when the joint responses separate the admitted
alternatives.

\clearpage
\section{Proofs and supporting derivations}
\label{app:framework}

\subsection{Proof of Lemma~\ref{lem:zeroresidual}: common zero-residual limit}
\label{proof:zeroresidual}

\begin{proof}
The first two conclusions follow by substituting into equation~\eqref{eq:resid}.
The bounded observation maps give
\[
\|P\|:=\max_{t\in\mathcal I}\|P_t\|_{\mathrm{op}}<\infty,
\]
and hence
\[
\|\mathbf y^{(n)}-\mathbf n^{\mathrm{obs}}\|_{\mathcal I}
\le\|P\|\,\|\mathbf e^{(n)}\|_{\mathcal I}\to0.
\]
At either exact endpoint $\mathbf y=\mathbf n^{\mathrm{obs}}$, which proves equality of the observation laws.
\end{proof}

\subsection{Proof of Proposition~\ref{prop:o1}: lag-one projection}
\label{proof:o1}

\begin{proof}
\emph{Projection and covariance identity.} Use the centered process specified in Assumption~\ref{ass:A1},
and put $Q=\E[\mathbf u_t\mathbf u_t']$ for
$\mathbf u_t=\mathbf q_t-\Phi\mathbf q_{t-1}$. The normal equations give
\[
\E[\mathbf u_t\mathbf q_{t-1}']=\Gamma(1)-\Phi\Gamma(0)=0.
\]
Expanding the covariance of $\Phi\mathbf q_{t-1}+\mathbf u_t$ therefore gives
\begin{equation}
\Gamma(0)=\Phi\Gamma(0)\Phi'+Q,\qquad Q\succeq0.
\label{eq:o1lyap}
\end{equation}

\emph{Exclude eigenvalues outside the unit disk.} For any eigenpair
$\Phi'x=z x$, $x\in\mathbb C^d\setminus\{0\}$, apply the Hermitian form to obtain
\[
(1-|z|^2)x^*\Gamma(0)x=x^*Qx\ge0.
\]
Since $\Gamma(0)\succ0$, this implies $|z|\le1$.

\emph{Exclude the boundary using the remote past.} Suppose $|z|=1$. Then
$\E|x^*\mathbf u_t|^2=0$, so $y_t=x^*\mathbf q_t$ satisfies
$y_t=\bar z\,y_{t-1}$ almost surely. For every integer $k\ge1$,
$y_t=\bar z^{\,k}y_{t-k}$ belongs to the complexified closed linear past through $t-k$.
Its real and imaginary parts therefore belong to every real past space. Pure
non-determinism makes both zero, contradicting
$\E|y_t|^2=x^*\Gamma(0)x>0$. Hence $|z|<1$ for every eigenvalue.
The covariance identity alone has a possibly singular decrement $Q$; strictness follows
from this additional remote-past argument, not from a positive-definite Lyapunov decrement.

\emph{Apply the hypotheses to each regime.} Assumption~\ref{ass:A1} supplies these
properties for (I), and Assumption~\ref{ass:A3}(c) for (S). Thus both satisfy $\rho(\Phi)<1$.
\end{proof}

\subsection{Proof of Lemma~\ref{lem:kurtshare}: kurtosis share identity}
\label{proof:kurtshare}

\begin{proof}
\emph{Variance.} Independence and centering give
$s^2:=\operatorname{Var}(R)=(1-\lambda)^2\sigma_Z^2+\lambda^2\sigma_N^2>0$.

\emph{Fourth cumulant.} Expanding the fourth power, the odd cross terms vanish and the even
cross term is $6(1-\lambda)^2\lambda^2\sigma_Z^2\sigma_N^2$. Subtracting $3s^4$ therefore gives
\[
\E R^4-3s^4=(1-\lambda)^4\kappa_Z\sigma_Z^4+
\lambda^4\kappa_N\sigma_N^4.
\]
Division by $s^4$ yields the stated identity. The same calculation holds in each conditional
law satisfying the declared assumptions.
\end{proof}

\subsection{Proof of Proposition~\ref{prop:o2o3}: disturbance-conditioned moment comparisons}
\label{proof:o2o3}

\begin{proof}
\emph{Uncorrected variances.} With correction absent, the residual and its twin coincide.
Thus their conditional variances and every defined shape statistic coincide. The stipulated
positive event increment gives part~(i); in (C), conditional independence adds component
covariances, giving the assumed positive sum of increments.

\emph{Uncorrected shape.} Apply Lemma~\ref{lem:kurtshare} separately in each admissible
conditioning set. Subtracting the quiet weighted sum from the event weighted sum proves the
if-and-only-if condition in part~(ii). Increasing a pulse amplitude changes its variance share; the weighted cumulants determine the
resulting excess kurtosis. Under the separate cross-regime
condition, $\kappa_b\le\kappa_I$, $\kappa_e=\kappa_I\ge0$, and hence
\[
\kappa_C=w_b^2\kappa_b+w_e^2\kappa_I
\le(w_b^2+w_e^2)\kappa_I\le\kappa_I,
\qquad w_b+w_e=1.
\]

\emph{Stabilized variance.} Write $c_t=1-K_t$, $a=(1-\lambda)^2$ and
$b=\lambda^2\operatorname{tr}F_\nu\ge0$. Conditional independence gives
$\operatorname{tr}H_t=c_t^2(aV_t+b)$. Since $c_t=1$ in quiet windows,
\[
\Delta H=a\Delta V-
\E[(1-c_t^2)(aV_t+b)\mid\mathrm{event}]\le a\Delta V.
\]
The subtracted integrand is nonnegative. If $\lambda>0$, then $a<1$ and
$\Delta V>0$ gives strict attenuation. At $\lambda=0$, strictness is exactly the
positive-expectation condition in the statement. When $c_t=c$ is constant in both windows,
the common addend cancels and $\Delta H=c^2a\Delta V$, as asserted.
When the sensing floor obeys the trace bound of Assumption~\ref{ass:A3}(d)---for $\lambda>0$,
$\lambda\operatorname{tr}F_\nu<(2-\lambda)V_t$---we have
\[
aV_t+b<V_t\iff\lambda\operatorname{tr}F_\nu<(2-\lambda)V_t,
\]
and multiplication by $c_t^2\le1$ preserves this level inequality, so $\operatorname{tr}H_t<V_t$;
this is the level ordering that the cross-sectional comparison against a validated ballistic cohort
uses (Appendix~\ref{app:mc}).

\emph{Stabilized conditional shape.} In the scalar law of part~(iii), the lemma gives
$\kappa_R=w^2\kappa_Z$. Positive variances and $0<\lambda<1$ imply $0<w<1$;
with $\kappa_Z>0$ this proves both strict inequalities. At the two endpoints $R$ is,
respectively, the disturbance or the floor. Given the history, multiplication by the known
positive $c_t$ cancels from standardized central moments. No step replaces these conditional
laws by a pooled event law without the additional assumptions stated in the proposition.
\end{proof}

\subsection{Proof of Proposition~\ref{prop:o3inv}: conditional shape invariance and pooling}
\label{proof:o3inv}

\begin{proof}
\emph{Conditional scaling.} In any fixed scalar projection, let $s_t^2>0$ be the
conditional variance of $r_t$. Given $\Fcal_{t-1}$, $c_t>0$ is fixed, so
\[
\frac{c_t r_t-\E[c_t r_t\mid\Fcal_{t-1}]}{
\sqrt{\operatorname{Var}(c_t r_t\mid\Fcal_{t-1})}}
=\frac{r_t-\E[r_t\mid\Fcal_{t-1}]}{s_t}.
\]
Every defined standardized central moment is therefore unchanged. Adding a predictable term
also disappears on centering.

\emph{Pooling scales.} For the separately specified independent $S,Z$,
$\E X^2=\E S^2$ and $\E X^4=(\kappa_Z+3)\E S^4$.
Division yields the formula in the statement; $\operatorname{Var}(S^2)\ge0$ gives its
inequality. For two mixtures with the same standardized law, the formula compares their
respective fourth-to-second scale-moment ratios.

\emph{Vary the cancellation share with the component laws fixed.} The share identity gives
$\kappa_R=w^2\kappa_Z$ and, for $0<\lambda<1$,
\[
w=\left[1+\frac{\lambda^2}{(1-\lambda)^2}
\frac{\sigma_N^2}{\sigma_Z^2}\right]^{-1}.
\]
The positive ratio $\lambda/(1-\lambda)$ is strictly increasing, so $w$ decreases strictly
from $1$ to $0$. Since $\kappa_Z>0$, the stated strict decrease follows, including its
endpoint limits. With a nonzero floor excess the formula is instead
$w^2\kappa_Z+(1-w)^2\kappa_N$; it exceeds the floor, for $w>0$, exactly when
$w\kappa_Z>(2-w)\kappa_N$. This inequality is not automatic for a near-Gaussian floor.
\end{proof}

\subsection{Proof of Proposition~\ref{prop:o4}: local impulse and variance responses}
\label{proof:o4}

\begin{proof}
\emph{Propagate the retained initial velocity.} For $T=h\Delta t$, the stipulated
transition gives
\[
\begin{pmatrix}1&\Delta t\\0&1\end{pmatrix}^{h}
\begin{pmatrix}0\\\eta\delta\end{pmatrix}
=\begin{pmatrix}\eta\delta h\Delta t\\\eta\delta\end{pmatrix}.
\]
Use $\eta=1$ in (I). The stipulated first moments make both continuation positions
integrable at each finite horizon. Conditional centering and the tower property give zero
future innovation means given $\mathcal H_0$. The equal expected predictable inputs therefore
cancel in the mean contrast; Assumption~\ref{ass:A3}(b) excludes additional impulse-induced
mean steering on this horizon.
Thus the displacement entries give part~(i); an unknown input size absorbs the factor $\eta$.
Under gravity these formulas apply on the local horizon specified in the statement.

\emph{Compute the variance difference.} In (I), Assumption~\ref{ass:A2} fixes
$H=g(W_t)$ in both quiet-environment continuations. The common conditional environment
law and the stipulated finite expected trace give equal finite covariance expectations,
so $v_h^I=0$. In (S), the stipulated
covariance before recovery scaling is the same $\Sigma$ in both continuations, whereas their gains are
$K(h)$ and $0$. Hence
\[
v_h^S=\operatorname{tr}[(1-K(h))^2\Sigma]-\operatorname{tr}\Sigma
=-K(h)(2-K(h))\operatorname{tr}\Sigma.
\]

\emph{Check signs and scope.} Since $\operatorname{tr}\Sigma>0$ and $0\le K<1$,
the expression is zero at $K=0$ and negative at $K>0$. Its derivative in $K$ is
$-2(1-K)\operatorname{tr}\Sigma<0$, so the nondecreasing gain path makes the response
nonincreasing. Continuity gives the stated limit for the idealized continuation. Both
conditional covariances remain positive definite: negativity concerns their difference in
trace only. If the two gains instead equal the same constant, their conditional covariances
coincide and the response is zero.
\end{proof}

\subsection{Proof of Corollary~\ref{cor:joint}: identifiable uniqueness}
\label{proof:joint}

\begin{proof}
Each uncorrected regime is its own unstabilized twin, so its defined variance and
shape contrasts are zero. Let $w_b\in(0,1)$ denote the body channel's share of
conditional variance in (C). Lemma~\ref{lem:kurtshare} gives
\[
\begin{split}
\kappa_I-\kappa_C
&=2w_b(1-w_b)\kappa_I+w_b^2(\kappa_I-\kappa_b)\\
&\ge2w_b(1-w_b)\kappa_I>0.
\end{split}
\]
Thus (C) fails all three alternatives in
equation~\eqref{eq:endpoint-reduction}, even if it satisfies O1. The inert model
satisfies O1 and the equality $\kappa_R=\kappa_I$. An (S) model with positive
$d_2$ or $d_3$ satisfies the corresponding alternative. At $\lambda=0$,
Proposition~\ref{prop:o3inv} gives
$\kappa_S=\kappa_S^{\mathrm{pass}}=\kappa_I$ under the stated matching condition,
so that stabilized model satisfies the equality alternative as well.

The strict O2 inequality is Proposition~\ref{prop:o2o3}(iii). The O3 identity
follows from Lemma~\ref{lem:kurtshare}: the independent zero-excess floor
contributes variance but no fourth cumulant, giving $0\le w<1$ and the displayed
strict difference. Positive recovery scaling leaves standardized conditional
moments unchanged. Proposition~\ref{prop:o4} gives the O4a ramps and the O4b
variance contrast, using only the fixed covariance before recovery scaling and gain schedule
for the latter. For constant gain, direct subtraction gives
\[
L_I-L_S
=K(2-K)V+(1-K)^2\lambda\bigl[(2-\lambda)V-\lambda F\bigr],
\]
which is strictly positive under either stated level-comparison condition.
The first stage excludes (C); a strict suppression contrast then excludes (I)
from the retained pair. On the stipulated reference family, (C) fails the first
stage, (I) passes it with zero suppression, and (S) passes it with strict
suppression. These joint outcomes are distinct.

For part~(iii), the detectable-residual condition restricts the admitted family away from the
common observation limit of Lemma~\ref{lem:zeroresidual}. The distance condition is a separate
premise and supplies the bound used here. Since the true signature belongs to $\mathcal S_R$
and its estimation error is at most $\varepsilon$, the true regime is compatible.
For any rival signature $u$, the triangle inequality gives
\[
\|\widehat s-u\|_a\ge\|s-u\|_a-\|\widehat s-s\|_a
\ge\delta_R(s)-\varepsilon>\varepsilon.
\]
Taking the infimum over each rival set preserves the strict lower bound
$\delta_R(s)-\varepsilon>\varepsilon$, excluding every rival regime.
The probabilistic statement follows by applying this argument on the stated error-bound event.
\end{proof}

\subsection{Proof of Corollary~\ref{cor:classconsistency}: class consistency}
\label{proof:classconsistency}

\begin{proof}
Set $Q_T(R)=-D_R(\widehat s_T)$ and $Q(R)=-D_R(s)$. Distance to a nonempty set is
1-Lipschitz, so
\[
\max_{R\in\mathcal R}|Q_T(R)-Q(R)|\le\|\widehat s_T-s\|_a\longrightarrow0
\]
in probability. This gives the uniform criterion convergence in the extremum argument.
If $\mathcal R(s)=\{I,C,S\}$, the membership statement holds for every $T$.
Otherwise, $Q(R)=0$ on $\mathcal R(s)$ and $Q(R)\le-\gamma(s)$ outside it.
When the uniform criterion error is less than $\gamma(s)/2$, every compatible class
has a higher sample criterion than every excluded class. Every maximizer therefore belongs
to $\mathcal R(s)$, giving the probability result. Under almost-sure convergence,
this ordering holds for all sufficiently large $T$ almost surely.
The same argument applies to a consistently estimated enlarged signature with a
positive gap from every rival regime.
\end{proof}

\subsection{Proof of Corollary~\ref{cor:classnull}: orbital impulse responses}
\label{proof:classnull}

Reference convention, orbital energy, initial conditions, response horizon, and observational
uncertainty all matter. A single apparition can already contain appreciable tidal curvature;
multiple apparitions are not a mathematical requirement for evaluating a propagated uncorrected null.
Refitting to post-impulse data can absorb some displacement into estimated elements, depending
on sampling and fit design, so the reference used for the response must be declared.

For an eccentric bound reference, the first-order radial secular envelope has rate
$e/(1+e)$ times the transverse envelope rate when $\delta a\ne0$; the proof uses the
standard linearized relative motion about an elliptic reference orbit
\citep{tschauner1967elliptic,yamanaka2002elliptic,carter1990keplerian}.
These maxima occur at different phases, so their ratio is not a pointwise component ratio.
The linearized rates do not imply unbounded exact separation of two bound ellipses.
For hyperbolic relative motion, the leading linear term is the difference of outgoing velocities.

\begin{proof}
\emph{Free flight and the local remainder.} The double-integrator identity gives
$\Delta\mathbf r(T)=T\bm\delta$. In the first-order gravitational variational equation,
$\ddot{\mathbf d}=A(t)\mathbf d$, $\mathbf d(0)=0$, $\dot{\mathbf d}(0)=\bm\delta$,
put $L=\sup_{[0,T]}\|A(t)\|$. The integral equation is
\[
\mathbf d(t)=t\bm\delta+\int_0^t(t-s)A(s)\mathbf d(s)\,ds.
\]
Picard iteration of its nonnegative norm bound gives
\[
\|\mathbf d(t)\|\le\|\bm\delta\|\sinh(\sqrt L t)/\sqrt L
\]
(with the continuous $L=0$ value). Substituting this bound back into the integral equation gives the
local remainder
\[
\|\mathbf d(t)-t\bm\delta\|\le\|\bm\delta\|\bigl[\sinh(\sqrt L t)/\sqrt L-t\bigr]
=O(\|\bm\delta\|Lt^3)\quad\text{as }Lt^2\to0.
\]
For a Kepler reference, $\|A(t)\|=2\mu/r(t)^3$, so $LT^2=2\chi^2$ on that window.
The nonlinear approximation additionally requires small relative displacement, as stipulated.

\emph{Bound-orbit variation --- secular rate.} At fixed initial radius, differentiating
$-\mu/(2a)=\|\mathbf v\|^2/2-\mu/r$ gives the stated $\delta a$;
differentiating $n=\sqrt{\mu/a^3}$ gives $\delta n=-3n\delta a/(2a)$.
All Kepler elements except mean anomaly are constant, so the only secular first-order term is the
anomaly difference $\delta n(t-t_0)$; the remaining first-order position contributions are periodic.

\emph{Envelope maxima.} For eccentric anomaly $E$ and $r=a(1-e\cos E)$, differentiation at fixed
elements gives
\[
\frac{\partial r}{\partial M}=\frac{ae\sin E}{1-e\cos E},\qquad
r\frac{\partial\theta}{\partial M}=
\frac{a\sqrt{1-e^2}}{1-e\cos E}.
\]
The maximum absolute radial coefficient is $ae/\sqrt{1-e^2}$, attained at $\cos E=e$, and the maximum
transverse coefficient is $a\sqrt{(1+e)/(1-e)}$, attained at periapsis. Multiplying these by the
secular rate $|\delta n|=\tfrac32 n|\delta a|/a$ gives the envelope rates
\[
R_R=\tfrac32 n|\delta a|\,\frac{e}{\sqrt{1-e^2}},\qquad
R_T=\tfrac32 n|\delta a|\,\sqrt{\frac{1+e}{1-e}},\qquad
R_R/R_T=\frac{e}{1+e}.
\]

\emph{Bounded and degenerate cases.} The transverse coefficient never vanishes, so a nonzero
$\delta a$ precludes bounded in-plane variation; if $\delta a=0$, every remaining first-order term is
periodic. Out-of-plane motion changes only the fixed orientation of the orbital plane at first order,
and its projection is a linear combination of the bounded periodic reference coordinates.
The circular case follows by setting $e=0$, or directly from its nonsingular Cartesian
variation \citep{clohessy1960terminal}.

For two exact bound ellipses, independently of the linearized calculation,
\[
\|\mathbf r_1(t)-\mathbf r_0(t)\|
\le a_1(1+e_1)+a_0(1+e_0)<\infty.
\]
Thus secular linearized phase error is not exact unbounded separation. The exact energy
change is $\Delta\mathcal E=\mathbf v\cdot\bm\delta+\|\bm\delta\|^2/2$.

\emph{Hyperbolic leading term.} For an escaping positive-energy conic,
$\dot r^2=2\mathcal E+2\mu/r-h_{\rm ang}^2/r^2$ on its outgoing branch, so
$r(t)/t\to\sqrt{2\mathcal E}>0$ and the vector acceleration is $O(t^{-2})$.
Its integrability gives an outgoing velocity $\dot{\mathbf r}(t)=\mathbf v_\infty+O(t^{-1})$, and a
further integration yields the leading position expansion
\[
\mathbf r(t)=\mathbf v_\infty t+O(\log t).
\]
Subtracting the two expansions, if $\Delta\mathbf v_\infty\ne0$ then
$\|\Delta\mathbf r(t)\|/t\to\|\Delta\mathbf v_\infty\|>0$.

\emph{Parabolic boundary.} For $p>0$, orthonormal periapsis and transverse directions
$\mathbf P,\mathbf Q$, and $D=\tan(\nu/2)$, the outgoing parabolic orbit satisfies
\[
\mathbf r=\tfrac p2(1-D^2)\mathbf P+pD\mathbf Q,\qquad
t-t_p=\frac{p^{3/2}}{2\sqrt\mu}(D+D^3/3).
\]
Writing $x=(6\sqrt\mu(t-t_p)/p^{3/2})^{1/3}$, inversion gives
$D=x-x^{-1}+O(x^{-3})$. Substitution, with fixed $t_p$, yields
\[
\mathbf r(t)=-(9\mu/2)^{1/3}t^{2/3}\mathbf P+
(6\sqrt\mu)^{1/3}\sqrt p\,t^{1/3}\mathbf Q+O(1).
\]
Subtract these expansions when both trajectories remain parabolic; leading coefficients can
cancel, so the rate depends on their directions and $p$. If the perturbed orbit becomes bound,
its position stays bounded while the unchanged parabolic reference has norm asymptotic to
$(9\mu/2)^{1/3}t^{2/3}$, proving the stated relative growth.
Finally, every bound or parabolic trajectory is $o(t)$, whereas a hyperbolic trajectory divided
by $t$ tends to its nonzero outgoing velocity. This proves the stated mixed-conic linear cases.
Rebinding one trajectory does not make the comparison bound--bound.
\end{proof}

\subsection{Stability, cadence, and memory}
\label{app:stability}

A disturbance can cease affecting future inputs while its accumulated displacement remains.
Likewise, a stationary input law can permit rare kicks large enough to violate a physical
tolerance. The conditions used for moment comparisons, forgetting of initialization, and
trajectory control therefore answer different questions. They must remain separate when richer
disturbance models are used to construct a reference signature.

\paragraph{Stability distinctions.} The covariance recursion describes the variability of new
velocity inputs, whereas a forcing recursion describes how preceding inputs affect later ones.
Their stability conditions serve these different roles \citep{andree2026stability}. For constant BEKK coefficients the spectral
condition $\rho(\Acal\otimes\Acal+\Bcal\otimes\Bcal)<1$ is a second-moment condition for
the innovation, with stationary integrable exogenous loading and the appropriate innovation
assumptions. It concerns $\E\operatorname{tr}H_t<\infty$, not automatically
$\E\|H_t\|^2<\infty$. Under the inert restriction $H_t=C+G(W_t)$, stationarity and
integrability still require those properties of $G(W_t)$.
A negative top Lyapunov exponent, together with the integrability hypotheses of a stochastic
recurrence theorem, can establish strict stationarity and forgetting without finite second
moments \citep{bougerol1992strict,straumann2006quasi}. It is not a replacement for a moment
condition. Likewise a constant block $\rho(\Mcal)<1$ gives geometric decay of differences
from initialization, while a stationary finite-variance driven solution also needs a stationary
finite-variance driver. Proposition~\ref{prop:o1} uses the explicit forecast-residual assumptions
directly, through Assumptions~\ref{ass:A1} and~\ref{ass:A3}(c).

Temporary amplification is compatible with contraction over a sequence of updates. For a scalar
linear recurrence, successive multipliers combine by multiplication: an expansion by a factor
of two followed by contraction by a factor of one quarter leaves half the initial difference.
The corresponding log-average condition concerns the product's long-run rate. Under the
stationarity, ergodicity, and logarithmic integrability conditions of a stochastic recurrence
theorem, contraction on average can yield a unique stationary solution and forgetting of
initialization even when some updates expand \citep{straumann2006quasi}. For matrix dynamics,
the accumulated matrix products or suitable Lipschitz bounds govern this conclusion; individual
spectral radii do not give the general criterion.

A planner also needs to know how large the deviation can become before later correction reduces
it. Even the scalar example doubles the initial difference before halving it. Bounding the
duration of an expanding episode does not by itself bound its amplification or the size of a
new disturbance. A hard tracking guarantee requires bounds on the intervening dynamics and
admissible inputs; with stochastic inputs, a specified finite-horizon exceedance probability
can instead be assessed. Remark~\ref{rem:o1phys} states these physical criteria. O1 remains a
property of the analyst's forecast residual, not a claim that every control update contracts or
that a long-run stability condition enforces the planner's tolerance.

Sampling also changes which physical actions are resolved. Several disturbances and corrections
can occur between two observations, leaving only their accumulated effect in the measured record.
A regular residual at that cadence need not describe the intermediate dynamics. Remark~\ref{rem:frequency}
keeps the statistical property attached to the cadence and transformation at which it is defined,
rather than interpreting it as a contraction law at every physical time scale.

\Needspace{5\baselineskip}
\begin{remark}[Sampling frequency and the operational form of contraction]
\label{rem:frequency}
Statistical properties belong to the stated diagnostic cadence and transformation.
Changing cadence can expose dynamics omitted by the coarser model and change finite-sample
memory estimates. A finite sum of stable short-memory linear components can resemble slow
decay over a finite range, but neither an increased memory estimate nor its cause follows
automatically \citep{granger1980aggregation,shimotsu2005whittle}.

A negative top Lyapunov exponent with the required recurrence and integrability hypotheses
can establish forgetting of initial conditions \citep{bougerol1992strict}; a narrow observed
envelope does not establish that exponent. Covariance stationarity, strict stationarity,
forgetting, and the physical tolerance of Remark~\ref{rem:o1phys} remain distinct.
The population projection is not a per-step dynamic contraction law.
\end{remark}

Slowly decaying motion can reflect persistent forcing or gradual adjustment by a controller.
A memory estimate by itself does not choose between those causes. To add it to a classification
signature, the candidate forcing and steering models must imply different observable persistence
after the same observer and sampling procedure are applied. Remark~\ref{rem:memory} identifies
the additional restrictions needed before that extension can supply class separation.

\Needspace{5\baselineskip}
\begin{remark}[Possible memory extensions]
\label{rem:memory}
A reproducible slow component across matched windows could motivate richer uncorrected and steering
reference models. Assumptions~\ref{ass:A1}--\ref{ass:A2} allow stationary long-memory residuals
and persistent environments, while Assumption~\ref{ass:A3} leaves long-horizon steering
unspecified. A further diagnostic would therefore require restrictions on drivers, observer,
steering, cadence, and estimation window, followed by its own separation argument.
Finite near-canceling roots can imitate slow decay in finite samples; fractional models must
respect their stationary range. Estimated memory alone supplies no class exclusion here.
\end{remark}

\subsection{Forcing composition and temporal extensions}

Cometary volatile release motivates an outgassing contribution \citep{mumma2011composition}.
The fitted Marsden law describes its smooth orbit-scale mean recoil, while the pulsed channel
represents departures from that mean. The diagnostic model treats these departures as discrete
velocity inputs. Event amplification and fourth-moment properties are imposed where a result
uses them. Body-generated forcing also includes smooth secular effects: Yarkovsky-type thermal-recoil acceleration, routinely detected astrometrically on sub-kilometer
bodies \citep{chesley2003yarkovsky,farnocchia2013yarkovsky,vokrouhlicky2015yarkovsky}, and the spin-averaged
part of shape-modulated radiation pressure. This component is smooth and slowly varying at the diagnostic
scale, so the analysis groups it with $\fext_t$, and $\fbody_t$ denotes the impulsive body component throughout. Spin modulation resolved at the diagnostic scale requires a model beyond this
smooth-mean approximation and is considered with the other alternative mechanisms
(Appendix~\ref{app:empirical}). In the residual identity, $\mathbf s_t$, $\mathbf o_t$,
$\fext_t$, and $\fbody_t$ always denote length-valued propagated contributions.
The symbols $\bm\varepsilon_t$, $\bm\eta_t$, $\bm\zeta_t$, and $\bm\nu_t$ denote
velocity-increment innovations or channels; they are not identical to those displacements.

At coarser cadence the input $\mathbf w_t$ may aggregate lagged innovations; it is generally
not itself a one-step innovation. The BEKK-type covariance recursion
\citep{engle1982arch,bollerslev1986garch,engle1995bekk} applies to $\bm\eta_t$.
The spectral second-moment condition, strict-stationarity conditions, and any fourth-moment
conditions require separate checks
\citep{bougerol1992strict,straumann2006quasi}.

A forcing episode or a corrective response need not finish within one sampling interval.
Its remaining effect can depend on recent inputs, including inputs in other directions. A
finite-lag linear recursion represents this dependence by adding a new disturbance to the
remaining contributions of previous steps. Write $\mathbf f_t$ for deviations of velocity-input
components from their means, with
$\mathbf f_t=\mathbf A_1\mathbf f_{t-1}+\cdots+\mathbf A_p\mathbf f_{t-p}+\mathbf u_t$,
where $p$ is a positive integer and the driver is stationary with finite variance. The coefficient
matrices describe how earlier components contribute to the present input; their off-diagonal
entries allow directional coupling. Stacking the recent inputs expresses the same recursion
through the companion matrix
\[
\Mcal=\begin{pmatrix}\mathbf{A}_1&\cdots&\mathbf{A}_{p-1}&\mathbf{A}_p\\
\mathbf{I}&&&\mathbf{0}\\
&\ddots&&\vdots\\
&&\mathbf{I}&\mathbf{0}\end{pmatrix},
\]
with $\rho(\Mcal)<1$ giving a convergent stationary linear solution under these driver
assumptions \citep{hamilton1994,lutkepohl2005}. The condition makes the contribution of the
initial input history decay; fresh disturbances continue to drive the process. It concerns
persistence of the inputs, not return of accumulated displacement. For identification, such a
recursion supplies a richer forcing or correction history that must still be propagated through
position dynamics and the observer before its signature is compared.

A longer-horizon correction policy may act on deviations of velocity components from a
specified relationship, rather than merely rescale each new disturbance. A velocity-error-correction
model represents this possibility: selected combinations of residual velocities measure the
imbalance, and a response matrix determines how that imbalance changes later velocity. Separating
this response from adjustment to the preceding velocity change gives the specification
\[
\Delta\dot{\mathbf e}_t=
\alpha\bm\beta'\dot{\mathbf e}_{t-1}
+\Gamma\Delta\dot{\mathbf e}_{t-1}+\bm\varepsilon_t.
\]
Here $\bm\beta'\dot{\mathbf e}_{t-1}$ measures the specified velocity imbalance,
$\alpha$ maps it into a corrective change, and $\Gamma$ represents dependence on the preceding
change. For that response to be restoring, its signs and dynamics must satisfy the rank, root,
and driver conditions of a specified cointegrated model
\citep{engle1987cointegration,johansen1991vecm}. Adjustment of a velocity relationship also
leaves the target displacement unspecified: the body may settle into motion about a shifted
path rather than return to the old one. The local free-flight input model sets both adjustment
terms to zero; setting $\alpha=0$ alone retains the dynamics governed by $\Gamma$.
Integration orders likewise depend on the driver and roots. An identification extension using
this policy would compare its propagated velocity and displacement responses with those of
an explicitly matched uncorrected model.

A finite stable linear system driven by short-memory input can yield a short-memory scalar
marginal, with near-canceling roots resembling slower decay in finite samples.
Long memory can result from aggregation over a continuum of suitable adjustment speeds
\citep{granger1980aggregation} or from long-memory input. A finite number of actuators alone
implies neither mechanism. Using memory for regime identification requires separate restrictions
of the kind stated in Remark~\ref{rem:memory}.

\subsection{Benchmark portability and object coupling}
\label{app:portability}

Two uncorrected bodies need not fluctuate equally under a common environment. Different coupling,
for example through area-to-mass ratio, can change the amplitude of their response. A quieter
candidate therefore need not be corrected, and a more variable reference need not be its valid
twin. Cross-object comparison requires a way to account for those physical differences as well
as the observation process.

One tractable case assumes that coupling rescales the entire environmental variance profile by
a constant positive factor. An event-to-quiet ratio then compares each body's relative response
rather than its absolute amplitude. Suppose $V_t=c\,\gamma(W_t)$ with $c>0$ and the same
event/quiet laws of $\gamma$.
Write $V_e=\E[V_t\mid\mathrm{event}]>V_q=\E[V_t\mid\mathrm{quiet}]>0$.
Then $V_e/V_q$ cancels $c$. In (S), set $a=(1-\lambda)^2$ and
$b=\lambda^2\operatorname{tr}F_\nu\ge0$.
For quiet gain zero, event gain in $[0,1)$, and $aV_q+b>0$,
\[
\frac{\E[\operatorname{tr}H_t\mid\mathrm{event}]}{aV_q+b}
\le\frac{aV_e+b}{aV_q+b}\le\frac{V_e}{V_q}.
\]
The first inequality follows by dropping event gain attenuation, and is strict exactly when
the removed event variance has positive expectation. The second follows on cross-multiplication
from $b(V_e-V_q)\ge0$ and is strict when $b>0$.
With the same constant gain in both windows, that gain cancels from the ratio.
In particular, when $b=0$ the ratio equals the uncorrected ratio despite attenuation of levels.
Thus ratio strictness is not identical to increment strictness.

For the general additive inert covariance $C+G(W_t)$, this portability argument applies when
the object-specific baseline and variable parts share a common multiplicative factor.
Computing the ratio requires positive quiet variance; applying it to measurements also requires
matching the observation operators and measurement floors.

\clearpage
\section{Empirical implementation considerations}
\label{app:empirical}

A smaller observed variance can reflect rejected disturbances, a quieter environment, weaker
physical coupling, or a change in how the trajectory was measured. Applying
Algorithm~\ref{alg:classification} requires the physical contrast to remain identifiable after
these alternatives are admitted to the observation model. Table~\ref{tab:mimics} records
examples and discriminating checks; the list is not exhaustive. The estimation and calibration
requirements below concern how each comparison is recovered from an actual trajectory record.

\begingroup
\small\singlespacing
\setlength{\tabcolsep}{4pt}
\setlength{\LTpre}{6pt}
\setlength{\LTpost}{6pt}
\begin{longtable}{@{}>{\raggedright\arraybackslash}p{.22\linewidth}>{\raggedright\arraybackslash}p{.18\linewidth}>{\raggedright\arraybackslash}p{.18\linewidth}>{\raggedright\arraybackslash}p{\dimexpr.42\linewidth-6\tabcolsep\relax}@{}}
\caption{\textbf{Alternative mechanisms and observational effects.} Processes that can resemble stabilized signatures, with checks for distinguishing them.}\label{tab:mimics}\\
\toprule
Mechanism & Relation to maintained assumptions & Possible effect & Discriminating check \\
\midrule
\endfirsthead
\multicolumn{4}{@{}l}{\textbf{Table~\thetable{} (continued).}}\\
\toprule
Mechanism & Relation to maintained assumptions & Possible effect & Discriminating check \\
\midrule
\endhead
\midrule
\multicolumn{4}{r@{}}{\footnotesize Continued on the next page.}\\
\endfoot
\bottomrule
\endlastfoot
Spin damping with spin-dependent recoil & may breach A2 through spin--recoil coupling & lower variance if recoil fluctuations decline & model spin, recoil, and observation jointly; compare the predicted response and damping timescale \citep{burns1973nutation,pravec2005tumbling,vokrouhlicky2015yarkovsky} \\
Post-outburst activity cessation & regime (C), not (I) & $\hat v_h<0$ & coincident shift in the level channel / fitted non-gravitational parameters \\
Post-event follow-up upgrades (denser, better astrometry) & observing process & $\hat v_h<0$ & recompute on a homogeneous observing subset spanning $t_0$; report observing metadata \\
Outlier rejection and robust weighting in the reduction & observing process & $\Delta\hat H$ and $\Delta\hat\kappa$ attenuated & rerun on a fully weighted fit without outlier rejection \\
Measurement-share shifts across windows & observing process & changes in variance or pooled shape & forward-model measurement and filtering jointly; compare scalar moments under the fitted observation law \\
Reference-fit absorption: whole-arc non-gravitational parameters absorb event-elevated forcing & estimator & $\Delta\hat H$ attenuated & hold the event window out of the reference fit; simulate the projection null on an uncorrected body \\
Quieting environment over the response window & breaches fixed-covariance comparison & potentially negative $\hat v_h$ & model covariance before recovery scaling and after observation, using the appropriate law at each stage \\
Own-shock asymmetric feedback & breaches A2 & potentially negative $\hat v_h$ & fit a model allowing shocks to lower variance and determine its sign under the fitted parameters \\
Many-jet aggregation & possible regime (C) & lower excess kurtosis can accompany higher variance & evaluate component cumulants and variance shares; amplitude increases alone do not fix the kurtosis increment \\
Photocenter displacement: a sub-threshold coma shifts the fitted position, co-moving with $W_t$ & observing process (bias, not variance) & event-window bias in $\mathbf{e}_t$ imitating variance attenuation or excess-kurtosis reduction & wavelength/aperture dependence of the fitted position; stacked-profile asymmetry; deweight per cometary practice \citep{veres2017statistical} \\
Execution noise on the recovery channel & loop-side law beyond A3's scaling alone & independent zero-excess noise can reduce positive excess kurtosis & use a validated execution-noise model; scaling with command magnitude is an additional premise \\
\end{longtable}
\footnotesize
\noindent\textit{Notes.} A1--A3 refer to Assumptions~\ref{ass:A1}--\ref{ass:A3}.
The effects listed are possible effects, not universal sign laws. The checks can constrain a
specified alternative but are not an exhaustive identification procedure. Reduction and reference
fitting must be simulated as actually used; neither attenuates every statistic in every data set.
\par
\endgroup

\paragraph{Covariance model and environment index.}
The inert benchmark must distinguish changes associated with the external environment from
feedback following the object's own shocks. The uncorrected covariance recursion~\eqref{eq:hrec}
can be fitted by quasi-maximum likelihood
under an identified specification and suitable regularity conditions
\citep{engle1995bekk,straumann2006quasi}. Testing $\Acal=\Bcal=0$ is
nonregular in general. The first derivative in quadratic BEKK coordinates vanishes there;
reparameterizing into covariance coefficients does not by itself supply identification, a
nondegenerate information matrix, or a particular tangent cone. Covariance-persistence nuisance
parameters can also be unidentified under a homoskedastic null.
Consequently neither an ordinary Lagrange multiplier reference distribution \citep{engle1982arch} nor a generic
chi-bar-squared law or conservativeness claim follows from the boundary alone
\citep{andrews2001boundary}. A specific test needs its own null theory or validated simulation.

Constructing $W_t$ from ephemeris and observing metadata can reduce direct outcome feedback.
It does not prove causal exogeneity, independence from reference-estimation errors, or that
the environment is fixed in an intervention comparison.

\paragraph{Benchmark construction.}
A quiet window is meant to represent weaker forcing, rather than observations selected because
the realized trajectory happens to be quiet. Select such windows by a pre-specified environmental
rule and model their
relationship to the same-history counterfactual. Selection on low realized variance can lower a
baseline and push an estimated difference upward. A whole-sample baseline mixes event and quiet
conditions; the resulting bias depends on their levels and weights and has no universal negative sign.
With common event and quiet environmental laws, a constant multiplicative difference in object
coupling cancels from the event-to-quiet variance ratio. Event-dependent correction or an additive
sensing floor can lower that ratio, whereas a common constant gain cancels from it.
Appendix~\ref{app:portability} derives this portability condition; it requires the baseline and
environmental variation to share the same multiplicative coupling.

\paragraph{Event timing.}
The impulse response compares histories that differ by an identified disturbance, so inference
must distinguish that intervention from fluctuations that happen to trigger a detector. A single
apparition may contain only one effectively independent event. Its null response
distribution then requires strong time-series or model assumptions. A pre-specified aggregate
statistic or simultaneous band avoids treating pointwise bands as joint coverage, but must itself
be calibrated. Null-imposed residual resampling is one possible method, not a validity theorem.

An i.i.d.\ residual bootstrap requires conditions making the empirical residual law and the target
functional consistent; mere exchangeability or conditional standardization is insufficient.
Serial dependence may require a justified block, dependent-multiplier, or model-based procedure.
A wild bootstrap result for a unit-root statistic \citep{cavaliere2008bootstrap} does not automatically
validate a conditional-variance response statistic. Fourth-moment and dependence conditions matter.
Subsampling or $m$-out-of-$n$ methods require their own rate and tuning arguments and are not
universally conservative defaults.

An observed event time selected by a displacement detector is not automatically an exogenous
intervention, even when $W_t$ is flat. A detector may lag the start of a burst and select its quieting
aftermath; sensitivity to timing and the baseline must therefore be assessed.
If selection is part of the statistic, simulate that selection inside the null procedure.
Reapplying a detector is necessary to represent that selection mechanism in such a simulation,
but is not sufficient for general bootstrap validity. Appendix~\ref{app:mc} reports performance
for particular detector and forcing laws.

\paragraph{Variance-response inference.}
The diagnostics may be applied in measured coordinates $\mathbf y_t$ or to a specified
transformation $u_t=\mathcal L(\mathbf y)_t$. Its output requires its own statistical assumptions.
The notation $L(B)$ is reserved for linear filters; moving-median residuals are nonlinear.

The physical inputs are integrated into position and then filtered by the instrument and observer.
Environment adjustment must therefore use the covariance of the quantity actually observed.
Dividing by the inert input covariance $g(W_t)$ does not generally fix the stabilized covariance
before recovery scaling, which also contains an additive floor, or the covariance after state propagation and filtering.
The gradient experiments illustrate a specified scalar covariance model, including an estimated
version within its assumed family.

For a fixed scalar projection $\ell$, conditional shape uses
\[
z_{\ell,t}=
\frac{\ell'\bm\varepsilon_t-\E[\ell'\bm\varepsilon_t\mid\Fcal_{t-1}]}
{\sqrt{\ell'H_t\ell}},
\qquad \ell'H_t\ell>0.
\]
An observed forecast residual needs its own conditional mean and covariance in this formula.
Vector whitening uses $H_t^{-1/2}\bm\varepsilon_t$ on a positive-definite subspace; the scalar
kurtosis identity does not extend to its norm. Conditional standardization does not establish
independence. If conditional means are zero and variances one, a pooled fourth moment is the
average conditional fourth moment when integrable; the conditional shapes can still vary.
Window-constant scaling has no universal direction of bias when two scale mixtures are compared.
A Bonferroni familywise bound requires valid component tests and allocation across the actual
tested family.

\paragraph{Measurement floor.}
Correction acts on the body, while measurement noise is added through the observing process.
A fixed additive noise floor can reduce sensitivity without changing the population difference
between two responses. To isolate this distinction in a direct observation channel, let
$\mathbf u_t^{\mathrm{obs}}=D\bm\varepsilon_t+\mathbf m_t$, with predictable $D$ and centered measurement
noise with conditional covariance $R_t$, conditionally uncorrelated with the input. Then
\[
H_t^{\mathrm{obs}}=D(1-K_t)^2\Sigma_tD'+R_t.
\]
With $D$, $R$, and $\Sigma$ fixed across the same-history comparison, the trace response is
$-K(h)(2-K(h))\operatorname{tr}(D\Sigma D')$.
The identity with $D=I$ applies only to a direct measurement in the input coordinates.
Actual position or angular astrometry requires propagation through the dynamic and observation
models. Signal-to-noise ratio affects power together with sample size, cadence, dependence, and
parameter uncertainty; physical innovation variance below per-epoch measurement variance is not an absolute
identifiability threshold.

Reference fitting and rejection rules can remove or alter components of the signal.
Their effect depends on the fitted subspace, sampling, and selection rules.
A held-out event window and a modeled error law
can help assess this, but the actual reduction must be included in simulation.
Subtracting an estimated measurement covariance does not automatically produce an unbiased
or positive-definite physical covariance.

\paragraph{Observable subspace.}
Optical astrometry supplies two sky-plane angles per epoch \citep{carpino2003errors,veres2017statistical}. Convert between angles and
physical transverse displacement through a declared geometry and range model.
The line-of-sight component is model-inferred unless additional measurements constrain it.
Trace statistics must use the measured subspace or explicitly account for uncertainty in inferred
components. Radar range and range-rate measurements add information through different measurement maps
\citep{porcelli2022radar}; observability still depends on the viewing geometry. A correction can
be weak or invisible in the measured directions. Corollary~\ref{cor:joint}(iii) therefore uses
separation of observation-level signatures: the instrument must retain the difference between
mechanisms, and calibration must distinguish it from measurement and selection effects.

\clearpage
\section{Finite-sample behavior under the illustrative model}
\label{app:mc}

The population contrasts separate specified physical laws, but a finite trajectory may resolve
those differences poorly. The Monte Carlo experiments examine what happens when the environment
changes, an event must be located from the record, occasional large disturbances affect estimated
moments, or measurement noise obscures a variance response. The kurtosis experiment uses a
Student-$t$ disturbance; measurement-noise experiments add Gaussian error. Correction follows
the linear mechanisms of Figure~\ref{fig:regimes} under the separate forcing and noise laws
specified here. Rejection frequencies describe performance under these illustrative laws;
an observing program requires its own calibration and resampling justification
(Appendix~\ref{app:empirical}).

The within-object variance-response statistic is the mean squared innovation over the
$H=\mcHorizon$ post-onset steps, divided by the estimated quiet variance, minus one.
The restricted bootstrap standardizes innovations under the null second-moment law and resamples
them. Each resample recomputes the statistic; the full-pipeline variant also repeats event-time
selection. The resulting distribution supplies a one-sided critical value. First-order validity
requires reproduction of the null's conditional second-moment structure under the dependence
and fourth-moment conditions discussed in Appendix~\ref{app:empirical}.

Table~\ref{tab:mc} reports rejection frequencies and shape estimates for these experiments.
Under the flat inert null, the restricted-bootstrap rejection frequencies are
\mcSizeShort{} at $T=\mcTshort$, \mcSizeMed{} at $T=\mcTmed$, and
\mcSizeLong{} at $T=\mcTlong$.

\begingroup
\small\singlespacing
\setlength{\tabcolsep}{4pt}
\setlength{\LTpre}{6pt}
\setlength{\LTpost}{6pt}
\begin{longtable}{@{}>{\raggedright\arraybackslash}p{\dimexpr.61\linewidth-6\tabcolsep\relax}*{3}{>{\centering\arraybackslash}p{.13\linewidth}}@{}}
	\caption{\textbf{Monte Carlo behavior of the diagnostics.} Rejection frequencies and shape estimates under the specified illustrative models.}\label{tab:mc}\\
	\toprule
	Quantity (nominal \mcAlpha\% where applicable) & $T=\mcTshort$ & $T=\mcTmed$ & $T=\mcTlong$ \\
	\midrule
	\endfirsthead
	\multicolumn{4}{@{}l}{\textbf{Table~\thetable{} (continued).}}\\
	\toprule
	Quantity (nominal \mcAlpha\% where applicable) & $T=\mcTshort$ & $T=\mcTmed$ & $T=\mcTlong$ \\
	\midrule
	\endhead
	\midrule
	\multicolumn{4}{r@{}}{\footnotesize Continued on the next page.}\\
	\endfoot
	\bottomrule
	\endlastfoot
	Size of restricted bootstrap $v_h$ test under (I), flat environment, known $t_0$ & \mcSizeShort & \mcSizeMed & \mcSizeLong \\
	Size under (I), $t_0$ located by an innovations mean-shift detector, event time treated as fixed during resampling & \mcDetNaiveShort & \mcDetNaiveMed & \mcDetNaiveLong \\
	Same detected $t_0$, detector re-run inside each bootstrap resample (full pipeline) & \mcDetFullShort & \mcDetFullMed & \mcDetFullLong \\
	Size under (I) with a $\mcGrad\times$ declining $g(W_t)$ across the arc, naive test & \mcGradNaiveShort & \mcGradNaiveMed & \mcGradNaiveLong \\
	Same gradient null, $g(W_t)$-standardized innovations & \mcGradStdShort & \mcGradStdMed & \mcGradStdLong \\
	Same gradient null, standardized by an \emph{estimated} $\hat g(W_t)$ (feasible) & \mcGradFitShort & \mcGradFitMed & \mcGradFitLong \\
	Power of $v_h$ test under (S), $K_{\max}=\mcKhi$ & \mcPowHiShort & \mcPowHiMed & \mcPowHiLong \\
	Power of $v_h$ test under (S), $K_{\max}=\mcKlo$ & \mcPowLoShort & \mcPowLoMed & \mcPowLoLong \\
	Power at $K_{\max}=\mcKhi$ with measurement noise, $\operatorname{tr}R/\operatorname{tr}\Sigma=\mcSnrRatioLo$ & \mcPowSnrOneShort & \mcPowSnrOneMed & \mcPowSnrOneLong \\
	Same, $\operatorname{tr}R/\operatorname{tr}\Sigma=\mcSnrRatioHi$ & \mcPowSnrThreeShort & \mcPowSnrThreeMed & \mcPowSnrThreeLong \\
	Cross-sectional variance-\emph{level} test size under (I) vs a validated ballistic cohort (\mcXsecN{} refs) & \mcXsecSizeShort & \mcXsecSizeMed & \mcXsecSizeLong \\
	Cross-sectional power, constant gain $K=\mcKhi$, $\operatorname{tr}R/\operatorname{tr}\Sigma=\mcSnrRatioLo$ & \mcXsecPowHiOneShort & \mcXsecPowHiOneMed & \mcXsecPowHiOneLong \\
	Same, $\operatorname{tr}R/\operatorname{tr}\Sigma=\mcSnrRatioHi$ & \mcXsecPowHiThreeShort & \mcXsecPowHiThreeMed & \mcXsecPowHiThreeLong \\
	Cross-sectional power, constant gain $K=\mcKlo$, $\operatorname{tr}R/\operatorname{tr}\Sigma=\mcSnrRatioLo$ & \mcXsecPowLoOneShort & \mcXsecPowLoOneMed & \mcXsecPowLoOneLong \\
	Same, $\operatorname{tr}R/\operatorname{tr}\Sigma=\mcSnrRatioHi$ & \mcXsecPowLoThreeShort & \mcXsecPowLoThreeMed & \mcXsecPowLoThreeLong \\
	DF rejection rate with additive outliers (illustrative test distortion) & \mcLeakShort & \mcLeakMed & \mcLeakLong \\
	Same experiment, with an additive-outlier removal pre-step & \mcLeakRobShort & \mcLeakRobMed & \mcLeakRobLong \\
	DF rejection rate, clean random walk (size check) & \mcCleanShort & \mcCleanMed & \mcCleanLong \\
	Displacement level-shift test: false-positive rate under (I) & \mcDispSizeShort & \mcDispSizeMed & \mcDispSizeLong \\
	$\hat\kappa^{(S)}-\hat\kappa^{\mathrm{twin}}$, conditionally standardized, $\lambda=\mcLambda$ (O3 excess-kurtosis reduction; MC s.e.) & $\mcKurtShort$ (\mcKurtSEShort) & $\mcKurtMed$ (\mcKurtSEMed) & $\mcKurtLong$ (\mcKurtSELong) \\
	Fraction of arcs with an estimated positive kurtosis gap, $\lambda=\mcLambda$ & \mcKurtPosShort & \mcKurtPosMed & \mcKurtPosLong \\
	$\hat\kappa^{(S)}-\hat\kappa^{\mathrm{twin}}$ at the lower share $\lambda=\mcLambdaLo$ & $\mcKurtLamShort$ & $\mcKurtLamMed$ & $\mcKurtLamLong$ \\
	Pooled-gain gap $\hat\kappa((1-K_t)r_t)-\hat\kappa(r_t)$, same mixed input $r_t$ & $\mcWCShort$ & $\mcWCMed$ & $\mcWCLong$ \\
\end{longtable}
\footnotesize
\noindent\textit{Notes.} \mcNrep{} fixed-seed replications per cell. The one-sided
test imposes the null during resampling and compares the mean squared innovation over
$H=\mcHorizon$ post-onset steps with the estimated quiet variance; detected-$t_0$ rows use a cumulative-sum (CUSUM) mean-shift detector. Gradient rows decline by a factor
\mcGrad{} and use either the true $g(W_t)$ or an $\hat g(W_t)$ fitted from the arc.
The displacement row flags a standardized absolute CUSUM statistic above the upper
$1-\mcAlpha/100$ bootstrap quantile, reselecting the event time in each whole-series resample.
The outlier rows add \mcOutliers{} outliers and use a demeaned Dickey--Fuller statistic.
Negative $\hat\kappa$ gaps denote estimated excess-kurtosis reduction; parentheses report Monte Carlo standard errors. The event disturbance is
standardized Student-$t$ with \mcTailDf{} degrees of freedom, with
$\sigma_\nu/\sigma_\zeta=\mcSdNu$ and default $\lambda=\mcLambda$. Measurement-noise rows add Gaussian error at
the stated variance ratio. The cross-sectional rows compare a candidate's log sample variance against a cohort of
\mcXsecN{} validated ballistic references by a one-sided \mcAlpha\% prediction bound; the size row draws the
candidate from the ballistic law, the power rows from a constant-gain body of observed level
$(1-K)^2+\operatorname{tr}R/\operatorname{tr}\Sigma$.
\par
\endgroup

Power at $K_{\max}=\mcKhi$ runs from \mcPowHiShort{} to \mcPowHiLong{};
the statistic keeps a fixed $H=\mcHorizon$-step response horizon as the total arc grows. Because the short-arc size sits above nominal
(\mcSizeShort{} at $T=\mcTshort$), short-arc power is reported raw rather than size-adjusted and
must be read alongside that size; the longer-arc sizes are near nominal.

The paired detected-$t_0$ rows compare resampling with the detected time held fixed
against resampling that repeats the detector. The gradient rows impose a declining covariance law: their naive rejection
frequencies span \mcGradNaiveRange{}, while adjustment using the stipulated $g(W_t)$ brings
them near the flat-null frequencies.

The additive-outlier experiment illustrates distortion of a Dickey--Fuller statistic:
rejection frequencies for the contaminated random walk are \mcLeakShort{}, \mcLeakMed{},
and \mcLeakLong{} at the short, medium, and long arcs. The corresponding clean-model
frequencies are \mcCleanShort{}, \mcCleanMed{}, and \mcCleanLong{}.
These frequencies concern the specified random-walk and outlier models.

For the specified kurtosis experiment, the measured stabilized-minus-twin gaps are reported with
Monte Carlo standard errors. The single-arc positive-gap frequencies are \mcKurtPosShort{}
at the shortest arc and \mcKurtPosLong{} at the longest.
The pooled-gain row compares $\hat\kappa((1-K_t)r_t)$ with $\hat\kappa(r_t)$ for the same
mixed input before recovery scaling $r_t=(1-\lambda)\zeta_t+\lambda\nu_t$. It isolates gain-induced
scale mixing with a common input shape. Proposition~\ref{prop:o3inv}(i) compares such a
scale mixture with a constant-scale law; it does not order two arbitrary mixtures.

The measurement-noise rows use the direct additive observation channel specified in
Appendix~\ref{app:empirical}. Added Gaussian noise reduces power under that model, bringing
it toward size for the larger floor. This illustrates a loss of sensitivity at the stated
sample sizes; it does not create an absolute per-epoch variance threshold for all observations.

Constant recovery gain gives $v^{S}_h\equiv0$ when applied in both continuations
(Proposition~\ref{prop:o4}(ii)), so this case is evaluated cross-sectionally. In the simulation,
$K>0$ holds the innovation covariance at $(1-K)^2\Sigma$, below the common ballistic
baseline $\Sigma$. The test flags a candidate whose log sample variance falls below the
one-sided \mcAlpha\% prediction bound from \mcXsecN{} validated ballistic references.
Its null rejection frequencies are \mcXsecSizeShort{}, \mcXsecSizeMed{}, and
\mcXsecSizeLong{} across the three arc lengths.

At the moderate measurement floor $\operatorname{tr}R/\operatorname{tr}\Sigma=\mcSnrRatioLo$,
power ranges from \mcXsecPowLoOneShort{} to \mcXsecPowLoOneLong{} at $K=\mcKlo$,
and from \mcXsecPowHiOneShort{} to \mcXsecPowHiOneLong{} at $K=\mcKhi$.
At the higher floor $\operatorname{tr}R/\operatorname{tr}\Sigma=\mcSnrRatioHi$, power
increases with arc length from \mcXsecPowHiThreeShort{} to \mcXsecPowHiThreeLong{} at
$K=\mcKhi$, and from \mcXsecPowLoThreeShort{} to \mcXsecPowLoThreeLong{} at $K=\mcKlo$.
The comparison imposes a common baseline $\Sigma$ for candidate and reference cohort.
Its empirical size and power therefore characterize that matched simulation.
A field cohort needs a separate argument for common coupling, environment and observation
laws; resolving a level difference alone does not identify a constant-gain controller.

\subsection{Sensitivity of forecast-residual regularity tests}

A KPSS-type statistic tests a stationarity null under its own dependence and moment conditions
\citep{kwiatkowski1992kpss}; a Dickey--Fuller statistic tests a scalar $+1$ unit root.
Neither non-rejection establishes a physical bound, and neither directly tests the entire
unit-modulus boundary of a multivariate lag-one projection.
Efficient variants and lag selection \citep{elliott1996ers,ng2001lag} can improve performance
in their specified models, without universal finite-sample dominance.

Additive outliers can distort unit-root rejection \citep{franses1994outliers}.
The removal experiment reduces the rejection frequencies to \mcLeakRobShort{},
\mcLeakRobMed{}, and \mcLeakRobLong{} across the three arc lengths, still above the
nominal \mcAlpha\%.
That preprocessing does not eliminate all stationary pulse-driven alternatives.
Wild-bootstrap unit-root methods can address particular changing-variance models
\citep{cavaliere2008bootstrap}; their assumptions and target statistic must be checked rather
than transferred to every variance-response test. Cadence and memory sensitivity likewise
require the qualifications in Remark~\ref{rem:frequency}.

\bibliographystyle{unsrtnat}
\bibliography{inert_refs}

\end{document}